\newtheorem{theorem}{Theorem}[section]
\newtheorem{lemma}[theorem]{Lemma}
\newtheorem{proposition}[theorem]{Proposition}
\newtheorem{corollary}[theorem]{Corollary}
\theoremstyle{definition}
\newtheorem{definition}[theorem]{Definition}
\newtheorem{remark}[theorem]{Remark}
\numberwithin{equation}{section}
\newcommand{\blankbox}[2]
\begin{document}
\title[Limiting weak-type behaviors]
{Limiting weak-type behaviors of some integral operators}
\author{WEICHAO GUO}
\address{School of Mathematics and Information Sciences, Guangzhou University, Guangzhou, 510006, P.R.China}
\email{weichaoguomath@gmail.com}
\author{JIANXUN HE}
\address{School of Mathematics and Information Sciences, Guangzhou University, Guangzhou, 510006, P.R.China}
\email{hejianxun@gzhu.edu.cn}
\author{HUOXIONG WU$^*$}
\address{School of Mathematical Sciences, Xiamen University,
Xiamen, 361005, P.R. China} \email{huoxwu@xmu.edu.cn}

\begin{abstract}
In this paper, we explore the limiting weak-type behaviors of some integral operators including maximal operators, singular and fractional integral operators and maximal truncated singular integrals et al. Some optimal limiting weak-type behaviors are given, which essentially improve and extend the previous results in this topics.
\end{abstract}
\subjclass[2010]{42B20; 42B25.}
\keywords{Limiting behaviors, weak type estimates, maximal operators, singular integrals, fractional integrals.}
\thanks{$^*$Corresponding author.}
\thanks{Supported by the NSF of China (Nos.11771358, 11471041, 11701112, 11671414), the NSF of Fujian Province of China (No.2015J01025)
and the China postdoctoral Science Foundation (No. 2017M612628).}
\maketitle

\section{Introduction}
Let $V$ be a probability measure,
and $V_t$ be the dilation of $V$, defined by
\begin{equation}
  V_t(E): =V(\frac{E}{t}).
\end{equation}
As $t$ tends to zero, $V_t$ concentrates its mass at the origin.
Consider the convolution of $V_t$ and a function $\phi$, namely,
\begin{equation}
  \phi\ast V_t(x):= \int_{\mathbb{R}^n}\phi(x-y)dV_t(y).
\end{equation}
Note that this convolution can be interpreted as a weighted averages of $\phi$.
Since the concentration of $V_t$, hence in the above integral, the value $g(x)$ is assigned the full mass as $t\rightarrow 0$.
More precisely, for example if $\phi\in L^p(\mathbb{R}^n)$ and $V$ is an absolutely continuous measure (respect to Lebesgue measure),
the classical property of \textbf{approximation to the identity} shows that
\begin{equation}
  \phi\ast V_t \rightarrow \phi
\end{equation}
in the sense of $L^p$.

For the linear operator $T_{\phi}(V): =\phi\ast V$, the above argument shows that if $\phi\in L^p$,
then $\phi$ can be approached by $T_{\phi}(V_t)$ as $t\rightarrow 0^+$ in the sense of $L^p$.
However, for the nonlinear case, for instance the corresponding maximal operator defined by
\begin{equation*}
  \mathcal {M}^{\alpha}_{\phi}(\mu)(x):=\sup_{r>0}(\phi^{\alpha}_r\ast \mu)(x)=\sup_{r>0}\frac{1}{r^{n-\alpha}}\int_{\mathbb{R}^n}\phi(\frac{x-y}{r})d\mu(y),
\end{equation*}
where $\phi_r^{\alpha}(x)=\frac{1}{r^{n-\alpha}}\phi(\frac{x}{r})$,
the limiting behavior become confused as $t\rightarrow 0^+$, since it can no longer be concluded or implied
by the classical theory of \textbf{approximation to the identity}.
If $\phi=\chi_{B(0,1)}$, we write $\mathcal {M}^{\alpha}: =\mathcal {M}^{\alpha}_{\phi}$ for short.
For $\alpha=0$, we write $\mathcal {M}_{\phi}:=\mathcal {M}_{\phi}^{\alpha}$.
We also use $\mathcal {M}:= \mathcal {M}_{\phi}^{\alpha}$ for $\alpha=0$ and $\phi=\chi_{B(0,1)}$, which is the classical Hardy-Littlewood maximal operator.

For the special case $\alpha=0$ and $\phi=\chi_{B(0,1)}$,
the following theorem was firstly obtained by Janakiraman\cite{Jan_Trans_2006}.

\medskip

\textbf{Theorem A}(\cite[Theorem 3.1]{Jan_Trans_2006}).
Let $V$ be a positive measure with finite variation. Then for any fixed $\lambda>0$,
\begin{equation*}
  \lim_{t\rightarrow 0^+}|\{x\in \mathbb{R}^n: \mathcal {M}V_t(x)>\lambda\}|
  =\Big|\Big\{x\in \mathbb{R}^n: \frac{V(\mathbb{R}^n)}{|x|^n}>\lambda\Big\}\Big|.
\end{equation*}
We remark that Theorem A above is actually the essential part of \cite[Theorem 3.1]{Jan_Trans_2006}.
In order to better compare with the results in this paper,
we would like to describe the previous known results in a unified form, without any change of their essence.

A stronger conclusion was also obtained in the same paper, see \cite[Corollary 3.3]{Jan_Trans_2006} or the following.

\medskip

\textbf{Theorem A*}(\cite[Corollary 3.3]{Jan_Trans_2006}).
Let $V$ be a positive measure with finite variation. Then
\begin{equation*}
  \lim_{t\rightarrow 0^+}\Big|\Big\{x\in \mathbb{R}^n: \Big|\mathcal {M}V_t(x)-\frac{V(\mathbb{R}^n)}{|x|^n}\Big|>\lambda\Big\}\Big|=0
\end{equation*}
for every fixed $\lambda>0$.

One of the main purposes of this paper is to improve and extend Theorems A and A*. More precisely, in Section 2, we will
show a stronger limiting behavior for the more general maximal operator $M_{\phi}^{\alpha}$, which is an essential improvement of Theorem A even for the special case $\mathcal{M}$ and is optimal in some sense (see Remark \ref{remark, comparison} below). The following is one of our main results.
\begin{theorem}\label{theorem, fractional maximal operator, measure}
  Let $\alpha\in [0,n)$, $\phi(x)=\Phi(|x|)$ be a radial function such that $\sup_{r>0}\phi^{\alpha}_r(e_1)<\infty$,
  where $\Phi: [0,\infty)\rightarrow [0,\infty)$ is decreasing, $e_1=(1,0,\cdots,0)$ is the vector on the unit sphere $\mathbb{S}^{n-1}$.
  Suppose that $\|\mathcal {M}_{\phi}^{\alpha}\mu\|_{L^{\frac{n}{n-\alpha},\infty}}\lesssim \mu(\mathbb{R}^n)$ holds for all positive measure $\mu$.
  Then, for any fixed $\rho>0$,
  \begin{equation*}
    \lim_{t\rightarrow0^{+}}
    \Big\|\mathcal {M}^{\alpha}_{\phi}(V_t)(\cdot)-\sup_{r>0}\phi^{\alpha}_r(\cdot)V(\mathbb{R}^n)\Big\|_{L^{\frac{n}{n-\alpha},\infty}(\mathbb{R}^n\backslash B(0,\rho))}=0
  \end{equation*}
  for all finite positive measure $V$.
\end{theorem}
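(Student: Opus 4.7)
The plan is to exploit the concentration of $V_t$ at the origin as $t\to 0^+$. First I would verify the scaling identity $f(x):=\sup_{r>0}\phi^\alpha_r(x)=C|x|^{\alpha-n}$ with $C=\sup_{u>0}u^{n-\alpha}\Phi(u)<\infty$ (via the substitution $u=|x|/r$), so that the candidate limit $f(x)V(\mathbb{R}^n)$ has an explicit closed form. The argument then has two main parts: a truncation step that uses the weak-type hypothesis, followed by a pointwise monotonicity squeeze for compactly supported $V$.

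For the truncation, I would split $V=V_1+V_2$ with $V_1=V|_{B(0,R)}$. Positivity and sub-additivity of $\mathcal{M}^\alpha_\phi$ in the measure argument yield $\bigl|\mathcal{M}^\alpha_\phi(V_t)-\mathcal{M}^\alpha_\phi(V_{1,t})\bigr|\leq\mathcal{M}^\alpha_\phi(V_{2,t})$, and the hypothesis then gives $\|\mathcal{M}^\alpha_\phi(V_{2,t})\|_{L^{n/(n-\alpha),\infty}}\lesssim V_2(\mathbb{R}^n)$. A direct level-set computation shows $\|f\|_{L^{n/(n-\alpha),\infty}}<\infty$, hence $\|fV_2(\mathbb{R}^n)\|_{L^{n/(n-\alpha),\infty}}\lesssim V_2(\mathbb{R}^n)$. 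Since $V$ is finite, $V_2(\mathbb{R}^n)\to 0$ as $R\to\infty$, so the quasi-triangle inequality for weak-$L^p$ reduces the problem, up to arbitrarily small error, to the case where $V$ is supported in some $B(0,R)$.

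For such $V$, fix $|x|\geq\rho$ and $t<\rho/(2R)$; the support of $V_t$ lies in $B(0,tR)$, and $|x-y|\in[|x|-tR,|x|+tR]$ for every $y$ there. Since $\Phi$ is decreasing, for any points $x^\pm$ with $|x^\pm|=|x|\pm tR$,
$$\phi^\alpha_r(x^+)\leq\phi^\alpha_r(x-y)\leq\phi^\alpha_r(x^-).$$
Integrating against $dV_t$ and taking $\sup_r$ produces the sandwich $f(x^+)V(\mathbb{R}^n)\leq\mathcal{M}^\alpha_\phi(V_t)(x)\leq f(x^-)V(\mathbb{R}^n)$, and $f(x^+)\leq f(x)\leq f(x^-)$ by the radial monotonicity of $f$. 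Hence the pointwise error is at most $V(\mathbb{R}^n)\bigl(f(x^-)-f(x^+)\bigr)=CV(\mathbb{R}^n)\bigl[(|x|-tR)^{\alpha-n}-(|x|+tR)^{\alpha-n}\bigr]$, which after applying the mean value theorem to $s\mapsto s^{\alpha-n}$ on $[|x|-tR,|x|+tR]$ is $\lesssim V(\mathbb{R}^n)\cdot tR\cdot|x|^{\alpha-n-1}$ whenever $|x|\geq 2tR$.

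Finally, a direct level-set computation gives $\||x|^{\alpha-n-1}\|_{L^{n/(n-\alpha),\infty}(\mathbb{R}^n\setminus B(0,\rho))}\lesssim\rho^{-1}$, which is finite precisely because of the extra $|x|^{-1}$ decay over $f$. Thus the compactly supported contribution is $O(tV(\mathbb{R}^n)R/\rho)$, vanishing as $t\to 0^+$, and combining with the truncation step via a standard $\varepsilon/2$ argument completes the proof. The main obstacle is engineering the squeeze so that the error gains exactly one extra power of $|x|^{-1}$ beyond the candidate limit: this extra decay is precisely what makes the weak-$L^{n/(n-\alpha),\infty}$ norm of the error finite on the unbounded exterior $\{|x|>\rho\}$, whereas a merely uniform pointwise bound would not suffice (nonzero constants are not in weak-$L^p$).
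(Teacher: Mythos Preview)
Your argument is correct and follows the same overall architecture as the paper's proof: a truncation step handled by the weak-type hypothesis, followed by a pointwise sandwich coming from the radial monotonicity of $\Phi$. The two proofs differ only in bookkeeping and in how the final error is controlled.

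For the truncation, the paper splits $V_t$ itself at radius $r_t=\sqrt{t}$, so that the tail mass $\epsilon_t=V_t(B(0,r_t)^c)=V(B(0,t^{-1/2})^c)\to 0$ automatically; you instead fix $R$, truncate $V$, and send $R\to\infty$ afterwards. These are interchangeable.

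For the error bound, the paper takes a slightly shorter path than yours. After the squeeze it observes (using $|x|/(|x|\pm r_t)\to 1$ uniformly for $|x|\ge\rho$, via the crude bound $|x|/(|x|-r_t)\le \rho/(\rho-r_t)$) that
\[
\Big|\mathcal{M}^{\alpha}_{\phi}(V^1_t)(x)-f(x)\Big|\le \beta_t\, f(x),\qquad \beta_t\to 0,
\]
and then simply uses that $f(x)=C|x|^{\alpha-n}$ lies \emph{exactly} in $L^{n/(n-\alpha),\infty}$, so $\|\beta_t f\|_{L^{n/(n-\alpha),\infty}}=\beta_t\|f\|_{L^{n/(n-\alpha),\infty}}\to 0$. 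Thus your mean-value-theorem step producing the extra factor $|x|^{-1}$ is not actually needed: the limit function itself already has finite weak norm, and a vanishing scalar multiple of it suffices. Your closing remark that one must ``gain exactly one extra power of $|x|^{-1}$'' is therefore too strong---the gain is pleasant but unnecessary. What \emph{would} fail is bounding the error by a nonzero constant; bounding it by $\beta_t f(x)$ is enough.
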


\begin{corollary}\label{corollary, fractional maximal operator, measure}
  Let $\alpha\in [0,n)$, $\phi(x)=\Phi(|x|)$ be a radial function, where $\Phi: [0,\infty)\rightarrow [0,\infty)$ is decreasing, bounded and compact supported.
  Then, for any fixed $\rho>0$,
  \begin{equation*}
    \lim_{t\rightarrow0^{+}}
    \Big\|\mathcal {M}^{\alpha}_{\phi}(V_t)(\cdot)-\sup_{r>0}\phi^{\alpha}_r(\cdot)V(\mathbb{R}^n)\Big\|_{L^{\frac{n}{n-\alpha},\infty}(\mathbb{R}^n\backslash B(0,\rho))}=0
  \end{equation*}
  for any fixed finite positive measure $V$.
\end{corollary}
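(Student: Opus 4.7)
The plan is to deduce Corollary~\ref{corollary, fractional maximal operator, measure} directly from Theorem~\ref{theorem, fractional maximal operator, measure} by checking that the two extra hypotheses of that theorem --- the finiteness of $\sup_{r>0}\phi^{\alpha}_r(e_1)$ and the weak-type bound $\|\mathcal{M}^{\alpha}_{\phi}\mu\|_{L^{n/(n-\alpha),\infty}}\lesssim\mu(\mathbb{R}^n)$ --- are automatic consequences of the standing assumption that $\Phi$ is bounded and compactly supported.

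For the first hypothesis, fix $R>0$ so that $\Phi$ is supported in $[0,R]$. Then
\[
  \phi^{\alpha}_r(e_1)=\frac{1}{r^{n-\alpha}}\Phi(1/r)
\]
vanishes when $r<1/R$, while for $r\ge 1/R$ it is bounded by $\|\Phi\|_{\infty}R^{n-\alpha}$. Hence $\sup_{r>0}\phi^{\alpha}_r(e_1)\le \|\Phi\|_{\infty}R^{n-\alpha}<\infty$.

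For the second hypothesis, the pointwise majorization $\phi\le\|\Phi\|_{\infty}\chi_{B(0,R)}$ yields, for every $r>0$ and every positive measure $\mu$,
\[
  \frac{1}{r^{n-\alpha}}\int_{\mathbb{R}^n}\phi\!\left(\frac{x-y}{r}\right)d\mu(y)
  \le \|\Phi\|_{\infty}\frac{\mu(B(x,rR))}{r^{n-\alpha}}
  = \|\Phi\|_{\infty}R^{n-\alpha}\,\frac{\mu(B(x,rR))}{(rR)^{n-\alpha}}.
\]
Taking the supremum over $r>0$ gives $\mathcal{M}^{\alpha}_{\phi}\mu(x)\le \|\Phi\|_{\infty}R^{n-\alpha}\,\mathcal{M}^{\alpha}\mu(x)$, so the desired weak-type bound reduces to the classical weak-type $(1,n/(n-\alpha))$ estimate for the fractional Hardy--Littlewood maximal operator on finite measures, which follows from a standard Vitali covering argument.

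With both assumptions verified, Theorem~\ref{theorem, fractional maximal operator, measure} applies and yields the stated conclusion. There is no genuine obstacle here: the corollary is a transparent specialization of the main theorem, and the only verification of substance --- the weak-type bound --- is handled by the domination by the classical fractional maximal operator combined with its well-known $L^1\to L^{n/(n-\alpha),\infty}$ bound for measures.
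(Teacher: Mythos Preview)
Your proposal is correct and follows essentially the same route as the paper: both verify the two hypotheses of Theorem~\ref{theorem, fractional maximal operator, measure} by (i) using the compact support of $\Phi$ to bound $\sup_{r>0}\phi^{\alpha}_r(e_1)$, and (ii) dominating $\mathcal{M}^{\alpha}_{\phi}$ pointwise by the classical fractional maximal operator $\mathcal{M}^{\alpha}$ and then invoking its weak-type $(1,n/(n-\alpha))$ bound for measures. The only cosmetic difference is that the paper normalizes to $R=1$, $\|\Phi\|_{\infty}=1$ and writes out the Vitali covering argument for $\mathcal{M}^{\alpha}$ explicitly, whereas you keep track of the constants and cite the covering argument as standard.
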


\begin{remark}\label{remark, comparison}
  We would like to make some comparisons between Corollary \ref{corollary, fractional maximal operator, measure} and
  the previous results in Theorems A and A*.
  Choosing $\alpha=0$ and $\phi=\chi_{B(0,1)}$ in Corollary \ref{corollary, fractional maximal operator, measure},
  and by the fact $\sup_{r>0}\phi_r^0(x)=\frac{1}{|x|^n}$ for $\phi=\chi_{B(0,1)}$,
  we
  immediately obtain
  \begin{equation}\label{for proof, 3}
    \lim_{t\rightarrow0^{+}}
    \Big\|\mathcal {M}(V_t)(\cdot)-\frac{V(\mathbb{R}^n)}{|\cdot|^n}\Big\|_{L^{1,\infty}(\mathbb{R}^n\backslash B(0,\rho))}=0
  \end{equation}
  for any fixed positive constant $\rho$.  Then Proposition \ref{proposition, comparison for limiting behavior} shows that
  this limiting is stronger than that in Theorems A and A*.

  In fact, the convergence in (\ref{for proof, 3})
  is optimal, in the sense that there is some $V$ such that following limiting behavior is negative:
  \begin{equation}\label{for proof, 2}
    \lim_{t\rightarrow0^{+}}
    \Big\|\mathcal {M}(V_t)(\cdot)-\frac{V(\mathbb{R}^n)}{|\cdot|^n}\Big\|_{L^{1,\infty}(\mathbb{R}^n)}=0.
  \end{equation}
  More precisely, we take $dV(x)=\chi_{B(0,1)}(x)dx$, where $dx$ denotes the Lebesgue measure.
  Then
  \begin{equation*}
    \begin{split}
      \mathcal {M}V_t(x)
      =
      \sup_{r>0}\frac{1}{r^n}\int_{B(x,r)}dV_t(y)
      =
      \sup_{r>0}\frac{1}{r^n}\int_{B(x,r)}\frac{1}{t^n}\chi_{B(0,t)}(y)dy
      =
      \sup_{r>0}\frac{1}{r^n}\cdot \frac{|B(0,t)\cap B(x,r)|}{t^n}.
    \end{split}
  \end{equation*}
  If $|x|\leq t/2$, we get
    \begin{equation*}
    \begin{split}
      \mathcal {M}V_t(x)
      =
      \frac{1}{r^n}\cdot \frac{|B(0,t)\cap B(x,r)|}{t^n}\Big|_{r\leq |t|-|x|}
      =
      \frac{1}{r^n}\cdot \frac{|B(x,r)|}{t^n}=\frac{|B(0,1)|}{t^n}=\frac{V(\mathbb{R}^n)}{t^n}.
    \end{split}
  \end{equation*}
  Also, for $|x|\leq t/2$, we have
  \begin{equation*}
    \frac{V(\mathbb{R}^n)}{|x|^n}\geq \frac{4V(\mathbb{R}^n)}{t^n}.
  \end{equation*}
  Combination of the above two estimates yields that
  \begin{equation*}
    \Big|\mathcal {M}V_t(x)-\frac{V(\mathbb{R}^n)}{|x|^n}\Big|\geq \frac{3V(\mathbb{R}^n)}{t^n}\ \text{for}\ x\in B(0,t/2).
  \end{equation*}
  Choose $\lambda_0=\frac{2 V(\mathbb{R}^n)}{t^n}$. We have
  \begin{equation*}
    \begin{split}
      \lambda_0\Big|\Big\{x\in \mathbb{R}^n:\, \Big|\mathcal {M}V_t(x)-\frac{V(\mathbb{R}^n)}{|x|^n}\Big|>\lambda_0\Big\}\Big|
      \geq &
      \lambda_0\Big|\Big\{x\in B(0,t/2):\,\Big|\mathcal {M}V_t(x)-\frac{V(\mathbb{R}^n)}{|x|^n}\Big|>\lambda_0\Big\}\Big|
      \\
      = &
      \lambda_0|B(0,t/2)|=\frac{2}{t^n}\cdot V(\mathbb{R}^n)\, \frac{t^n}{2^n}\, |B(0,1)|
      =\frac{|B(0,1)|^2}{2^{n-1}}.
    \end{split}
  \end{equation*}
  Thus,
  \begin{equation*}
    \begin{split}
     \varliminf_{t\rightarrow0^{+}}
      \Big\|\mathcal {M}(V_t)(\cdot)-\frac{V(\mathbb{R}^n)}{|\cdot|^n}\Big\|_{L^{1,\infty}(\mathbb{R}^n)}
      \geq &
      \lambda_0|\{x\in \mathbb{R}^n: |\mathcal {M}V_t(x)-\mathcal {M}\delta_0(x)\, V(\mathbb{R}^n)|>\lambda_0\}|
      \\
      \geq &
      \frac{|B(0,1)|^2}{2^{n-1}}.
    \end{split}
  \end{equation*}
  This shows that the limiting in (\ref{for proof, 2}) is negative, and (\ref{for proof, 3}) is optimal.
\end{remark}

On the other hand, the limiting behavior of singular integral with homogeneous kernel was also considered in \cite{Jan_Trans_2006}. Subsequently, it was improved by Ding and Lai in \cite{DL2}. Moreover, the weak limiting behavior of maximal operator associated with homogeneous kernel was also considered in \cite{DL1}. To state the relevant previous results, we first recall several definitions and notations. The integral operator we are interested in this paper are of the form
\begin{equation}\label{for proof, 4}
  T_{\Omega}^{\alpha}\mu(x):=\int_{\mathbb{R}^n}\frac{\Omega(x-y)}{|x-y|^{n-\alpha}}d\mu(y),
\end{equation}
where $\alpha\in [0,n)$,
$\Omega$ is a homogeneous function of degree zero and satisfies the following mean value zero property when $\alpha= 0$:
\begin{equation}\label{mean value zero}
  \int_{\mathbb{S}^{n-1}}\Omega(x')d\sigma(x')=0.
\end{equation}
As usual for $\alpha=0$, the right side of the equation (\ref{for proof, 4}) should be represented in the sense of principal value.
We write $T_{\Omega}:=T_{\Omega}^0$ for short.

We also consider the corresponding maximal operator associated with homogeneous kernel $\Omega$ defined by
\begin{equation*}
  M_{\Omega}^{\alpha}\mu(x):=\sup_{r>0}\frac{1}{r^{n-\alpha}}\int_{B(x,r)}|\Omega(x-y)|d\mu(y)
\end{equation*}
Note that if we take $\phi=|\Omega|\chi_{B(0,1)}$, then
\begin{equation*}
  \begin{split}
    \mathcal {M}_{\phi}^{\alpha}V(x)
    = &
    \sup_{r>0}\frac{1}{r^{n-\alpha}}\int_{\mathbb{R}^n}\Big|\Omega(\frac{x-y}{r})\Big|\chi_{B(0,1)}(\frac{x-y}{r})dV(y)
    \\
    = &
    \sup_{r>0}\frac{1}{r^{n-\alpha}}\int_{B(x,r)}|\Omega(x-y)|dV(y)=M_{\Omega}^{\alpha}V(x).
  \end{split}
\end{equation*}
In this case, $\phi=|\Omega|\chi_{B(0,1)}$ is not a radial function anymore.
To keep the limiting behavior still valid in this case,
we need to add some angular regularity to $\Omega$, which can be viewed as the alternative condition of radial property.
Now, we give the definition of angular regularity, namely, the Dini-condition.

\begin{definition}[$L^q_s$-Dini condition]\label{definition, Dini condition}
  Suppose $\Omega$ is a homogeneous function of degree zero. Let $1\leq q\leq \infty, 0\leq s<n$. We say that
  $\Omega$ satisfies $L^q_{s}$-Dini condition if
  \begin{enumerate}
    \item $\Omega\in L^q(\mathbb{S}^{n-1})$,
    \item $\int_0^1 \frac{{\omega}_q(t)}{t^{1+s}}dt<\infty$,
  \end{enumerate}
  where ${\omega}_q$ is called the (modified) integral continuous modulus of $\Omega$ of degree $q$, defined by
  \begin{equation}
    {\omega}_q(t):=\left(\sup_{|h|\leq t}\int_{\mathbb{S}^{n-1}}|\Omega(x'+h)-\Omega(x')|^qd\sigma(x')\right)^{1/q}.
  \end{equation}
\end{definition}
For brevity, we use $L^q$-Dini instead of $L^q_0$-Dini.
Note that
  for $q>1$ the above definition is a little different
  from \cite[Definition 2.7]{DL2} and \cite[Definition 4.1]{DL1}.

In order to compare our results with the relevant previous results, we now list the main theorems in \cite{DL1,DL2} as follows.

\medskip

\textbf{Theorem B} (cf. \cite{DL1}).\quad
Let $\Omega$ be a homogeneous function of degree $0$, satisfying (\ref{mean value zero}) and $L^1$-Dini condition.
Then for any fixed $\lambda>0$,
\begin{equation*}
  \lim_{t\rightarrow 0^+}|\{x\in \mathbb{R}^n: M_{\Omega}V_t(x)>\lambda \}|
  =\Big|\Big\{x\in \mathbb{R}^n: \frac{|\Omega(x)|V(\mathbb{R}^n)}{|x|^n}>\lambda\Big\}\Big|
\end{equation*}
for all finite positive absolutely continuous measure $V$.

\medskip

\textbf{Theorem C} (cf. \cite{DL1}).\quad
Let $\Omega\in L^{\frac{n}{n-\alpha}}(\mathbb{S}^{n-1})$ be a homogeneous function of degree $0$, satisfying $L^1_{\alpha}$-Dini condition.
Then for any fixed $\lambda>0$,
\begin{equation*}
  \lim_{t\rightarrow 0^+}|\{x\in \mathbb{R}^n: M_{\Omega}^{\alpha}V_t(x)>\lambda \}|
  =\Big|\Big\{x\in \mathbb{R}^n: \frac{|\Omega(x)|V(\mathbb{R}^n)}{|x|^{n-\alpha}}>\lambda \Big\}\Big|
\end{equation*}
for all finite positive absolutely continuous measure $V$.

\medskip

\textbf{Theorem D} (cf. \cite{DL2}).\quad
Let $\Omega$ be a homogeneous function of degree $0$, satisfying (\ref{mean value zero}) and $L^1$-Dini condition.
Then for any fixed $\lambda>0$,
\begin{equation*}
  \lim_{t\rightarrow 0^+}\Big|\Big\{x\in \mathbb{R}^n: T_{\Omega}V_t(x)>\lambda \Big\}\Big|
  =|\{x\in \mathbb{R}^n: \frac{|\Omega(x)|V(\mathbb{R}^n)}{|x|^n}>\lambda \}|
\end{equation*}
for all finite positive absolutely continuous measure $V$.

\medskip

\textbf{Theorem E} (cf. \cite{DL2}).\quad
Let $\Omega\in L^{\frac{n}{n-\alpha}}(\mathbb{S}^{n-1})$ be a homogeneous function of degree $0$, satisfying $L^1_{\alpha}$-Dini condition.
Then for any fixed $\lambda>0$,
\begin{equation*}
  \lim_{t\rightarrow 0^+}|\{x\in \mathbb{R}^n: T_{\Omega}^{\alpha}V_t(x)>\lambda \}|
  =\Big|\Big\{x\in \mathbb{R}^n: \frac{|\Omega(x)|V(\mathbb{R}^n)}{|x|^{n-\alpha}}>\lambda \Big\}\Big|
\end{equation*}
for all finite positive absolutely continuous measure $V$.

\medskip

The second purpose of this paper is to improve and extend the above results in Theorems B-E.
$\Omega\in L^{\frac{n}{n-\alpha}}(\mathbb{S}^{n-1})$ will be proved to be necessary
if the corresponding operator $M_{\Omega}^{\alpha}$ or $T_{\Omega}^{\alpha}$ is bounded
with $\Omega$ satisfying $L_{1}^{\alpha}$-Dini condition.
The limiting behaviors in the above four theorems will be improved (see Remarks \ref{remark, comparison, maximal operator with homogeneous kernel} and \ref{remark, comparison, integral operator with homogeneous kernel} below). Our main results in this part can be formulated as follows,

\begin{theorem}\label{Theorem, fractional maximal with homogeneous kernel}
Let $\alpha\in [0,n)$, $V$ be an absolutely continuous positive measure.
Suppose that $\Omega$ is a homogeneous function of degree zero and satisfies the $L^1_{\alpha}$-Dini condition.
If the maximal operator $M_{\Omega}^{\alpha}$ is bounded from $L^1$ to $L^{\frac{n}{n-\alpha},\infty}$, then
\begin{enumerate}
\item    $\frac{\Omega(x)}{|x|^{n-\alpha}}\in L^{\frac{n}{n-\alpha},\infty}$, $\Big\|\frac{\Omega(\cdot)}{|\cdot|^{n-\alpha}}\Big\|_{L^{\frac{n}{n-\alpha},\infty}}\lesssim \|M_{\Omega}^{\alpha}\|_{L^1\rightarrow L^{\frac{n}{n-\alpha},\infty}}$;
\item    $\Omega\in L^{\frac{n}{n-\alpha}}(\mathbb{S}^{n-1})$, $\|\Omega \|_{L^{\frac{n}{n-\alpha}}(\mathbb{S}^{n-1})}\lesssim \|M_{\Omega}^{\alpha}\|_{L^1\rightarrow L^{\frac{n}{n-\alpha},\infty}}$;
\item   $\displaystyle\lim_{t\rightarrow 0^{+}}\Big|\Big\{x\in \mathbb{R}^n: \Big|M^{\alpha}_{\Omega}(V_t)(\cdot)-\frac{|\Omega(\cdot)|}{|\cdot|^{\frac{n}{n-\alpha}}}V(\mathbb{R}^n)\Big|>\lambda\Big\}\Big|=0$, \, $\forall\,\lambda>0$.
\end{enumerate}
\end{theorem}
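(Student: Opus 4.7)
The plan is to treat the three assertions in order, using (1) and (2) to control (3). For (1), I would test the hypothesised $L^1\to L^{n/(n-\alpha),\infty}$ bound on the normalised approximate identities $V_\varepsilon := |B(0,1)|^{-1}\varepsilon^{-n}\chi_{B(0,\varepsilon)}$, which have unit $L^1$-norm. A short computation shows that $M^\alpha_\Omega V_\varepsilon(x)\to |\Omega(x)|/|x|^{n-\alpha}$ pointwise on $\mathbb{R}^n\setminus\{0\}$ as $\varepsilon\to 0^+$, since the supremum in $r$ is attained as soon as $B(x,r)$ swallows $\operatorname{supp}(V_\varepsilon)$, which forces $r\to|x|$. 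Lower semicontinuity of the quasinorm on $L^{n/(n-\alpha),\infty}$ under pointwise limits then delivers (1). Part (2) follows by polar integration: the $0$-homogeneity of $\Omega$ gives $|\{x:|\Omega(x)|/|x|^{n-\alpha}>\lambda\}|=(n\lambda^{n/(n-\alpha)})^{-1}\|\Omega\|_{L^{n/(n-\alpha)}(\mathbb{S}^{n-1})}^{n/(n-\alpha)}$, which identifies the weak Lorentz quasinorm with the spherical $L^{n/(n-\alpha)}$-norm up to a dimensional constant.

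For assertion (3) the strategy has two stages: first establish the quantitative statement $\|M^\alpha_\Omega V_t - N\|_{L^{n/(n-\alpha),\infty}(\mathbb{R}^n\setminus B(0,\rho))}\to 0$ as $t\to 0^+$ for every $\rho>0$ (with $N(x):=|\Omega(x)|V(\mathbb{R}^n)/|x|^{n-\alpha}$), and then pass to the global measure-convergence statement by trivially absorbing $B(0,\rho)$. The first stage follows the blueprint of Theorem~\ref{theorem, fractional maximal operator, measure}, with the radial-monotone structure of $\phi$ there replaced by the angular regularity of $\Omega$. After the substitution $y=tz$, one splits $\Omega(x-tz)=\Omega(x)+[\Omega(x-tz)-\Omega(x)]$: the principal term reproduces $N(x)$ in the limit because, by tightness of $V$, the admissible radii $r$ concentrate near $|x|$ and $r^{\alpha-n}$ is maximised there, while the error term is controlled because $0$-homogeneity projects $\Omega(x-tz)-\Omega(x)$ onto a sphere-perturbation of magnitude $O(t|z|/|x|)$, whose average is bounded by the modulus $\omega_1$.

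The second stage is routine: given $\lambda,\eta>0$, choose $\rho>0$ with $|B(0,\rho)|<\eta/2$, then apply the first stage together with Chebyshev in $L^{n/(n-\alpha),\infty}$ to force $|\{|M^\alpha_\Omega V_t-N|>\lambda\}\cap B(0,\rho)^c|<\eta/2$ for $t$ small. The main obstacle is the first stage: without radial symmetry one cannot collapse the supremum to a one-dimensional problem, so the angular oscillation of $\Omega$ must be handled by a genuinely multi-dimensional argument. This is precisely where $\int_0^1\omega_1(t)t^{-1-\alpha}\,dt<\infty$ enters essentially, and where the interaction between the fractional scaling $r^{\alpha-n}$ and the angular modulus $\omega_1$ must be balanced---most naturally via a dyadic decomposition of $t|z|/|x|$ combined with Fubini on $\mathbb{S}^{n-1}$ to convert the Dini integral into the required weak $L^{n/(n-\alpha)}$ estimate.
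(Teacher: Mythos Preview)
Your approach to (1)---testing on normalised indicators of shrinking balls, passing to pointwise limits via Lebesgue differentiation, and invoking Fatou for the weak quasinorm---is valid and somewhat different from the paper's: there (1) is derived from a level-set lower bound $\varliminf_t |\{M^\alpha_\Omega V_t>\lambda\}|\ge |\{|\Omega(x)|/|x|^{n-\alpha}>\lambda\}|$ obtained for the \emph{given} $V$ as a by-product of the analysis for (3). Your route is cleaner for (1) in isolation; the paper's has the advantage that the same estimates are recycled for (3). Part (2) is handled identically in both.

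The genuine gap is in your first stage for (3). You assert that the $L^1_\alpha$-Dini condition yields the norm convergence $\|M^\alpha_\Omega V_t-N\|_{L^{n/(n-\alpha),\infty}(\mathbb{R}^n\setminus B(0,\rho))}\to 0$, but this is exactly what the paper does \emph{not} obtain under $L^1_\alpha$-Dini; it is the content of the separate Theorem~\ref{Theorem, fractional maximal with homogeneous kernel, Lq-Dini}, which requires the stronger $L^{n/(n-\alpha)}$-Dini hypothesis. The obstruction is the error term $E_t(x):=|x|^{-(n-\alpha)}\int_{B(0,r_t)}|\Omega(x-y)-\Omega(x)|\,dV^1_t(y)$. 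Under $L^1_\alpha$-Dini, polar coordinates and Fubini give only an $L^1$ bound,
\[
\int_{|x|>\rho} E_t(x)\,dx \;\lesssim\; \sup_{|y|<r_t}\,|y|^\alpha\!\int_0^{|y|/\rho}\frac{\omega_1(s)}{s^{1+\alpha}}\,ds \;\longrightarrow\; 0,
\]
so Chebyshev yields $|\{x\in B(0,\rho)^c:E_t(x)>\lambda\}|\to 0$ for each fixed $\lambda$, but \emph{not} $\sup_\lambda \lambda\,|\{E_t>\lambda\}|^{(n-\alpha)/n}\to 0$. Your proposed ``dyadic decomposition of $t|z|/|x|$ combined with Fubini on $\mathbb{S}^{n-1}$'' cannot manufacture $L^{n/(n-\alpha)}$ control from the $L^1$ modulus $\omega_1$; that genuinely needs $\omega_{n/(n-\alpha)}$.

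The fix is easy: weaken your first stage to convergence \emph{in measure} on $B(0,\rho)^c$ (which the $L^1$ bound above does deliver via Chebyshev), and then your second stage goes through verbatim---the Chebyshev-in-$L^{n/(n-\alpha),\infty}$ step there is superfluous, since you already have $|\{|M^\alpha_\Omega V_t-N|>\lambda\}\cap B(0,\rho)^c|\to 0$ directly, and adding $|B(0,\rho)|<\eta/2$ finishes. This is precisely the paper's route.
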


\begin{theorem}\label{Theorem, fractional maximal with homogeneous kernel, Lq-Dini}
Let $\alpha\in [0,n)$, $V$ be a absolutely continuous positive measure.
Suppose that $\Omega$ is a homogeneous function of degree zero and satisfies the $L^{\frac{n}{n-\alpha}}$-Dini condition.
If the maximal operator $M_{\Omega}^{\alpha}$ is bounded from $L^1$ to $L^{\frac{n}{n-\alpha},\infty}$, then
\begin{enumerate}
\item    $\frac{\Omega(x)}{|x|^{n-\alpha}}\in L^{\frac{n}{n-\alpha},\infty}$, $\Big\|\frac{\Omega(\cdot)}{|\cdot|^{n-\alpha}}\Big\|_{L^{\frac{n}{n-\alpha},\infty}}\lesssim \|M_{\Omega}^{\alpha}\|_{L^1\rightarrow L^{\frac{n}{n-\alpha},\infty}}$;
\item    $\Omega\in L^{\frac{n}{n-\alpha}}(\mathbb{S}^{n-1})$, $\|\Omega \|_{L^{\frac{n}{n-\alpha}}(\mathbb{S}^{n-1})}\lesssim \|M_{\Omega}^{\alpha}\|_{L^1\rightarrow L^{\frac{n}{n-\alpha},\infty}}$;
\item    $\displaystyle\lim_{t\rightarrow 0^{+}} \Big\|M^{\alpha}_{\Omega}(V_t)-\frac{|\Omega(\cdot)|}{|\cdot|^{n-\alpha}}V(\mathbb{R}^n)\Big\|_{L^{\frac{n}{n-\alpha},\infty}(\mathbb{R}^n\backslash B(0,\rho))}=0$
    for every $\rho>0$.
\end{enumerate}
\end{theorem}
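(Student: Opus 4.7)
The plan is to dispatch (1) and (2) as preparatory and then concentrate on (3), which carries the bulk of the work. For (1), I would test the weak-type hypothesis against the $L^{1}$-normalized family $f_{\varepsilon}=|B(0,\varepsilon)|^{-1}\chi_{B(0,\varepsilon)}$ of approximate identities for the Dirac mass. A direct computation gives $M^{\alpha}_{\Omega}\delta_{0}(x)=\sup_{r>|x|}|\Omega(x)|/r^{n-\alpha}=|\Omega(x)|/|x|^{n-\alpha}$, and plugging $r=|x|+\varepsilon$ into the defining supremum together with the Lebesgue differentiation theorem for $\Omega\in L^{n/(n-\alpha)}_{\mathrm{loc}}(\mathbb{R}^{n}\setminus\{0\})$ yields $M^{\alpha}_{\Omega}f_{\varepsilon}(x)\to|\Omega(x)|/|x|^{n-\alpha}$ a.e.\ as $\varepsilon\to 0^{+}$. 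Lower semicontinuity of the $L^{p,\infty}$ quasi-norm under pointwise a.e.\ limits then closes (1). For (2), a polar coordinate evaluation produces the exact identity $\bigl\||\Omega|/|\cdot|^{n-\alpha}\bigr\|_{L^{\frac{n}{n-\alpha},\infty}}=n^{-(n-\alpha)/n}\|\Omega\|_{L^{n/(n-\alpha)}(\mathbb{S}^{n-1})}$, so (2) follows immediately from (1).

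For (3), I would mollify $\Omega$ on the sphere, via spherical convolution with an approximate identity, to produce for each $\eta>0$ a continuous kernel $\Omega_{\eta}\in C(\mathbb{S}^{n-1})$ (extended by $0$-homogeneity) with $\|\Omega-\Omega_{\eta}\|_{L^{n/(n-\alpha)}(\mathbb{S}^{n-1})}<\eta$, and then quasi-triangulate
\[
M^{\alpha}_{\Omega}(V_{t})-\tfrac{|\Omega|}{|\cdot|^{n-\alpha}}V(\mathbb{R}^{n})=\mathrm{I}_{t}+\mathrm{II}_{t}+\mathrm{III},
\]
with $|\mathrm{I}_{t}|\le M^{\alpha}_{|\Omega-\Omega_{\eta}|}V_{t}$, $\mathrm{II}_{t}=M^{\alpha}_{\Omega_{\eta}}V_{t}-\tfrac{|\Omega_{\eta}|}{|\cdot|^{n-\alpha}}V(\mathbb{R}^{n})$, and $\mathrm{III}=\tfrac{|\Omega_{\eta}|-|\Omega|}{|\cdot|^{n-\alpha}}V(\mathbb{R}^{n})$. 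From the pointwise bound $M^{\alpha}_{\Psi}f(x)\le\int|\Psi(x-y)||x-y|^{-(n-\alpha)}f(y)\,dy$, Young's convolution inequality in weak Lorentz spaces, and part~(2) applied to $\Psi=\Omega-\Omega_{\eta}$, one obtains $\|\mathrm{I}_{t}\|_{L^{\frac{n}{n-\alpha},\infty}}+\|\mathrm{III}\|_{L^{\frac{n}{n-\alpha},\infty}}\lesssim\eta\,V(\mathbb{R}^{n})$ uniformly in $t$, using that $V$ absolutely continuous means $V_{t}\in L^{1}$ with $\|V_{t}\|_{L^{1}}=V(\mathbb{R}^{n})$.

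For the central term I would rely on the pointwise decomposition, obtained from $|\Omega_{\eta}(x-y)|=|\Omega_{\eta}(x)|+(|\Omega_{\eta}(x-y)|-|\Omega_{\eta}(x)|)$ inside the defining supremum,
\[
\bigl|M^{\alpha}_{\Omega_{\eta}}V_{t}(x)-|\Omega_{\eta}(x)|\,\mathcal{M}^{\alpha}V_{t}(x)\bigr|\le E_{t}(x):=\sup_{r>0}\tfrac{1}{r^{n-\alpha}}\int_{B(x,r)}\bigl|\,|\Omega_{\eta}(x-y)|-|\Omega_{\eta}(x)|\,\bigr|\,dV_{t}(y),
\]
giving $|\mathrm{II}_{t}(x)|\le|\Omega_{\eta}(x)|\bigl|\mathcal{M}^{\alpha}V_{t}(x)-V(\mathbb{R}^{n})/|x|^{n-\alpha}\bigr|+E_{t}(x)$. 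The first summand has weak-$L^{n/(n-\alpha)}$ norm on $\mathbb{R}^{n}\setminus B(0,\rho)$ at most $\|\Omega_{\eta}\|_{L^{\infty}(\mathbb{S}^{n-1})}$ times $\|\mathcal{M}^{\alpha}V_{t}-V(\mathbb{R}^{n})/|\cdot|^{n-\alpha}\|_{L^{\frac{n}{n-\alpha},\infty}(\mathbb{R}^{n}\setminus B(0,\rho))}$, which vanishes as $t\to 0^{+}$ by Theorem~\ref{theorem, fractional maximal operator, measure} applied to $\phi=\chi_{B(0,1)}$. For $E_{t}$ I would split the $y$-integral at $|y|<\rho\delta$ versus $|y|\ge\rho\delta$: on the first piece, uniform continuity of $\Omega_{\eta}$ together with the geometric bound $\bigl|\tfrac{x-y}{|x-y|}-\tfrac{x}{|x|}\bigr|\lesssim|y|/|x|\le\delta$ controls the integrand by the modulus $\omega_{\Omega_{\eta}}(C\delta)$, while on the second $V_{t}(\{|y|\ge\rho\delta\})\to 0$ as $t\to 0^{+}$; combined with the uniform estimate $\|\mathcal{M}^{\alpha}V_{t}\|_{L^{\frac{n}{n-\alpha},\infty}}\lesssim V(\mathbb{R}^{n})$, this gives $\|E_{t}\|_{L^{\frac{n}{n-\alpha},\infty}(\mathbb{R}^{n}\setminus B(0,\rho))}\lesssim(\omega_{\Omega_{\eta}}(C\delta)+o_{t}(1))V(\mathbb{R}^{n})$. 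Sending $t\to 0^{+}$, then $\delta\to 0$, then $\eta\to 0$ finishes (3). The main obstacle is precisely this step: a.e.\ pointwise convergence does not imply weak-$L^{n/(n-\alpha)}$ convergence, so one needs the split $M^{\alpha}_{\Omega_{\eta}}V_{t}\approx|\Omega_{\eta}|\mathcal{M}^{\alpha}V_{t}$ modulo the small angular error $E_{t}$ to reduce matters to Theorem~\ref{theorem, fractional maximal operator, measure} multiplied by the bounded factor $|\Omega_{\eta}|$, with the $L^{n/(n-\alpha)}$-Dini hypothesis ensuring that the mollification errors $\mathrm{I}$ and $\mathrm{III}$ are small in the ambient weak-$L^{n/(n-\alpha)}$ quasi-norm.
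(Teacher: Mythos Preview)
Your treatment of (1) and (2) is correct and essentially equivalent to the paper's: the paper tests against the dilates $V_t$ rather than $|B(0,\varepsilon)|^{-1}\chi_{B(0,\varepsilon)}$, but the mechanism is the same Fatou-type lower bound for the weak norm combined with the polar identity of Property~(B).

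For (3) your route is genuinely different. The paper does \emph{not} mollify $\Omega$: it splits $V_t=V_t^1+V_t^2$ with $V_t^1$ supported in $B(0,\sqrt t)$, disposes of the tail via the operator hypothesis, and then bounds the angular error $|x|^{-(n-\alpha)}\!\int_{B(0,\sqrt t)}|\Omega(x-y)-\Omega(x)|\,dV_t^1(y)$ directly in the $L^{\frac{n}{n-\alpha}}$-norm on $\{|x|>\rho\}$ via Minkowski and the computation
\[
\int_{|x|>\rho}\frac{|\Omega(x-y)-\Omega(x)|^{\frac{n}{n-\alpha}}}{|x|^{n}}\,dx
\le \int_{0}^{|y|/\rho}\frac{\omega_{\frac{n}{n-\alpha}}(s)^{\frac{n}{n-\alpha}}}{s}\,ds,
\]
which is precisely where the Dini integral is spent. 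Your scheme---approximate $\Omega$ by $\Omega_\eta\in C(\mathbb S^{n-1})$ in $L^{\frac{n}{n-\alpha}}$, reduce $\mathrm{II}_t$ to the radial case via $M^{\alpha}_{\Omega_\eta}\approx|\Omega_\eta|\,\mathcal M^{\alpha}$ plus a uniformly small angular remainder $E_t$, and invoke Theorem~\ref{theorem, fractional maximal operator, measure}---is valid when $\alpha>0$ and has the pleasant feature that it uses only $\Omega\in L^{\frac{n}{n-\alpha}}(\mathbb S^{n-1})$, not the Dini integral itself.

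There is, however, a genuine gap at $\alpha=0$. Your control of $\mathrm I_t$ rests on $|\mathrm I_t|\le M_{|\Omega-\Omega_\eta|}V_t\le \frac{|\Omega-\Omega_\eta|}{|\cdot|^{n-\alpha}}*V_t$ followed by weak Young, but Lemma~\ref{Lemma, weak Young's inequality} requires the weak exponent $r=\frac{n}{n-\alpha}>1$, i.e.\ $\alpha>0$. For $\alpha=0$ you are asking for $L^1*L^{1,\infty}\subset L^{1,\infty}$, which is false; equivalently, $\|\Omega-\Omega_\eta\|_{L^1(\mathbb S^{n-1})}<\eta$ does \emph{not} imply that $M_{|\Omega-\Omega_\eta|}$ is weak-$(1,1)$ with norm $O(\eta)$. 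This is exactly the endpoint where the $L^1$-Dini hypothesis does real work, and your mollification does not exploit it---despite your final sentence, the only consequence of Dini you actually use is the membership $\Omega\in L^{\frac{n}{n-\alpha}}(\mathbb S^{n-1})$. The paper's direct estimate of the angular error sidesteps the problem because it never passes through the auxiliary operator $M_{|\Omega-\Omega_\eta|}$; to repair your argument at $\alpha=0$ you would essentially have to import that estimate for the $\mathrm I_t$ term, at which point the mollification is no longer doing anything.
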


\begin{remark}\label{remark, comparison, maximal operator with homogeneous kernel}
  In Theorem \ref{Theorem, fractional maximal with homogeneous kernel}, we show that $\Omega\in L^{\frac{n}{n-\alpha}}(\mathbb{S}^{n-1})$
  is necessary if $M_{\Omega}^{\alpha}$ is $L^1\rightarrow L^{\frac{n}{n-\alpha},\infty}$ bounded
  with $\Omega$ satisfying the $L^1_{\alpha}$-Dini condition.
  This conclusion is new for $\alpha>0$,
  since $\Omega\in L^{\frac{n}{n-\alpha}}(\mathbb{S}^{n-1})$ is needed in the assumption of Theorem C.
  Moreover, compared with the conclusion in Theorems B and C, in Theorem \ref{Theorem, fractional maximal with homogeneous kernel}
  we obtain the limiting behavior in type-2 sense, which is stronger than type-3 sense as in Theorems B and C.
  Furthermore, as mentioned in Theorem \ref{Theorem, fractional maximal with homogeneous kernel, Lq-Dini},
  if $\Omega$ satisfies $L^{\frac{n}{n-\alpha}}$-Dini condition,
  the limiting behavior holds in the type-1 sense (see Remark 2.2 in Section 2 for the sense of "type-$i$", $i=1,\,2,\,3$).
\end{remark}

\begin{theorem}\label{Theorem, fractional operator with homogeneous kernel}
Let $\alpha\in [0,n)$, $V$ be a absolutely continuous positive measure.
Suppose that $\Omega$ is a homogeneous function of degree zero and satisfies the $L^1_{\alpha}$-Dini condition.
If the singular or factional integral operator $T_{\Omega}^{\alpha}$ is bounded from $L^1$ to $L^{\frac{n}{n-\alpha},\infty}$, then
\begin{enumerate}
\item    $\frac{\Omega(x)}{|x|^{n-\alpha}}\in L^{\frac{n}{n-\alpha},\infty}$, $\Big\|\frac{\Omega(\cdot)}{|\cdot|^{n-\alpha}}\Big\|_{L^{\frac{n}{n-\alpha},\infty}}\lesssim \|T_{\Omega}^{\alpha}\|_{L^1\rightarrow L^{\frac{n}{n-\alpha},\infty}}$;
\item    $\Omega\in L^{\frac{n}{n-\alpha}}(\mathbb{S}^{n-1})$, $\|\Omega \|_{L^{\frac{n}{n-\alpha}}(\mathbb{S}^{n-1})}\lesssim \|T_{\Omega}^{\alpha}\|_{L^1\rightarrow L^{\frac{n}{n-\alpha},\infty}}$;
\item   $\displaystyle\lim_{t\rightarrow 0^{+}}\Big|\Big\{x\in \mathbb{R}^n: \Big|T^{\alpha}_{\Omega}(V_t)(\cdot)-\frac{\Omega(\cdot)}{|\cdot|^{\frac{n}{n-\alpha}}}V(\mathbb{R}^n)\Big|>\lambda\Big\}\Big|=0$, $\forall\,\lambda>0$.
\end{enumerate}
\end{theorem}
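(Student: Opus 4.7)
The plan mirrors the structure used for the maximal operator in Theorem \ref{Theorem, fractional maximal with homogeneous kernel} and adapts it to the integral operator, where the cancellations require additional care.

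For (1), I will test the assumed weak-type inequality on the $L^1$-normalized approximate identity $\phi_{\epsilon}:=\epsilon^{-n}|B(0,1)|^{-1}\chi_{B(0,\epsilon)}$. For every $x$ with $|x|>\epsilon$, the integral $T_{\Omega}^{\alpha}\phi_{\epsilon}(x)$ is an absolutely convergent average of $\Omega(x-y)/|x-y|^{n-\alpha}$ over $y\in B(0,\epsilon)$; the $L^1_{\alpha}$-Dini condition implies this average tends to $\Omega(x)/|x|^{n-\alpha}$ as $\epsilon\to 0^+$ (in the $\alpha=0$ case the mean-zero hypothesis on $\Omega$ is used to identify the limit with the principal-value kernel rather than with a spurious spherical average). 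Fatou's lemma in the Lorentz quasi-norm then gives
\begin{equation*}
\big\|\Omega(\cdot)/|\cdot|^{n-\alpha}\big\|_{L^{n/(n-\alpha),\infty}}\leq\liminf_{\epsilon\to 0^+}\big\|T_{\Omega}^{\alpha}\phi_{\epsilon}\big\|_{L^{n/(n-\alpha),\infty}}\leq\big\|T_{\Omega}^{\alpha}\big\|_{L^1\to L^{n/(n-\alpha),\infty}},
\end{equation*}
which is (1). Part (2) is a polar-coordinate computation: the distribution function of $\Omega/|x|^{n-\alpha}$ is explicit and yields the identity $\|\Omega(\cdot)/|\cdot|^{n-\alpha}\|_{L^{n/(n-\alpha),\infty}}=n^{-(n-\alpha)/n}\|\Omega\|_{L^{n/(n-\alpha)}(\mathbb{S}^{n-1})}$, so (2) follows from (1).

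For (3), the plan is a two-step reduction followed by a kernel-difference estimate. Fix $\lambda>0$ and $\epsilon>0$, and decompose $V=\tilde V+V_{\mathrm{err}}$ with $\tilde V\in C_c^{\infty}$ supported in some $B(0,R)$ and $\|V_{\mathrm{err}}\|_1<\epsilon$. The weak-type bound for $T_{\Omega}^{\alpha}$ combined with part (1) controls the contribution of $V_{\mathrm{err}}$ to the level set at height $\lambda$ by $C(\epsilon/\lambda)^{n/(n-\alpha)}$, uniformly in $t$; since $\epsilon$ is arbitrary, it suffices to prove vanishing of the level-set measure for $\tilde V$. For this, $\tilde V_t$ is supported in $B(0,tR)$, and for fixed $\rho>0$ I split $\mathbb{R}^n=B(0,\rho)\cup B(0,\rho)^c$. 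The intersection with $B(0,\rho)$ contributes at most $|B(0,\rho)|$, negligible for small $\rho$. On $B(0,\rho)^c$, for $t$ small enough that $tR\leq\rho/2$ I write
\begin{equation*}
T_{\Omega}^{\alpha}\tilde V_t(x)-\tilde V(\mathbb{R}^n)\frac{\Omega(x)}{|x|^{n-\alpha}}=\int_{B(0,tR)}\left[\frac{\Omega(x-y)}{|x-y|^{n-\alpha}}-\frac{\Omega(x)}{|x|^{n-\alpha}}\right]d\tilde V_t(y),
\end{equation*}
and decompose the kernel difference into an angular piece controlled by the Dini modulus $\omega_1(|y|/|x|)$ and a radial piece bounded by $|\Omega(x)|\,|y|/|x|^{n-\alpha+1}$. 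Integrating in the angular variable of $x$ and applying Chebyshev on $B(0,\rho)^c$, together with the Dini integrability $\int_0^1\omega_1(t)/t^{1+\alpha}\,dt<\infty$, shows that the level-set measure tends to $0$ as $t\to 0^+$.

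The principal obstacle lies in the quantitative Dini estimate underlying (3): unlike the maximal case treated in Theorem \ref{Theorem, fractional maximal with homogeneous kernel}, absolute values cannot be brought inside $T_{\Omega}^{\alpha}$, so the angular cancellations must be integrated against the $L^{n/(n-\alpha),\infty}$ distribution function through $\omega_1$ rather than pointwise. A secondary difficulty is the $\alpha=0$ principal-value case in part (1), where the mean-zero hypothesis must be used essentially to match $\lim_{\epsilon\to 0^+}T_{\Omega}\phi_{\epsilon}(x)$ with the p.v.\ value $\Omega(x)/|x|^n$.
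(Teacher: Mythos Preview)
Your proposal is correct and reaches the same destination as the paper, but the route is organized differently and in one place is more elementary. For part (1) the paper does not test on a single approximate identity and invoke Fatou; instead it runs the full level-set machinery for a general $V$ (truncating $V_t=V_t^1+V_t^2$ with $V_t^1=V_t\chi_{B(0,\sqrt t)}$), proves the key pointwise bound
\[
\Bigl|T_\Omega^\alpha V_t^1(x)-\frac{\Omega(x)}{|x|^{n-\alpha}}\Bigr|\le \frac{2^{n-\alpha}}{|x|^{n-\alpha}}\int|\Omega(x-y)-\Omega(x)|\,dV_t^1(y)+\beta_t\frac{|\Omega(x)|}{|x|^{n-\alpha}},
\]
and then extracts (1) from the resulting lower bound on the level set combined with the assumed weak-type norm. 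Your Fatou argument is shorter, and in fact needs less: the a.e.\ convergence $T_\Omega^\alpha\phi_\epsilon(x)\to\Omega(x)/|x|^{n-\alpha}$ is just the Lebesgue differentiation theorem for the locally integrable kernel $\Omega(\cdot)/|\cdot|^{n-\alpha}$, so the Dini hypothesis is not needed at this step (nor is the mean-zero condition in the $\alpha=0$ case, since for $|x|>2\epsilon$ the convolution avoids the singularity entirely and no principal value is taken). For part (3) the paper again uses the $V_t^1,V_t^2$ split rather than your density reduction $V=\tilde V+V_{\mathrm{err}}$ to compactly supported $\tilde V$, but both routes funnel into the identical Chebyshev/Dini estimate
\[
\int_{B(0,\rho)^c}\frac{|\Omega(x-y)-\Omega(x)|}{|x|^{n-\alpha}}\,dx\le |y|^\alpha\int_0^{|y|/\rho}\frac{\omega_1(s)}{s^{1+\alpha}}\,ds\longrightarrow 0.
\]
The paper's version is self-contained (it does not need (1) already in hand to control the tail piece), while yours is more modular once (1) is established.
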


\begin{theorem}\label{Theorem, fractional operator with homogeneous kernel, Lq-Dini}
Let $\alpha\in [0,n)$, $V$ be a absolutely continuous positive measure.
Suppose that $\Omega$ is a homogeneous function of degree zero and satisfies the $L^{\frac{n}{n-\alpha}}$-Dini condition.
If $T_{\Omega}^{\alpha}$ is bounded from $L^1$ to $L^{\frac{n}{n-\alpha},\infty}$, then
\begin{enumerate}
\item    $\frac{\Omega(x)}{|x|^{n-\alpha}}\in L^{\frac{n}{n-\alpha},\infty}$, $\Big\|\frac{\Omega(\cdot)}{|\cdot|^{n-\alpha}}\Big\|_{L^{\frac{n}{n-\alpha},\infty}}\lesssim \|T_{\Omega}^{\alpha}\|_{L^1\rightarrow L^{\frac{n}{n-\alpha},\infty}}$;
\item    $\Omega\in L^{\frac{n}{n-\alpha}}(\mathbb{S}^{n-1})$, $\|\Omega \|_{L^{\frac{n}{n-\alpha}}(\mathbb{S}^{n-1})}\lesssim \|T_{\Omega}^{\alpha}\|_{L^1\rightarrow L^{\frac{n}{n-\alpha},\infty}}$;
\item    $\displaystyle\lim_{t\rightarrow 0^{+}} \Big\|T^{\alpha}_{\Omega}(V_t)-\frac{\Omega(\cdot)}{|\cdot|^{n-\alpha}}V(\mathbb{R}^n)\Big\|_{L^{\frac{n}{n-\alpha},\infty}(\mathbb{R}^n\backslash B(0,\rho))}=0$
    for every $\rho>0$.
\end{enumerate}
\end{theorem}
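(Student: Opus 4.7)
The plan is to follow the three-step scheme that Theorems \ref{Theorem, fractional maximal with homogeneous kernel}--\ref{Theorem, fractional operator with homogeneous kernel} exemplify, but to upgrade the final convergence to the type-1 (Lorentz-norm) sense by exploiting the stronger $L^{n/(n-\alpha)}$-Dini hypothesis. Set $K(x):=\Omega(x)/|x|^{n-\alpha}$ and $q:=n/(n-\alpha)$. For (1), I would approximate $\delta_0$ by the normalized indicators $f_\varepsilon:=|B(0,\varepsilon)|^{-1}\chi_{B(0,\varepsilon)}$; the $L^1\to L^{q,\infty}$ hypothesis gives $\|T_\Omega^\alpha f_\varepsilon\|_{L^{q,\infty}}\le\|T_\Omega^\alpha\|_{L^1\to L^{q,\infty}}$, while the Lebesgue differentiation theorem yields $T_\Omega^\alpha f_\varepsilon(x)\to K(x)$ at every Lebesgue point of $K$ (with $|x|>0$). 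The Fatou property of weak-$L^q$ then yields (1). For (2), a polar-coordinate computation of the distribution function of $K$ produces the identity $\|K\|_{L^{q,\infty}}^q=n^{-1}\|\Omega\|_{L^q(\mathbb{S}^{n-1})}^q$, so (2) follows from (1).

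The heart of the argument is (3). After the change of variable $y\mapsto ty$,
\begin{equation*}
T_\Omega^\alpha V_t(x)-K(x)V(\mathbb{R}^n)=\int_{\mathbb{R}^n}\bigl[K(x-ty)-K(x)\bigr]\,dV(y).
\end{equation*}
Decompose $V=V_1+V_2$ with $V_1=V|_{B(0,R)}$ and $V_2(\mathbb{R}^n)<\varepsilon$, which is possible since $V$ is absolutely continuous and finite. The $V_2$-contribution is $\lesssim\varepsilon$ in $L^{q,\infty}(\mathbb{R}^n\setminus B(0,\rho))$ uniformly in $t$, using the operator bound on $T_\Omega^\alpha$ and part (1). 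For $V_1$, Minkowski's integral inequality in $L^q(\mathbb{R}^n\setminus B(0,\rho))$ combined with the embedding $L^q\hookrightarrow L^{q,\infty}$ reduces the matter to the shift estimate
\begin{equation*}
\|K(\cdot-h)-K(\cdot)\|_{L^q(|x|\ge\rho)}\longrightarrow 0\qquad\text{as }|h|\to 0,
\end{equation*}
since on $\mathrm{supp}(V_1)$ one has $|ty|\le tR\to 0$.

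This shift estimate is the principal obstacle. I would split $K(x-h)-K(x)$ into a numerator part $(\Omega(x-h)-\Omega(x))/|x-h|^{n-\alpha}$ and a denominator part $\Omega(x)(|x-h|^{-(n-\alpha)}-|x|^{-(n-\alpha)})$. The denominator part is controlled by the elementary bound $||x-h|^{-(n-\alpha)}-|x|^{-(n-\alpha)}|\lesssim|h|/|x|^{n-\alpha+1}$ together with polar coordinates and the identity $q(n-\alpha)=n$, giving a contribution of order $|h|^q\rho^{-q}\|\Omega\|_{L^q(\mathbb{S}^{n-1})}^q$. For the numerator part, substituting $y=x-h$ and writing $y=r\phi$ in polar coordinates, the homogeneity $\Omega(r\phi+h)=\Omega(\phi+h/r)$ converts the $L^q$-integral into
\begin{equation*}
\int_{\rho/2}^{\infty}\frac{1}{r}\int_{\mathbb{S}^{n-1}}|\Omega(\phi)-\Omega(\phi+h/r)|^q\,d\sigma(\phi)\,dr\le\int_{0}^{2|h|/\rho}\frac{\omega_q(u)^q}{u}\,du
\end{equation*}
after bounding the inner integral by $\omega_q(|h|/r)^q$ (per Definition \ref{definition, Dini condition}) and substituting $u=|h|/r$. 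Since $\omega_q$ is bounded by $2\|\Omega\|_{L^q(\mathbb{S}^{n-1})}$, one has $\omega_q(u)^q/u\le C\,\omega_q(u)/u$, which lies in $L^1(0,1)$ by the $L^q$-Dini hypothesis; dominated convergence then forces the vanishing as $|h|\to 0$. The delicate point is precisely the passage from the sphere integrand over $\phi$ to the modulus $\omega_q(|h|/r)$ and the identification of the resulting one-dimensional integral as a truncation of the Dini integral, which relies crucially on the exponent balance $q(n-\alpha)=n$ produced by the $L^{n/(n-\alpha)}$-Dini choice.
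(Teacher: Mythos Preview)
Your proposal is correct and, for the crucial part (3), follows essentially the same route as the paper: split the kernel difference $K(x-h)-K(x)$ into an angular piece $(\Omega(x-h)-\Omega(x))/|x-h|^{n-\alpha}$ and a radial piece, pass to polar coordinates using $q(n-\alpha)=n$, and recognize the resulting one-dimensional integral $\int_0^{c|h|/\rho}\omega_q(u)^q u^{-1}\,du$ as dominated by the $L^q$-Dini integral. The paper carries this out by first invoking the pointwise bound established in Theorem \ref{Theorem, fractional operator with homogeneous kernel} and then the $L^q$-Dini computation from the proof of Theorem \ref{Theorem, fractional maximal with homogeneous kernel, Lq-Dini}; your direct shift-estimate computation is the same calculation repackaged. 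The only cosmetic difference is the decomposition of the measure: the paper splits $V_t=V_t\chi_{B(0,\sqrt t)}+V_t\chi_{B(0,\sqrt t)^c}$, while you split $V=V|_{B(0,R)}+V|_{B(0,R)^c}$ and then dilate; both isolate a compactly supported piece on which $|ty|$ is uniformly small.

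The one genuine methodological difference is in part (1). The paper obtains the norm bound $\|\Omega(\cdot)/|\cdot|^{n-\alpha}\|_{L^{q,\infty}}\lesssim\|T_\Omega^\alpha\|_{L^1\to L^{q,\infty}}$ by the level-set lower-bound machinery developed in the proofs of Theorems \ref{Theorem, fractional maximal with homogeneous kernel} and \ref{Theorem, fractional operator with homogeneous kernel}. Your approach---test on $f_\varepsilon=|B(0,\varepsilon)|^{-1}\chi_{B(0,\varepsilon)}$, use Lebesgue differentiation to get $T_\Omega^\alpha f_\varepsilon\to K$ a.e.\ off the origin, then apply the Fatou property of $L^{q,\infty}$---is shorter and more elementary, and it bypasses the distribution-function comparisons entirely. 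It works cleanly even for $\alpha=0$ since for $|x|>\varepsilon$ no principal value is needed. The paper's route, on the other hand, reuses machinery already in place and makes the connection to the limiting behavior explicit; yours is a self-contained soft argument.
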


\begin{remark}\label{remark, comparison, integral operator with homogeneous kernel}
Theorem \ref{Theorem, fractional operator with homogeneous kernel} shows that $\Omega\in L^{\frac{n}{n-\alpha}}$
is necessary if $T_{\Omega}^{\alpha}$ is $L^1\rightarrow L^{\frac{n}{n-\alpha},\infty}$ bounded
with $\Omega$ satisfying the $L^1_{\alpha}$-Dini condition.
Moreover, we obtain the corresponding limiting behavior in type-2 sense,
which is better than the previous results in Theorems D and E.
Furthermore, we establish the type-1 convergence if $\Omega$ satisfies the $L^{\frac{n}{n-\alpha}}$-Dini condition.
\end{remark}

\begin{remark}
From Theorems \ref{Theorem, fractional maximal with homogeneous kernel} to \ref{Theorem, fractional operator with homogeneous kernel},
one can find that the limiting behaviors of $M_{\Omega}^{\alpha}V_t$ and $T_{\Omega}^{\alpha}V_t$ are different.
More precisely, in the sense of type-2 (or type-3),
the limit of $M_{\Omega}^{\alpha}V_t$ is $\frac{|\Omega(x)|}{|x|^{n-\alpha}}V(\mathbb{R}^n)$,
but that of $T_{\Omega}^{\alpha}V_t$ is $\frac{\Omega(x)}{|x|^{n-\alpha}}V(\mathbb{R}^n)$.
Note that this phenomenon can not be observed from type-3 convergence as mentioned in Theorems B-E.
\end{remark}

\begin{remark}
Observe that $\sup_{r>0}\phi^{\alpha}_r(x)=\frac{\Omega(x)}{|x|^{n-\alpha}}$
if $\phi=|\Omega|\chi_{B(0,1)}$.
Thus, Theorem \ref{Theorem, fractional maximal with homogeneous kernel} actually has the same form as Theorem \ref{theorem, fractional maximal operator, measure}.
\end{remark}

Furthermore, as corollary, the following result gives a partial answer for
why the integral index $\frac{n}{n-\alpha}$ is optimal
in the study of boundedness of the fractional integral operators with homogeneous kernel.

\begin{corollary}\label{corollary, equivalent}
Suppose $\alpha\in (0,n)$, $\Omega$ satisfies the $L^1_{\alpha}$-Dini condition. Then the following three statements are equivalent:
\begin{enumerate}
  \item $\Omega\in L^{\frac{n}{n-\alpha}}(\mathbb{S}^{n-1})$.
  \item $M^{\alpha}_{\Omega}$ is bounded from $L^1(\mathbb{R}^n)$ to $L^{\frac{n}{n-\alpha},\infty}(\mathbb{R}^n)$.
  \item $T^{\alpha}_{\Omega}$ is bounded from $L^1(\mathbb{R}^n)$ to $L^{\frac{n}{n-\alpha},\infty}(\mathbb{R}^n)$.
\end{enumerate}
\end{corollary}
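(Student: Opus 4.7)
The plan is to prove the four implications (2)$\Rightarrow$(1), (3)$\Rightarrow$(1), (1)$\Rightarrow$(3), and (1)$\Rightarrow$(2). The two extraction-type implications (2)$\Rightarrow$(1) and (3)$\Rightarrow$(1) are immediate from conclusion~(ii) of Theorem~\ref{Theorem, fractional maximal with homogeneous kernel} and Theorem~\ref{Theorem, fractional operator with homogeneous kernel} respectively, which give the quantitative bound $\|\Omega\|_{L^{n/(n-\alpha)}(\mathbb{S}^{n-1})}\lesssim\|M_\Omega^\alpha\|_{L^1\to L^{n/(n-\alpha),\infty}}$ (and its analog for $T_\Omega^\alpha$), and both theorems apply under the standing $L^1_\alpha$-Dini hypothesis.

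For (1)$\Rightarrow$(3), I would introduce the convolution kernel $K(x):=\Omega(x)/|x|^{n-\alpha}$, well-defined on $\mathbb{R}^n\setminus\{0\}$ by the $0$-homogeneity of $\Omega$, so that $T_\Omega^\alpha f=f\ast K$. A polar-coordinates computation of the distribution function gives
\begin{equation*}
\big|\{x\in\mathbb{R}^n:|K(x)|>\lambda\}\big|
=\frac{1}{n\,\lambda^{n/(n-\alpha)}}\,\|\Omega\|_{L^{n/(n-\alpha)}(\mathbb{S}^{n-1})}^{n/(n-\alpha)},
\end{equation*}
so $K\in L^{n/(n-\alpha),\infty}(\mathbb{R}^n)$ with quasinorm controlled by $\|\Omega\|_{L^{n/(n-\alpha)}(\mathbb{S}^{n-1})}$. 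The conclusion then follows from the weak-type Young inequality $\|f\ast K\|_{L^{q,\infty}}\lesssim\|f\|_{L^1}\|K\|_{L^{q,\infty}}$, which holds for $1<q<\infty$ and is a routine layer-cake argument: split $K$ into its small and large parts at the threshold $\lambda/(2\|f\|_{L^1})$, bound the small part in $L^\infty$ (absorbing half of $\lambda$) and the large part in $L^1$ via the weak-$L^q$ estimate for $K$. The restriction $\alpha\in(0,n)$ enters precisely here, to keep the exponent $q=n/(n-\alpha)$ strictly greater than $1$.

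For (1)$\Rightarrow$(2), I would reduce to the case just treated via the pointwise domination $M_\Omega^\alpha f(x)\leq T_{|\Omega|}^\alpha|f|(x)$, which follows because for $y\in B(x,r)$ one has $|x-y|\leq r$, hence $r^{-(n-\alpha)}\leq|x-y|^{-(n-\alpha)}$; plugging this estimate into the defining supremum and then taking the sup over $r>0$ yields the inequality. Since $|\Omega|\in L^{n/(n-\alpha)}(\mathbb{S}^{n-1})$ iff $\Omega\in L^{n/(n-\alpha)}(\mathbb{S}^{n-1})$, applying (1)$\Rightarrow$(3) with $|\Omega|$ in place of $\Omega$ closes the loop.

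I do not expect any significant technical obstacle: the sophisticated direction (operator boundedness implies the size condition on the sphere) is already packaged in the previously stated theorems, and the forward direction rests on the standard weak-type Young inequality together with a one-line pointwise bound relating the fractional maximal operator to the fractional integral. The only point requiring care is that the $L^1_\alpha$-Dini hypothesis is used only for the two reverse implications; in the forward direction, $\alpha>0$ alone suffices, since its only role is to place the exponent $n/(n-\alpha)$ in the admissible range for the convolution inequality.
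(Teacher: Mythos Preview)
Your proposal is correct and follows essentially the same approach as the paper: the reverse implications (2)$\Rightarrow$(1) and (3)$\Rightarrow$(1) are drawn from conclusion (2) of Theorems~\ref{Theorem, fractional maximal with homogeneous kernel} and~\ref{Theorem, fractional operator with homogeneous kernel}, and the forward implications (1)$\Rightarrow$(2),(3) are obtained via the weak Young inequality (Lemma~\ref{Lemma, weak Young's inequality}) together with the pointwise domination $M_\Omega^\alpha\le T_{|\Omega|}^\alpha$, after noting that $\Omega\in L^{n/(n-\alpha)}(\mathbb{S}^{n-1})$ implies $\Omega(x)/|x|^{n-\alpha}\in L^{n/(n-\alpha),\infty}$ (this is exactly Property~(B) in the paper). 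The only cosmetic difference is that you supply a sketch of the weak Young inequality, whereas the paper simply cites it.
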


This paper is organized as follows. In Section 2, we deal with the limiting behaviors for a wide class of maximal functions.
The limiting behaviors for the maximal operators associated with homogeneous kernels will be considered in Section 3.
Next, we establish the limiting behaviors of singular and fractional integral operators and the maximal truncated singular integrals in Section 4.
Finally, we give some limiting behaviors for the general convolution operators in Section 5.

\section{Maximal operator associated with radial functions}
In order to distinguish the various kinds of limiting behaviors, we first establish the following proposition.
In this paper, all the limiting behaviors can be compared each other in the framework of this proposition.

\begin{proposition}\label{proposition, comparison for limiting behavior}
  Let $0<p<\infty$. Suppose that $f\in L^{p,\infty}(\mathbb{R}^n)$, and
  $|\{x\in \mathbb{R}^n: |f(x)|=\lambda\}|=0$ for all $\lambda > 0$.
  Let $\{f_{(t)}\}_{t>0}$ be a sequence of measurable functions.
  Then for the following three statements:
  \begin{enumerate}
  \item $\forall\varepsilon>0$, $\exists A_{\varepsilon}\subset \mathbb{R}^n$, s.t.,
  $|A_{\varepsilon}|<\varepsilon$ and $\displaystyle\lim_{t\rightarrow 0^{+}}\|f-f_{(t)}\|_{L^{p,\infty}(\mathbb{R}^n\backslash A_{\varepsilon})}=0$.
  \item $\displaystyle\lim_{t\rightarrow 0^{+}}|\{x\in \mathbb{R}^n:\, |f_{(t)}(x)-f(x)|>\lambda\}|=0$,\, $\forall\,\lambda>0$,
  \item $\displaystyle\lim_{t\rightarrow 0^{+}}|\{x\in \mathbb{R}^n: \,|f_{(t)}(x)|>\lambda\}|=|\{x\in \mathbb{R}^n: |f(x)|>\lambda\}|$,\, $\forall\,\lambda>0$,
\end{enumerate}
we have
\begin{equation*}
  (1)\Rightarrow (2) \Rightarrow (3),\ (3)\nRightarrow (2),\ (2)\nRightarrow (1).
\end{equation*}
\end{proposition}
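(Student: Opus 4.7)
My plan is to prove each implication by a Chebyshev-plus-sandwich distribution-function argument, then exhibit two very simple counterexamples to rule out the reverse directions.

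For $(1)\Rightarrow (2)$, I fix $\lambda,\varepsilon>0$, invoke the set $A_\varepsilon$ from (1), and split
\[
|\{|f-f_{(t)}|>\lambda\}| \;\le\; |A_\varepsilon| \;+\; |\{x\in\mathbb{R}^n\setminus A_\varepsilon:\,|f-f_{(t)}|>\lambda\}| \;\le\; \varepsilon + \lambda^{-p}\|f-f_{(t)}\|_{L^{p,\infty}(\mathbb{R}^n\setminus A_\varepsilon)}^p.
\]
Letting $t\to 0^+$ controls the second term by (1), so $\limsup_{t\to 0^+}|\{|f-f_{(t)}|>\lambda\}|\le \varepsilon$, and (2) follows from the arbitrariness of $\varepsilon$.

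For $(2)\Rightarrow (3)$, the triangle inequality yields, for any $\eta>0$, the two inclusions
\[
\{|f_{(t)}|>\lambda\}\subset \{|f|>\lambda-\eta\}\cup\{|f_{(t)}-f|>\eta\},\qquad \{|f|>\lambda+\eta\}\subset \{|f_{(t)}|>\lambda\}\cup\{|f_{(t)}-f|>\eta\}.
\]
Taking measures, using (2) to annihilate the error term $|\{|f_{(t)}-f|>\eta\}|$, and passing to $\limsup/\liminf$ as $t\to 0^+$ squeezes $|\{|f_{(t)}|>\lambda\}|$ between $|\{|f|>\lambda+\eta\}|$ and $|\{|f|>\lambda-\eta\}|$. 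Then, because $f\in L^{p,\infty}$ makes these measures finite (for $\eta<\lambda$) and because $|\{|f|=\lambda\}|=0$, continuity of Lebesgue measure forces both ends to converge to $|\{|f|>\lambda\}|$ as $\eta\to 0^+$, giving (3).

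For the two non-implications, I rely on the fact that (3) is insensitive to the sign of $f_{(t)}$ and that (1) is not implied by convergence in measure when the function is spread over an infinite-measure region. Concretely, taking $f(x)=|x|^{-n/p}$ (so $f\in L^{p,\infty}$ and $|\{|f|=\lambda\}|=0$) and the constant-in-$t$ sequence $f_{(t)}:=-f$ gives $|f_{(t)}|\equiv |f|$, so (3) is trivial, while $|\{|f_{(t)}-f|>\lambda\}|=|\{2|f|>\lambda\}|$ is a fixed positive number independent of $t$, disproving $(3)\Rightarrow (2)$. To refute $(2)\Rightarrow (1)$, I choose $f\equiv 0$ and the constant functions $f_{(t)}(x)\equiv t$; then $|\{|f_{(t)}-f|>\lambda\}|=0$ as soon as $t\le\lambda$, so (2) holds, while for any $A_\varepsilon$ of finite measure the complement $\mathbb{R}^n\setminus A_\varepsilon$ has infinite measure, forcing $\|f_{(t)}\|_{L^{p,\infty}(\mathbb{R}^n\setminus A_\varepsilon)}=+\infty$ for every $t>0$, so (1) fails.

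The main obstacle, and really the only subtle point, lies in the $\eta\to 0^+$ limit inside $(2)\Rightarrow (3)$: it is precisely here that both structural hypotheses on $f$ are unavoidable. The integrability $f\in L^{p,\infty}$ provides the finite-measure condition required to apply continuity of measure from above on the decreasing family $\{|f|>\lambda-\eta\}$, and the assumption $|\{|f|=\lambda\}|=0$ is what collapses the outer measure $|\{|f|\ge\lambda\}|$ onto $|\{|f|>\lambda\}|$; dropping either would leave a genuine gap between the $\liminf$ and $\limsup$ of $|\{|f_{(t)}|>\lambda\}|$ and the desired target.
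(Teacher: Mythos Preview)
Your proof is correct and, for the two forward implications $(1)\Rightarrow(2)$ and $(2)\Rightarrow(3)$, it follows exactly the same Chebyshev/distribution-function sandwich argument as the paper. The only real difference lies in the counterexamples: the paper omits the details for $(3)\nRightarrow(2)$, while you give the clean sign-flip $f_{(t)}=-f$ with $f(x)=|x|^{-n/p}$; and for $(2)\nRightarrow(1)$ the paper truncates $|x|^{-n/p}$ to $B(0,t^{-1})$, whereas you take $f\equiv 0$ and $f_{(t)}\equiv t$. Both of your examples are valid and somewhat simpler than the paper's choices---your $(2)\nRightarrow(1)$ example in particular exploits directly that a nonzero constant has infinite $L^{p,\infty}$ norm on an infinite-measure set, avoiding the need to analyse the tail of $|x|^{-n/p}$.
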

\begin{proof}
We first verify $(1)\Rightarrow (2)$.
For any $\varepsilon>0$, there exists a set $|A_{\varepsilon}|<\varepsilon$ such that
$$\lim_{t\rightarrow 0^{+}}\|f-f_{(t)}\|_{L^{p,\infty}(\mathbb{R}^n\backslash A_{\varepsilon})}=0.$$
This implies that
  \begin{equation*}
    \begin{split}
      |\{x\in \mathbb{R}^n: |f_{(t)}(x)-f(x)|>\lambda\}|
     & \leq
      |\{x\in A^c_{\varepsilon}: |f_{(t)}(x)-f(x)|>\lambda\}|+|A_{\varepsilon}|
      \\
      &<
      \frac{\|f_{(t)}-f\|^p_{L^{p,\infty}(\mathbb{R}^n\backslash A_{\epsilon})}}{\lambda^p}+|A_{\varepsilon}|.
    \end{split}
  \end{equation*}
Thus,
  \begin{equation*}
     \varlimsup_{t\rightarrow 0^{+}}|\{x\in \mathbb{R}^n: |f_{(t)}(x)-f(x)|>\lambda\}|
    \leq
    |A_{\varepsilon}|<\varepsilon.
  \end{equation*}
  By the arbitrary of $\varepsilon$, we obtain conclusion (2).

Next, we show that $(2)\Rightarrow (3)$. For a small constant $\nu\in (0,1)$, we have
  \begin{equation*}
    \begin{split}
      |\{x\in \mathbb{R}^n: |f_{(t)}(x)|>\lambda\}|
            \leq
      |\{x\in \mathbb{R}^n: |f(x)|>(1-\nu)\lambda\}|+
      |\{x\in \mathbb{R}^n: |f_{(t)}(x)-f(x)|>\nu\lambda\}|.
    \end{split}
  \end{equation*}
  By (2), we have $\lim_{t\rightarrow 0^{+}}|\{x\in \mathbb{R}^n: |f_{(t)}(x)-f(x)|>\nu\lambda\}|=0$.
  This implies that
  \begin{equation*}
    \varlimsup_{t\rightarrow 0}|\{x\in \mathbb{R}^n: |f_{(t)}(x)|>\lambda\}|
    \leq|\{x\in \mathbb{R}^n: |f(x)|>(1-\nu)\lambda\}|.
  \end{equation*}
  Letting $\nu\rightarrow 0$, we have
  \begin{equation}\label{proposition, for proof 1}
    \begin{split}
      \varlimsup_{t\rightarrow 0}|\{x\in \mathbb{R}^n: |f_{(t)}(x)|>\lambda\}|
      \leq &|\{x\in \mathbb{R}^n: |f(x)|\geq \lambda\}|
      \\
      = & |\{x\in \mathbb{R}^n: |f(x)|> \lambda\}|,
    \end{split}
  \end{equation}
  where in the last equality we use the fact $|\{x\in \mathbb{R}^n: |f(x)|= \lambda\}|=0$.

  On the other hand,
  \begin{equation*}
    \begin{split}
      |\{x\in \mathbb{R}^n: |f_{(t)}(x)|>\lambda\}|
            \geq
      |\{x\in \mathbb{R}^n: |f(x)|>(1+\nu)\lambda\}|
      -
      |\{x\in \mathbb{R}^n: |f_{(t)}(x)-f(x)|>\nu\lambda\}|
    \end{split}
  \end{equation*}
  Using the assumption (2), we obtain that
  \begin{equation*}
    \varliminf_{t\rightarrow 0}|\{x\in \mathbb{R}^n: |f_{(t)}(x)|>\lambda\}|
    \geq |\{x\in \mathbb{R}^n: |f(x)|>(1+\nu)\lambda\}|.
  \end{equation*}
  Letting $\nu\rightarrow 0$, we have
  \begin{equation}\label{proposition, for proof 2}
    \begin{split}
      \varliminf_{t\rightarrow 0}|\{x\in \mathbb{R}^n: |f_{(t)}(x)|>\lambda\}|
      \geq  |\{x\in \mathbb{R}^n: |f(x)|> \lambda\}|.
    \end{split}
  \end{equation}
  The combination of (\ref{proposition, for proof 1}) and (\ref{proposition, for proof 2}) yields the desired conclusion (3).

  The proof of that $(3)\nRightarrow (2)$ is simple, we omit the detail here.

  Finally, we show that $(2)\nRightarrow (1)$. Let $g(x)=|x|^{-n/p}$, $g_{(t)}(x)=g(x)\chi_{B(0,t^{-1})}(x)$.
  Obviously, $g\in L^{p,\infty}(\mathbb{R}^n)$. For any fixed $\lambda>0$, we have
  \begin{equation*}
    \{x\in \mathbb{R}^n: |g(x)-g_{(t)}(x)|>\lambda\}=\emptyset\, \ \text{for sufficient small}\ t.
  \end{equation*}
  Thus, $$\lim_{t\rightarrow 0^{+}}|\{x\in \mathbb{R}^n: |g_{(t)}(x)-g(x)|>\lambda\}|=0.$$
  However, for any $\epsilon>0$, $|A_{\epsilon}|<\epsilon$, $t>0$, if we take $\lambda$ sufficiently small, then
  \begin{equation*}
    |\{x\in \mathbb{R}^n: |g(x)-g_{(t)}(x)|>\lambda\}|
    = \{x\in B^c(0,t^{-1}): |x|^{-n/p}>\lambda\}
    \sim \lambda^{-p}.
  \end{equation*}
  Consequently, as $\lambda\rightarrow 0$,
  \begin{equation*}
    |\{x\in A^c_{\epsilon}: |g(x)-g_{(t)}(x)|>\lambda\}|\gtrsim |\{x\in \mathbb{R}^n: |g(x)-g_{(t)}(x)|>\lambda\}|-|A_{\epsilon}|\sim \lambda^{-p}.
  \end{equation*}
  This implies that
  \begin{equation*}
    \|g-g_{(t)}\|_{L^{p,\infty}(\mathbb{R}^n\backslash A_{\varepsilon})}
    =
    \sup_{\lambda>0}\lambda\cdot |\{x\in A^c_{\epsilon}: |g(x)-g_{(t)}(x)|>\lambda\}|^{1/p}
    \gtrsim 1
    \ \text{for every}\ t>0,
  \end{equation*}
  which leads to a contradiction with (1). Proposition \ref{proposition, comparison for limiting behavior} is proved.
\end{proof}

\begin{remark}
  For the sake of brevity, for $i=1,\,2,\,3$, we say that a sequence of functions $f_{(t)}$ tends to $f$
  in the type-$i$ sense,
  if (i) is valid as in Proposition \ref{proposition, comparison for limiting behavior}.
\end{remark}

\medskip

{\it Proof of Theorem \ref{theorem, fractional maximal operator, measure}.}\quad
Without loss of generality, we may assume $V$ is a probability measure, that is, $V(\mathbb{R}^n)=1$.
  For $t>0$, denote
\begin{equation*}
  dV^1_t:=dV_t\chi_{B(0,r_t)},\hspace{6mm}dV^2_t:=dV_t\chi_{B^c(0,r_t)}
\end{equation*}
where $r_t=\sqrt{t}$. By the definition of $V_t$, we obtain
\begin{equation*}
  V^2_t(\mathbb{R}^n)=V_t(B^c(0,r_t))=1-V_t(B(0,r_t))=1-V(B(0,t^{-1/2}))=: \epsilon_t \rightarrow 0^+
\end{equation*}
as $t\rightarrow 0^+$.
Then $V^1_t(\mathbb{R}^n)=1-\epsilon_t$. By the quasi-triangle inequality for $L^{\frac{n}{n-\alpha},\infty}$,
and using the boundedness of $M^{\alpha}_{\phi}$,
we deduce that
\begin{equation}\label{l2.3}
  \begin{split}
    \|M^{\alpha}_{\phi}(V_t)(\cdot)&-\sup_{r>0}\phi^{\alpha}_r(\cdot)\|_{L^{\frac{n}{n-\alpha},\infty}(\mathbb{R}^n\backslash B(0,\rho))}
    \\
    &\lesssim
    \|\mathcal {M}^{\alpha}_{\phi}(V^1_t)(\cdot)-\sup_{r>0}\phi^{\alpha}_r(\cdot)\|_{L^{\frac{n}{n-\alpha},\infty}(\mathbb{R}^n\backslash B(0,\rho))}
    +\|\mathcal {M}^{\alpha}_{\phi}(V^2_t)(\cdot)\|_{L^{\frac{n}{n-\alpha},\infty}(\mathbb{R}^n\backslash B(0,\rho))}
    \\
    &\lesssim
    \|\mathcal {M}^{\alpha}_{\phi}(V^1_t)(\cdot)-\sup_{r>0}\phi^{\alpha}_r(\cdot)\|_{L^{\frac{n}{n-\alpha},\infty}(\mathbb{R}^n\backslash B(0,\rho))}
    +V^2_t(\mathbb{R}^n)
    \\
   &=
    \|\mathcal {M}^{\alpha}_{\phi}(V^1_t)(\cdot)-\sup_{r>0}\phi^{\alpha}_r(\cdot)\|_{L^{\frac{n}{n-\alpha},\infty}(\mathbb{R}^n\backslash B(0,\rho))}
    +\epsilon_t.
  \end{split}
\end{equation}
Set
\begin{equation*}
  A_{t,\rho}^{\lambda}:=\Big\{x\in B^c(0,\rho): \Big|\mathcal {M}^{\alpha}_{\phi}(V^1_t)(x)-\sup_{r>0}\phi_r(x)\Big|>\lambda\Big\}.
\end{equation*}
For $x\in A_{t,\rho}^{\lambda}$ and sufficient small $t$ such that $r_t<\rho/2$, we have
\begin{equation*}
  \begin{split}
    \mathcal {M}_{\phi}(V^1_t)(x)-\sup_{r>0}\phi^{\alpha}_r(x)
    = &
    \sup_{r>0}\frac{1}{r^{n-\alpha}}\int_{\mathbb{R}^n}\phi(\frac{x-y}{r})dV^1_t(y)-\sup_{r>0}\phi^{\alpha}_r(x)
    \\
    \leq &
    \sup_{r>0}\frac{1}{r^{n-\alpha}}\int_{\mathbb{R}^n}\phi(\frac{|x|-r_t}{r})dV^1_t(y)-\sup_{r>0}\phi^{\alpha}_r(x)
    \\
    = &
    \left(\frac{\rho}{\rho-r_t}\right)^{n-\alpha}\sup_{r>0}\frac{1}{r^{n-\alpha}}\int_{\mathbb{R}^n}\phi(\frac{x}{r})dV^1_t(y)
    -\sup_{r>0}\phi^{\alpha}_r(x)
    \\
    \leq &
    \left(\left(\frac{\rho}{\rho-r_t}\right)^{n-\alpha}-1\right)\sup_{r>0}\phi^{\alpha}_r(x).
  \end{split}
\end{equation*}
Also, for the oppositive direction we have
\begin{equation}
  \begin{split}
    \mathcal {M}_{\phi}(V^1_t)(x)-\sup_{r>0}\phi^{\alpha}_r(x)
    = &
    \sup_{r>0}\frac{1}{r^{n-\alpha}}\int_{\mathbb{R}^n}\phi(\frac{x-y}{r})dV^1_t(y)-\sup_{r>0}\phi^{\alpha}_r(x)
    \\
    \geq &
    \sup_{r>0}\frac{1}{r^{n-\alpha}}\int_{\mathbb{R}^n}\phi(\frac{|x|+r_t}{r})dV^1_t(y)-\sup_{r>0}\phi^{\alpha}_r(x)
    \\
    = &
    \left(\frac{|x|}{|x|+r_t}\right)^{n-\alpha}\sup_{r>0}\frac{1}{r^{n-\alpha}}\int_{\mathbb{R}^n}\phi(\frac{x}{r})dV^1_t(y)
    -\sup_{r>0}\phi^{\alpha}_r(x)
    \\
    \geq &
    \left(\left(\frac{\rho}{\rho+r_t}\right)^{n-\alpha}V_t^1(\mathbb{R}^n)-1\right)\sup_{r>0}\phi^{\alpha}_r(x)
    \\
    = &
    \left(\left(\frac{\rho}{\rho+r_t}\right)^{n-\alpha}(1-\epsilon_t)-1\right)\sup_{r>0}\phi^{\alpha}_r(x)
  \end{split}
\end{equation}
Thus, for $x\in A_{t,\rho}^{\lambda}$,
\begin{equation*}
  \Big|\mathcal {M}_{\phi}(V_t)(x)-\sup_{r>0}\phi^{\alpha}_r(x)\Big|\leq \beta_t\sup_{r>0}\phi^{\alpha}_r(x),
\end{equation*}
where $\beta_t\rightarrow 0^+$ as $t\rightarrow 0^+$.
This implies that
\begin{equation}\label{l2.4}
  \begin{split}
   \Big\|\mathcal {M}^{\alpha}_{\phi}(V^1_t)(\cdot)-\sup_{r>0}\phi^{\alpha}_r(\cdot)\Big\|_{L^{\frac{n}{n-\alpha},\infty}(\mathbb{R}^n\backslash B(0,\rho))}
    \leq
    \beta_t\|\sup_{r>0}\phi^{\alpha}_r\|_{L^{\frac{n}{n-\alpha},\infty}}.
  \end{split}
\end{equation}
Note that
\begin{equation*}
  \begin{split}
    \sup_{r>0}\phi^{\alpha}_r(x)
   & =     \sup_{r>0}\frac{1}{r^{n-\alpha}}\phi(\frac{x}{r})
   =
\frac{1}{|x|^{n-\alpha}}\cdot \sup_{r>0}\frac{1}{(r/|x|)^{n-\alpha}}\phi(\frac{x'}{r/|x|})
    \\
    &=
    \frac{1}{|x|^{n-\alpha}}\sup_{r>0}\phi^{\alpha}_r(x')
    =
    \frac{1}{|x|^{n-\alpha}}\sup_{r>0}\phi^{\alpha}_r(e_1).
  \end{split}
\end{equation*}
We have
\begin{equation*}
  \begin{split}
    \beta_t\Big\|\sup_{r>0}\phi^{\alpha}_r\Big\|_{L^{\frac{n}{n-\alpha},\infty}}
    =
    \beta_t\sup_{r>0}\phi^{\alpha}_r(e_1)\left\|\frac{1}{|\cdot|^{n-\alpha}}\right\|_{L^{\frac{n}{n-\alpha},\infty}}
    \lesssim \beta_t.
  \end{split}
\end{equation*}
This together with (\ref{l2.3}) and (\ref{l2.4}) yields that
\begin{equation*}
  \begin{split}
    \Big\|\mathcal {M}^{\alpha}_{\varphi}(V_t)(\cdot)-&\sup_{r>0}\phi^{\alpha}_r(\cdot)\Big\|_{L^{\frac{n}{n-\alpha},\infty}(\mathbb{R}^n\backslash B(0,\rho))}
    \\
    &\lesssim
    \Big\|\mathcal {M}^{\alpha}_{\varphi}(V^1_t)(\cdot)-\sup_{r>0}\phi^{\alpha}_r(\cdot)\Big\|_{L^{\frac{n}{n-\alpha},\infty}(\mathbb{R}^n\backslash B(0,\rho))}
    +\epsilon_t
    \lesssim
    \beta_t+\epsilon_t\rightarrow 0 \ \text{as}\ t\rightarrow 0^+,
  \end{split}
\end{equation*}
which is the desired conclusion and completes the proof of Theorem \ref{theorem, fractional maximal operator, measure}. $\hfill\Box$

\medskip

{\it Proof of Corollary \ref{corollary, fractional maximal operator, measure}}.\quad
Without loss of generality, we assume $\text{supp} \phi\subset B(0,1)$ and $\|\phi\|_{L^{\infty}}=1$. Then
  \begin{equation*}
    \sup_{r>0}\phi^{\alpha}_r(e_1)
    =
    \sup_{r>0}\frac{1}{r^{n-\alpha}}\phi(\frac{e_1}{r})
    =
    \sup_{r>1}\frac{1}{r^{n-\alpha}}\phi(\frac{e_1}{r})
    \leq
    1.
  \end{equation*}
In order to use Theorem \ref{theorem, fractional maximal operator, measure},
we only need to verify
$\|\mathcal {M}_{\phi}^{\alpha}\mu\|_{L^{\frac{n}{n-\alpha},\infty}}\lesssim \mu(\mathbb{R}^n)$ for all positive measure $\mu$.
Since $M_{\phi}^{\alpha}\mu \lesssim M^{\alpha}\mu$, it suffices to show that
\begin{equation}\label{for proof, 1}
  \|\mathcal {M}^{\alpha}\mu\|_{L^{\frac{n}{n-\alpha},\infty}}\lesssim \mu(\mathbb{R}^n).
\end{equation}
Write $A_{\lambda}:=\{x\in \mathbb{R}^n: M^{\alpha}\mu(x)>\lambda\}$.
For any $x\in A_{\lambda}$, we can find a ball $B(x,r_x)$, satisfying that
\begin{equation*}
  \frac{1}{r_x^{n-\alpha}}\int_{B(x,r_x)}d\mu(y)>\lambda.
\end{equation*}
This implies that $|B(x,r_x)|^{\frac{n-\alpha}{n}}\lesssim \frac{1}{\lambda}\int_{B(x,r_x)}d\mu(y)$.
Obviously, $A_{\lambda}\subset \bigcup_{x\in A_{\lambda}}B(x,r_x)$.
Using the Wiener Covering Lemma, there exists disjoint collection of such balls $B_i=B(x_i,r_{x_i})$ such that
$A_{\lambda}\subset \bigcup 5 B(x_i,r_{x_i})$.
Therefore,
\begin{equation*}
  \begin{split}
    |A_{\lambda}|\lesssim\sum_i|B_i|&\lesssim \left(\sum_i|B_i|^{\frac{n-\alpha}{n}}\right)^{\frac{n}{n-\alpha}}
    \lesssim \left(\sum_i\frac{1}{\lambda}\int_{B_i}d\mu(y)\right)^{\frac{n}{n-\alpha}}\\
    &= \left(\frac{1}{\lambda}\int_{\bigcup_iB_i}d\mu(y)\right)^{\frac{n}{n-\alpha}}
    \lesssim
    \left(\frac{\mu(\mathbb{R}^n)}{\lambda}\right)^{\frac{n}{n-\alpha}}.
  \end{split}
\end{equation*}
This implies that
\begin{equation*}
  \lambda|A_{\lambda}|^{\frac{n-\alpha}{n}}\lesssim \mu(\mathbb{R}^n).
\end{equation*}
By the arbitrary of $\lambda>0$, we get (\ref{for proof, 1}) and then the desired conclusion follows from Theorem \ref{theorem, fractional maximal operator, measure}. Corollary \ref{corollary, fractional maximal operator, measure} is proved. $\hfill\Box$

\medskip

Furthermore, if the measure $V$ is assumed to be absolutely continuous (with respect to Lebesgue measure), we have following corollaries.

\begin{corollary}\label{corollary, maximal operator, function}
  Let $\phi(x)=\Phi(|x|)$ be a radial function, where $\Phi: [0,\infty)\rightarrow [0,\infty)$ is decreasing.
  Suppose that $\phi$ has a continuous integrable radially decreasing majorant.
  Then, for any fixed $\rho>0$, we have
  \begin{equation}
    \lim_{t\rightarrow0^{+}}
    \Big\|\mathcal {M}_{\phi}(V_t)(\cdot)-\sup_{r>0}\phi_r(\cdot)V(\mathbb{R}^n)\Big\|_{L^{1,\infty}(\mathbb{R}^n\backslash B(0,\rho))}=0
  \end{equation}
  for all finite positive absolutely continuous measure $V$.
\end{corollary}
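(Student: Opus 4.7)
The plan is to deduce this corollary as an immediate consequence of Theorem \ref{theorem, fractional maximal operator, measure} in the case $\alpha=0$. The work reduces to verifying the two hypotheses of that theorem, namely that $\sup_{r>0}\phi_r(e_1)<\infty$ and that $\|\mathcal{M}_\phi\mu\|_{L^{1,\infty}}\lesssim \mu(\mathbb{R}^n)$ for every finite positive measure $\mu$. Let $\Psi(y)=\tilde\Psi(|y|)$ denote the continuous integrable radially decreasing majorant of $\phi$, so that $\phi_r(e_1)\leq r^{-n}\Psi(e_1/r)$.

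For the first hypothesis I would bound $r^{-n}\Psi(e_1/r)$ in three regimes. For large $r$ the continuity of $\tilde\Psi$ at $0$ gives $r^{-n}\Psi(e_1/r)\leq \tilde\Psi(0)/r^n\to 0$. For small $r$, writing $\rho=1/r$, what is needed is $\rho^{n}\tilde\Psi(\rho)\to 0$ as $\rho\to\infty$; this follows from integrability combined with monotonicity, since
\begin{equation*}
\tilde\Psi(2\rho)\cdot\frac{(2^n-1)\rho^n}{n}\leq \int_\rho^{2\rho}\tilde\Psi(s)s^{n-1}\,ds\longrightarrow 0.
\end{equation*}
For $r$ in any compact subinterval of $(0,\infty)$ continuity of $\Psi$ on $\mathbb{R}^n\setminus\{0\}$ provides a uniform bound. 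Taken together these three regimes yield $\sup_{r>0}\phi_r(e_1)<\infty$.

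For the second hypothesis I would dominate $\mathcal{M}_\phi$ pointwise by the Hardy--Littlewood maximal operator $M$ via a layer-cake argument. Since $\Psi$ is radially decreasing each level set $\{\Psi>t\}$ is a ball $B(0,R(t))$, so Fubini gives
\begin{equation*}
\Psi_r\ast\mu(x)=\int_0^\infty \frac{1}{r^n}\mu(B(x,rR(t)))\,dt\leq M\mu(x)\int_0^\infty c_n R(t)^n\,dt=\|\Psi\|_{L^1}\,M\mu(x),
\end{equation*}
uniformly in $r>0$. Hence $\mathcal{M}_\phi\mu\leq \sup_{r>0}\Psi_r\ast\mu\lesssim M\mu$, and the weak-type $(1,1)$ bound for $M$ on finite positive measures, proved by the Wiener covering argument reproduced in the proof of Corollary \ref{corollary, fractional maximal operator, measure}, delivers the required estimate $\|\mathcal{M}_\phi\mu\|_{L^{1,\infty}}\lesssim\mu(\mathbb{R}^n)$.

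With both hypotheses verified, Theorem \ref{theorem, fractional maximal operator, measure} applies directly and yields the stated limit; in fact it even holds for all finite positive measures $V$, so the absolute continuity assumption is superfluous. The only mildly subtle step is the decay estimate $\rho^n\tilde\Psi(\rho)\to 0$, which is the one place where both the integrability and the monotonicity of the majorant are simultaneously needed.
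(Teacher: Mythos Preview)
Your proof is correct and follows the same overall strategy as the paper: both reduce the corollary to Theorem~\ref{theorem, fractional maximal operator, measure} (with $\alpha=0$) by verifying its two hypotheses. The implementations differ in detail. For the bound $\sup_{r>0}\phi_r(e_1)<\infty$, the paper exploits that $\phi$ itself is radially decreasing to write, in one line,
\[
\frac{1}{r^n}\phi\Big(\frac{e_1}{r}\Big)\sim\int_{B(0,r^{-1})}\phi\Big(\frac{e_1}{r}\Big)\,dx\leq\int_{B(0,r^{-1})}\phi(x)\,dx\leq\|K\|_{L^1},
\]
whereas you pass to the majorant $\Psi$ and argue in three regimes; both are valid, the paper's being shorter. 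For the pointwise domination $\mathcal{M}_\phi\mu\lesssim\mathcal{M}\mu$, the paper simply cites \cite[Corollary~2.1.12]{Grafakos_Classical_2008}, stated there for functions, which is why the corollary restricts to absolutely continuous $V$; your self-contained layer-cake argument applies to arbitrary finite positive measures, so your remark that the absolute continuity hypothesis is superfluous is justified and in fact slightly strengthens the paper's statement.
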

\begin{proof}
  Since $\phi$ has a continuous integrable radially decreasing majorant,
  we have $\mathcal {M}_{\phi}(\mu)(x)\lesssim \mathcal {M}(\mu)(x)$ for all absolutely continuous measure,
  thanks to \cite[Corollary 2.1.12]{Grafakos_Classical_2008}. Then
  \begin{equation*}
    \|\mathcal {M}_{\phi}\mu\|_{L^{1,\infty}}\lesssim \|\mathcal {M}\mu\|_{L^{1,\infty}}\lesssim \mu(\mathbb{R}^n).
  \end{equation*}
  Denote by $K$ the continuous integrable radially decreasing majorant,
  then $\phi(x)\leq K(x), x\in \mathbb{R}^n$.
  Thus,
  \begin{equation*}
    \begin{split}
      \sup_{r>0}\phi_r(e_1)=\sup_{r>0}\frac{1}{r^n}\phi(\frac{e_1}{r})
      \sim
      \int_{B(0,r^{-1})}\phi(\frac{e_1}{r})dx
      \lesssim
      \int_{B(0,r^{-1})}\phi(x)dx\lesssim \|K\|_{L^1}<\infty.
    \end{split}
  \end{equation*}
  The desired conclusion then follows from Theorem \ref{theorem, fractional maximal operator, measure}.
\end{proof}

\begin{corollary}\label{corollary, fractional maximal operator, function}
  Let $\alpha\in (0,n)$, $\phi(x)=\Phi(|x|)$ be a radial function such that $\sup_{r>0}\phi^{\alpha}_r(e_1)<\infty$,
  where $\Phi: [0,\infty)\rightarrow [0,\infty)$ is decreasing, $e_1=(1,0,\cdots,0)$ is the vector on the unit sphere $\mathbb{S}^{n-1}$.
  Then, for any fixed $\rho>0$, we have
  \begin{equation}
    \lim_{t\rightarrow0^{+}}
   \Big\|\mathcal {M}^{\alpha}_{\phi}(V_t)(\cdot)-\sup_{r>0}\phi^{\alpha}_r(\cdot)V(\mathbb{R}^n)\Big\|_{L^{\frac{n}{n-\alpha},\infty}(\mathbb{R}^n\backslash B(0,\rho))}=0
  \end{equation}
  for all finite positive absolutely continuous measure $V$.
\end{corollary}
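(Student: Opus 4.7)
The plan is to reduce the corollary to Theorem \ref{theorem, fractional maximal operator, measure}. All hypotheses of that theorem are already present in the corollary except the weak-type bound $\|\mathcal{M}^{\alpha}_{\phi}\mu\|_{L^{n/(n-\alpha),\infty}} \lesssim \mu(\mathbb{R}^n)$ for all positive measures $\mu$. A close reading of the proof of Theorem \ref{theorem, fractional maximal operator, measure} shows, however, that this bound is invoked just once, namely applied to the tail piece $V_t^{2}=V_t|_{B^{c}(0,\sqrt{t})}$ in inequality (\ref{l2.3}). Since $V$ is assumed absolutely continuous, so are $V_t$ and $V_t^{2}$; hence it suffices to establish the weak-type bound for the class of absolutely continuous positive measures, and then the proof of Theorem \ref{theorem, fractional maximal operator, measure} applies verbatim.

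The first step is to extract from $\sup_{r>0}\phi^{\alpha}_{r}(e_1)<\infty$ a pointwise Riesz-type majorant of $\phi$. Writing $\phi^{\alpha}_{r}(e_1)=r^{\alpha-n}\Phi(1/r)$ and substituting $u=1/r$ turns the hypothesis into $\sup_{u>0}u^{n-\alpha}\Phi(u)=M<\infty$, which gives $\Phi(u)\le Mu^{\alpha-n}$ and hence $\phi(z)\le M|z|^{\alpha-n}$ on $\mathbb{R}^{n}\setminus\{0\}$. Rescaling yields $r^{\alpha-n}\phi((x-y)/r)\le M|x-y|^{\alpha-n}$ uniformly in $r>0$, so for any absolutely continuous $d\mu=f\,dx$ with $f\ge 0$ in $L^{1}$ we obtain the pointwise domination
\[
\mathcal{M}^{\alpha}_{\phi}\mu(x)\le M\,I_{\alpha}f(x),\qquad I_{\alpha}f(x):=\int_{\mathbb{R}^{n}}|x-y|^{\alpha-n}f(y)\,dy.
\]

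The second step is to apply the classical endpoint Hardy--Littlewood--Sobolev inequality $\|I_{\alpha}f\|_{L^{n/(n-\alpha),\infty}}\lesssim \|f\|_{L^{1}}$, valid precisely because $\alpha\in(0,n)$. Combined with the pointwise bound this gives $\|\mathcal{M}^{\alpha}_{\phi}\mu\|_{L^{n/(n-\alpha),\infty}}\lesssim \mu(\mathbb{R}^{n})$ for every absolutely continuous positive $\mu$, which is all that is needed to run the proof of Theorem \ref{theorem, fractional maximal operator, measure} on $V$ and conclude. I do not expect any substantial obstacle: the whole argument is a short bootstrap from a pointwise Riesz majorant plus the standard weak-type endpoint of HLS. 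The only genuinely delicate point is the observation that the stated hypothesis of Theorem \ref{theorem, fractional maximal operator, measure} is only actually used on the absolutely continuous class. The exclusion of $\alpha=0$ (relative to Corollary \ref{corollary, fractional maximal operator, measure}) is precisely what this approach forces, since for $\alpha=0$ the would-be Riesz majorant $|z|^{-n}$ admits no analogous $L^{1}\to L^{1,\infty}$ weak-type bound.
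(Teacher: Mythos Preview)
Your proposal is correct and follows essentially the same approach as the paper: both arguments extract the pointwise bound $\phi(z)\lesssim |z|^{\alpha-n}$ from $\sup_{r>0}\phi^{\alpha}_{r}(e_1)<\infty$, dominate $\mathcal{M}^{\alpha}_{\phi}\mu$ by the Riesz potential $I_{\alpha}\mu$, invoke the $L^{1}\to L^{n/(n-\alpha),\infty}$ endpoint for $I_{\alpha}$, and then appeal to Theorem~\ref{theorem, fractional maximal operator, measure}. Your explicit remark that the weak-type hypothesis of Theorem~\ref{theorem, fractional maximal operator, measure} is only ever applied to the absolutely continuous tail $V_t^{2}$ is a nice point of care that the paper leaves implicit.
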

\begin{proof}
  Since $\sup_{r>0}\phi^{\alpha}_r(e_1)<\infty$, we have
  \begin{equation*}
    \begin{split}
      \phi(x)=\phi(\frac{e_1}{1/|x|})=\frac{1}{|x|^{n-\alpha}}\frac{1}{1/|x|^{n-\alpha}}\phi(\frac{e_1}{1/|x|})
      \lesssim \frac{1}{|x|^{n-\alpha}}\sup_{r>0}\phi^{\alpha}_r(e_1).
    \end{split}
  \end{equation*}
  Then,
  \begin{equation*}
    \begin{split}
      \mathcal {M}^{\alpha}_{\phi}(\mu)(x)
    = &
    \sup_{r>0}\frac{1}{r^{n-\alpha}}\int_{\mathbb{R}^n}\phi(\frac{x-y}{r})d\mu(y)
    \\
    \lesssim &
    \int_{\mathbb{R}^n}\frac{1}{|x-y|^{n-\alpha}}d\mu(y)\sim I_{\alpha}(\mu)(x)
    \end{split}
  \end{equation*}
for all absolutely continuous measure.
  Thus,
  \begin{equation*}
    \|\mathcal {M}^{\alpha}_{\phi}(\mu)(\cdot)\|_{L^{\frac{n}{n-\alpha},\infty}}
    \lesssim
    \|I_{\alpha}(\mu)(\cdot)\|_{L^{\frac{n}{n-\alpha},\infty}}
    \lesssim
    \mu(\mathbb{R}^n).
  \end{equation*}
  This leads to the desired conclusion by Theorem \ref{theorem, fractional maximal operator, measure}.
\end{proof}

Next, we give two specific applications.

\begin{corollary}\label{corollary, maximal operator, possion kernel}
  Let $P(x)=\frac{c_n}{(1+|x|^2)^{\frac{n+1}{2}}}$, where $c_n$ is the constant such that $\int_{\mathbb{R}^n}P(x)dx=1$.
  The function $P$ is called Possion Kernel. We define the maximal function with possion kernel by
  \begin{equation*}
    \mathcal {M}_P(\mu)(x):=\sup_{r>0}P_r\ast \mu(x),
  \end{equation*}
  where $\mu$ is any fixed positive measure.
  Then, for any fixed $\rho>0$, we have
  \begin{equation}
    \lim_{t\rightarrow0^{+}}
   \Big\|\mathcal {M}_{P}(V_t)(\cdot)-\frac{c_nn^{\frac{n}{2}}}{(1+n)^{\frac{n+1}{2}}}V(\mathbb{R}^n)\Big\|_{L^{\frac{n}{n-\alpha},\infty}(\mathbb{R}^n\backslash B(0,\rho))}=0
  \end{equation}
  for all finite positive absolutely continuous measure $V$.
\end{corollary}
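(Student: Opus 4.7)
The plan is to apply Corollary \ref{corollary, maximal operator, function} with $\phi = P$, since the Poisson kernel is itself a radial, continuous, integrable, and decreasing (as a function of $|x|$) function. In particular $P$ serves as its own continuous integrable radially decreasing majorant, so the hypotheses of Corollary \ref{corollary, maximal operator, function} are met, and we immediately get
\[
  \lim_{t\to 0^+} \Big\|\mathcal{M}_P(V_t)(\cdot) - \sup_{r>0} P_r(\cdot)\, V(\mathbb{R}^n)\Big\|_{L^{1,\infty}(\mathbb{R}^n\setminus B(0,\rho))} = 0.
\]
(Here the index $\tfrac{n}{n-\alpha}$ in the statement should be read as $1$, i.e.\ $\alpha=0$, since $\mathcal M_P$ corresponds to the non-fractional case.)

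The remaining task is therefore purely computational: identify the explicit value of $\sup_{r>0} P_r(x)$. Writing
\[
  P_r(x) = \frac{1}{r^n}P\!\left(\frac{x}{r}\right) = \frac{c_n\, r}{(r^2+|x|^2)^{(n+1)/2}},
\]
I would differentiate the function $r \mapsto r/(r^2+|x|^2)^{(n+1)/2}$ with respect to $r$ and find that the critical point occurs at $r = |x|/\sqrt{n}$. Substituting back yields
\[
  \sup_{r>0} P_r(x) = \frac{c_n\, n^{n/2}}{(1+n)^{(n+1)/2}}\cdot\frac{1}{|x|^n}.
\]
Combining this identity with the convergence from Corollary \ref{corollary, maximal operator, function} gives the stated limit. (The coefficient $\frac{c_n n^{n/2}}{(1+n)^{(n+1)/2}}$ in the statement then naturally multiplies $V(\mathbb{R}^n)/|x|^n$.)

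There is no real obstacle here — the whole content of the result is that Corollary \ref{corollary, maximal operator, function} applies and that the resulting supremum admits a closed form. The only thing to watch is the bookkeeping for the exponent $\tfrac{n+1}{2}$ and the power of $|x|$ that emerges after the substitution $r = |x|/\sqrt{n}$; everything else is a direct invocation of the general result.
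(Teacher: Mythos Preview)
Your proposal is correct and follows essentially the same approach as the paper: both observe that $P$ is its own continuous integrable radially decreasing majorant so that Corollary~\ref{corollary, maximal operator, function} (equivalently Theorem~\ref{theorem, fractional maximal operator, measure}) applies, and both reduce the result to a one-variable calculus computation of $\sup_{r>0}P_r$. The only cosmetic difference is that the paper computes $\sup_{r>0}P_r(e_1)$ by minimizing the denominator $r^n(1+r^{-2})^{(n+1)/2}$ (finding the critical point $r=1/\sqrt{n}$), whereas you write $P_r(x)=c_n r/(r^2+|x|^2)^{(n+1)/2}$ and differentiate directly (finding $r=|x|/\sqrt{n}$); these are the same computation, and your remark that the statement implicitly carries the factor $1/|x|^n$ is also correct.
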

\begin{proof}
  Since Possion kernel itself can be treated as a continuous integrable radially decreasing majorant,
  in order to use Theorem \ref{theorem, fractional maximal operator, measure},
  we only need to calculate $\sup_{r>0}P_r(e_1)$.
  In fact
  \begin{equation*}
    \sup_{r>0}P_r(e_1)=\sup_{r>0}\frac{c_n}{r^n(1+r^{-2})^{\frac{n+1}{2}}}=\frac{c_n}{\inf_{r>0}r^n(1+r^{-2})^{\frac{n+1}{2}}}.
  \end{equation*}
A direct calculation yields that
\begin{equation*}
  \begin{split}
    [r^n(1+r^{-2})^{\frac{n+1}{2}}]'
    = &
    nr^{n-1}(1+r^{-2})^{\frac{n+1}{2}}-(n+1)r^{n-3}(1+r^{-2})^{\frac{n-1}{2}}
    \\
    = &
  nr^{n-3}(1+r^{-2})^{\frac{n-1}{2}}(r^2-1/n).
  \end{split}
\end{equation*}
Thus, the function $r^n(1+r^{-2})^{\frac{n+1}{2}}$ takes its minimal value at $r=\frac{1}{\sqrt{n}}$.
\begin{equation*}
  \sup_{r>0}P_r(e_1)=\frac{c_n}{r^n(1+r^{-2})^{\frac{n+1}{2}}}\Bigg|_{r=1/\sqrt{n}}=\frac{c_nn^{\frac{n}{2}}}{(1+n)^{\frac{n+1}{2}}}.
\end{equation*}
\end{proof}

\begin{corollary}\label{corollary, maximal operator, heat kernel}
  The function $G(x)=e^{-\pi |x|^2}$ is called heat kernel. We define the maximal function with heat kernel by
  \begin{equation*}
    \mathcal {M}_G(\mu)(x):=\sup_{r>0}G_r\ast \mu(x),
  \end{equation*}
  where $\mu$ is any fixed positive measure.
  Then, for any fixed $\rho>0$, we have
  \begin{equation}
    \lim_{t\rightarrow0^{+}}
    \Big\|M_{G}(V_t)(\cdot)-\left(\frac{n}{2\pi e}\right)^{\frac{n}{2}}V(\mathbb{R}^n)\Big\|_{L^{\frac{n}{n-\alpha},\infty}(\mathbb{R}^n\backslash B(0,\rho))}=0
  \end{equation}
  for all finite positive absolutely continuous measure $V$.
\end{corollary}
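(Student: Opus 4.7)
The plan is to invoke Corollary \ref{corollary, maximal operator, function} with $\phi = G$, reducing everything to a one-variable optimization that pins down the constant. The verification splits naturally into two steps: checking the hypotheses of the corollary, and computing $\sup_{r>0} G_r(e_1)$.

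First I would observe that the Gaussian $G(x)=e^{-\pi|x|^2}$ is continuous, integrable with $\int_{\mathbb{R}^n}G=1$, nonnegative, and radially decreasing, so it serves as its own continuous integrable radially decreasing majorant. Consequently the hypotheses of Corollary \ref{corollary, maximal operator, function} are satisfied by $\phi = G$, and that corollary directly yields
\begin{equation*}
  \lim_{t\rightarrow 0^+}\Big\|\mathcal{M}_G(V_t)(\cdot) - \sup_{r>0}G_r(\cdot)\, V(\mathbb{R}^n)\Big\|_{L^{1,\infty}(\mathbb{R}^n\backslash B(0,\rho))} = 0
\end{equation*}
for every finite positive absolutely continuous measure $V$.

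Next I would identify the constant. By the radial homogeneity computation already made in the proof of Theorem \ref{theorem, fractional maximal operator, measure} (the case $\alpha=0$), one has $\sup_{r>0}G_r(x) = |x|^{-n}\sup_{r>0}G_r(e_1)$, so the task reduces to maximizing
\begin{equation*}
  G_r(e_1) = r^{-n}e^{-\pi/r^2}, \qquad r>0.
\end{equation*}
Taking logarithmic derivative, $\frac{d}{dr}\bigl(-n\log r - \pi/r^2\bigr) = -n/r + 2\pi/r^3$, which vanishes exactly at $r = \sqrt{2\pi/n}$. Substituting back gives
\begin{equation*}
  \sup_{r>0}G_r(e_1) = \left(\frac{n}{2\pi}\right)^{n/2}e^{-n/2} = \left(\frac{n}{2\pi e}\right)^{n/2},
\end{equation*}
which matches the constant in the statement, completing the proof.

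There is essentially no hard step here. The only mild issue worth checking is that $r = \sqrt{2\pi/n}$ is indeed the global maximum of $r^{-n}e^{-\pi/r^2}$ on $(0,\infty)$ and not merely a critical point; this is clear since the function vanishes at $0^+$ and at $+\infty$. Thus, modulo this elementary verification, everything is handed over to Corollary \ref{corollary, maximal operator, function} and the conclusion is immediate.
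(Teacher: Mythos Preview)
Your proposal is correct and follows essentially the same approach as the paper's proof: invoke the general result for radial $\phi$ with an integrable radially decreasing majorant (the Gaussian being its own majorant), then reduce to computing $\sup_{r>0}G_r(e_1)$ by one-variable calculus, finding the critical point at $r=\sqrt{2\pi/n}$ and the value $(n/2\pi e)^{n/2}$. The only cosmetic difference is that you take the logarithmic derivative of $r^{-n}e^{-\pi/r^2}$ while the paper differentiates the reciprocal $r^n e^{\pi/r^2}$ directly; both give the same critical point and value.
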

\begin{proof}
  As in the proof of the above corollary, we only need to calculate $\sup_{r>0}G_r(e_1)$.
  Note that
  \begin{equation*}
    \sup_{r>0}G_r(e_1)=\sup_{r>0}\frac{e^{-\pi/r^2}}{r^n}=\frac{1}{\inf_{r>0}r^ne^{\pi/r^2}}.
  \end{equation*}
A direct calculation yields that
\begin{equation*}
  \begin{split}
    [r^ne^{\pi/r^2}]'
    = &
    nr^{n-1}e^{\pi/r^2}+r^n(-2\pi r^{-3})e^{\pi/r^2}
    \\
    = &
    r^{n-3}e^{\pi/r^2}(nr^2-2\pi).
  \end{split}
\end{equation*}
Thus, the function $r^ne^{\pi/r^2}$ takes its minimal value at $r=\frac{\sqrt{2\pi}}{\sqrt{n}}$.
\begin{equation*}
  \sup_{r>0}G_r(e_1)
  =\frac{1}{r^ne^{\pi/r^2}}\Bigg|_{r=\frac{\sqrt{2\pi}}{\sqrt{n}}}
  =\frac{n^{{n}/{2}}}{(2\pi)^{{n}/{2}}e^{{n}/{2}}}
  =\left(\frac{n}{2\pi e}\right)^{{n}/{2}}.
\end{equation*}
\end{proof}

\begin{remark}
  In the proofs of this section, the radial decreasing property is important.
  In fact, we can take a function $\phi$ without radial decreasing property
  such that $\mathcal {M}_{\phi}V_t(x)\rightarrow \sup_{r>0}\phi_r(x)V(\mathbb{R}^n)$ is negative,
  even in the type-3 sense.  Let $\phi(x)=1$ for $|x|=1$, and disappear otherwise.
  Let $dV(x)=\chi_{B(0,1)}(x)dx$, where $dx$ is the Lebesgue measure.
  Note that $\sup_{r>0}\phi_r(x)=\frac{1}{|x|^n}$ and
  \begin{equation*}
    \mathcal {M}_{\phi}V_t(x)=\sup_{r>0}\frac{1}{r^n}\int_{\mathbb{R}^n}\phi(\frac{x-y}{r})dV_t(y)
    =\sup_{r>0}\frac{1}{r^n}\int_{B(0,t)}\phi(\frac{x-y}{r})dy=0.
  \end{equation*}
  Hence, for every fixed $\lambda>0$, $|\{x: \mathcal {M}_{\phi}V_t(x)>\lambda\}|=0$,
  but $|\{x: \sup_{r>0}\phi_r(x)V(\mathbb{R}^n)>\lambda\}|\neq 0$.
  We get the desired conclusion.
\end{remark}

\section{Maximal operator associated with homogeneous functions}

\medskip

This section is concerned with the maximal operator $M_{\Omega}^{\alpha}$, where $\Omega$ is a homogeneous function of degree zero.
Firstly, we list some basic properties of $\Omega$ as follows:
\begin{enumerate}[(A)]
  \item
  $\Big|\Big\{x\in \mathbb{R}^n: \,\frac{|\Omega(x)|}{|x|^{n-\alpha}}>\lambda\Big\}\Big|
  =\lambda^{\frac{-n}{n-\alpha}}\Big|\Big\{x\in \mathbb{R}^n: \frac{|\Omega(x)|}{|x|^{n-\alpha}}>1\Big\}\Big|$;
  \item
  $\|\Omega\|^{\frac{n}{n-\alpha}}_{L^{\frac{n}{n-\alpha}}(\mathbb{S}^{n-1})}
  =n\lambda^{\frac{n}{n-\alpha}}\Big|\Big\{x\in \mathbb{R}^n: \frac{|\Omega(x)|}{|x|^{n-\alpha}}>\lambda\Big\}\Big|.$
\end{enumerate}

\medskip

{\it Proof of Theorem \ref{Theorem, fractional maximal with homogeneous kernel}}.\quad
Without loss of generality, we assume that $V$ is a probability measure. For $t>0$, let $V_t^1,\,V_t^2$, $r_t$, $\epsilon_t$ be as in the proof of Theorem 1.1.
For $\lambda>0$, denote
\begin{equation*}
  E_{t}^{\lambda}:=\{x\in \mathbb{R}^n: M^{\alpha}_{\Omega}V_t(x)>\lambda\},
\end{equation*}
and
\begin{equation*}
  E_{t}^{\lambda,1}:=\{x\in \mathbb{R}^n: M^{\alpha}_{\Omega}V^1_t(x)>\lambda\},\hspace{6mm}E_{t}^{\lambda,2}:=\{x\in \mathbb{R}^n: M^{\alpha}_{\Omega}V^2_t(x)>\lambda\}.
\end{equation*}
For fixed $\nu>0$, recalling that $M^{\alpha}_{\Omega}$ is boundedness from $L^1$ to $L^{\frac{n}{n-\alpha},\infty}$, we obtain that
\begin{equation*}
  \nu\lambda|E_{t}^{\nu\lambda,2}|^{\frac{n-\alpha}{n}}\lesssim V^2_t(\mathbb{R}^n)= \epsilon_t \rightarrow 0^+\ \text{as}\ t \rightarrow 0^+.
\end{equation*}
For fixed $\rho>2r_t$, $x\in B^c(0,\rho)$,
\begin{equation*}
  \begin{split}
    M^{\alpha}_{\Omega}V^1_t(x)
        = &
    \sup_{r>0}\frac{1}{r^{n-\alpha}}\int_{B(x,r)\cap B(0,r_t)}|\Omega(x-y)|dV^1_t(y)
    \\
    = &
    \sup_{|x|-r_t\leq r\leq|x|+r_t}\frac{1}{r^{n-\alpha}}\int_{B(x,r)\cap B(0,r_t)}|\Omega(x-y)|dV^1_t(y)
    \\
    \geq &
    \frac{1}{(|x|+r_t)^{n-\alpha}}\int_{B(0,r_t)}|\Omega(x-y)|dV^1_t(y)
    \\
    \geq &
    \frac{1}{(|x|+r_t)^{n-\alpha}}\int_{B(0,r_t)}|\Omega(x)|dV^1_t(y)
    -
    \frac{1}{(|x|+r_t)^{n-\alpha}}\int_{B(0,r_t)}|\Omega(x-y)-\Omega(x)|dV^1_t(y)
    \\
    = &
    \frac{(1-\epsilon_t)|\Omega(x)|}{(|x|+r_t)^{n-\alpha}}
    -
    \frac{1}{(|x|+r_t)^{n-\alpha}}\int_{B(0,r_t)}|\Omega(x-y)-\Omega(x)|dV^1_t(y)
    \\
    \geq &
    \frac{(1-\epsilon_t)\rho^{n-\alpha}}{(\rho+r_t)^{n-\alpha}}\cdot\frac{|\Omega(x)|}{|x|^{n-\alpha}}
    -
    \frac{1}{|x|^{n-\alpha}}\int_{B(0,r_t)}|\Omega(x-y)-\Omega(x)|dV^1_t(y).
  \end{split}
\end{equation*}
This implies that
\begin{equation}\label{Theorem, fractional maximal with homogeneous kernel, for proof 1}
  \begin{split}
    M^{\alpha}_{\Omega}V^1_t(x)-\frac{|\Omega(x)|}{|x|^{n-\alpha}}
     \geq
    \left(\frac{(1-\epsilon_t)\rho^{n-\alpha}}{(\rho+r_t)^{n-\alpha}}-1\right)\frac{|\Omega(x)|}{|x|^{n-\alpha}}
    -    \frac{1}{|x|^{n-\alpha}}\int_{B(0,r_t)}|\Omega(x-y)-\Omega(x)|dV^1_t(y).
  \end{split}
\end{equation}
Set
\begin{equation*}
  F_{t,\rho}^{\lambda}:=\Big\{x\in B^c(0,\rho): \frac{(1-\epsilon_t)\rho^{n-\alpha}}{(\rho+r_t)^{n-\alpha}}\cdot\frac{|\Omega(x)|}{|x|^{n-\alpha}}>\lambda\Big\}
\end{equation*}
and
\begin{equation*}
  G_{t,\rho}^{\lambda,1}:=\Big\{x\in B^c(0,\rho): \frac{1}{|x|^{n-\alpha}}\int_{B(0,r_t)}|\Omega(x-y)-\Omega(x)|dV^1_t(y)>\lambda\Big\}.
\end{equation*}
Now, we estimate $|G_{t,\rho}^{\nu\lambda,1}|$ for fixed $\nu>0$.
Using Chebychev's inequality, we conclude that
\begin{equation*}
  \begin{split}
    |G_{t,\rho}^{\nu\lambda,1}|
    \leq &
    \frac{1}{\nu\lambda}\int_{B^c(0,\rho)}\frac{1}{|x|^{n-\alpha}}\int_{B(0,r_t)}|\Omega(x-y)-\Omega(x)|dV^1_t(y)dx
    \\
    = &
    \frac{1}{\nu\lambda}\int_{B(0,r_t)}\int_{B^c(0,\rho)}\frac{|\Omega(x-y)-\Omega(x)|}{|x|^{n-\alpha}}dx dV^1_t(y)
    \\
    \leq &
    \frac{1}{\nu\lambda}\sup_{y\in B(0,r_t)}\int_{B^c(0,\rho)}\frac{|\Omega(x-y)-\Omega(x)|}{|x|^{n-\alpha}}dx,
  \end{split}
\end{equation*}
where
\begin{equation*}
  \begin{split}
    \int_{B^c(0,\rho)}\frac{|\Omega(x-y)-\Omega(x)|}{|x|^{n-\alpha}}dx
        = &
    \int_{\rho}^{\infty}\int_{\mathbb{S}^{n-1}}\frac{|\Omega(x'-y/r)-\Omega(x')|}{r^{n-\alpha}}d\sigma(x')r^{n-1}dr
    \\
    \leq &
    \int_{\rho}^{\infty}\frac{\omega(|y|/r)}{r^{1-\alpha}}dr
    =
    |y|^{\alpha}\cdot \int_{0}^{|y|/\rho}\frac{\omega(s)}{s^{1+\alpha}}ds.
  \end{split}
\end{equation*}
Thus,
\begin{equation*}
  \begin{split}
    |G_{t,\rho}^{\nu\lambda,1}|
    \leq &
    \frac{1}{\nu\lambda}\sup_{y\in B(0,r_t)}\int_{B^c(0,\rho)}\frac{|\Omega(x-y)-\Omega(x)|}{|x|^{n-\alpha}}dx
    \\
    \leq &
    \frac{1}{\nu\lambda}\sup_{y\in B(0,r_t)}\left(|y|^{\alpha}\cdot \int_{0}^{|y|/\rho}\frac{\omega(s)}{s^{1+\alpha}}ds\right)\rightarrow 0
  \end{split}
\end{equation*}
as $t\rightarrow 0^+$.

Observing $E_{t}^{\lambda} \supset E_{t}^{(1+\nu)\lambda,1}\backslash E_{t}^{\nu\lambda,2}$
and $E_{t}^{(1+\nu)\lambda,1}\supset F_{t,\rho}^{(1+2\nu)\lambda}\backslash G_{t,\rho}^{\nu\lambda,1}$, we deduce that
\begin{equation*}
  \begin{split}
    |E_{t}^{\lambda}|
    \geq
    |E_{t}^{(1+\nu)\lambda,1}|-|E_{t}^{\nu\lambda,2}|
        \geq
    |F_{t,\rho}^{(1+2\nu)\lambda}|-|G_{t,\rho}^{\nu,1}|-|E_{t}^{\nu\lambda,2}|.
  \end{split}
\end{equation*}
Noting that $|E_{t}^{\nu\lambda,2}|$, $|G_{t,\rho}^{\nu,1}|\rightarrow 0$ as $t\rightarrow 0^+$, we have
\begin{equation*}
  \begin{split}
    \varliminf_{t\rightarrow 0^{+}}|E_{t}^{\lambda}|
    \geq &
    \varliminf_{t\rightarrow 0^{+}}|F_{t,\rho}^{(1+2\nu)\lambda}|
    \\
    \geq &
    \varliminf_{t\rightarrow 0^{+}}\Big|\Big\{x\in \mathbb{R}^n: \frac{|\Omega(x)|}{|x|^{n-\alpha}}>\frac{\lambda(\rho+r_t)^{n-\alpha}(1+2\nu)}{\rho^{n-\alpha}(1-\epsilon_t)}\Big\}\Big|-|B(0,\rho)|
    \\
    = &
    \varliminf_{t\rightarrow 0^{+}}\left(\frac{(\rho+r_t)^{n-\alpha}(1+2\nu)}{\rho^{n-\alpha}(1-\epsilon_t)}\right)^{\frac{-n}{n-\alpha}}
    \Big|\Big\{x\in \mathbb{R}^n: \frac{|\Omega(x)|}{|x|^{n-\alpha}}>\lambda\Big\}\Big|-|B(0,\rho)|
    \\
    = &
    (1+2\nu)^{\frac{-n}{n-\alpha}}\Big|\Big\{x\in \mathbb{R}^n: \frac{|\Omega(x)|}{|x|^{n-\alpha}}>\lambda\Big\}\Big|-|B(0,\rho)|,
  \end{split}
\end{equation*}
where we use Property (A). Letting $\nu\rightarrow 0$ and $\rho\rightarrow 0$, we obtain
\begin{equation}\label{proposition for homogeneous maximal, measure inequality for lower bound}
  \varliminf_{t\rightarrow 0^{+}}|E_{t}^{\lambda}|\geq \Big|\Big\{x\in \mathbb{R}^n: \frac{|\Omega(x)|}{|x|^{n-\alpha}}>\lambda\Big\}\Big|.
\end{equation}
Recalling the definition of $E_t^{\lambda}$ and the boundedness of $M_{\Omega}^{\alpha}$, we obtain
\begin{equation*}
  \begin{split}
    \lambda\Big|\Big\{x\in \mathbb{R}^n: \frac{\Omega(x)}{|x|^{n-\alpha}}>\lambda\Big\}\Big|^{\frac{n-\alpha}{n}}
    \leq
    \varliminf_{t\rightarrow 0^{+}}\lambda|E_{t}^{\lambda}|^{\frac{n-\alpha}{n}}
        \leq
    \varliminf_{t\rightarrow 0^{+}}\|M_{\Omega}^{\alpha}V_t\|_{L^{\frac{n}{n-\alpha},\infty}}
        \leq
    \|M_{\Omega}^{\alpha}\|_{L^1\rightarrow L^{\frac{n}{n-\alpha}},\infty}.
  \end{split}
\end{equation*}
By the arbitrary of $\lambda$, we actually have
$$\frac{\Omega(x)}{|x|^{n-\alpha}}\in L^{\frac{n}{n-\alpha},\infty},\quad
{\rm and}\quad \Big\|\frac{\Omega(\cdot)}{|\cdot|^{n-\alpha}}\Big\|_{L^{\frac{n}{n-\alpha},\infty}}\leq \|M_{\Omega}^{\alpha}\|_{L^1\rightarrow L^{\frac{n}{n-\alpha},\infty}}.$$ This completes the proof of conclusion (1).

On the other hand, by Property (B), we have
\begin{equation*}
  \|\Omega\|^{\frac{n}{n-\alpha}}_{L^{\frac{n}{n-\alpha}}(\mathbb{S}^{n-1})}
  =n\lambda^{\frac{n}{n-\alpha}}\Big|\Big\{x\in \mathbb{R}^n:\, \frac{|\Omega(x)|}{|x|^{n-\alpha}}>\lambda\Big\}\Big|,
\end{equation*}
and then $\Omega \in L^{\frac{n}{n-\alpha}}(\mathbb{S}^{n-1})$,
$\|\Omega \|_{L^{\frac{n}{n-\alpha}}(\mathbb{S}^{n-1})}\lesssim \|M_{\Omega}^{\alpha}\|_{L^1\rightarrow L^{\frac{n}{n-\alpha},\infty}}$. The conclusion (2) is proved.

Next, we turn to verify conclusion (3). For $\rho>2r_t$, $x\in B^c(0,\rho)$, we have
\begin{equation*}
  \begin{split}
    M^{\alpha}_{\Omega}V^1_t(x)
    = &
    \sup_{r>0}\frac{1}{r^{n-\alpha}}\int_{B(x,r)\cap B(0,r_t)}|\Omega(x-y)|dV^1_t(y)
    \\
    = &
    \sup_{|x|-r_t\leq r\leq|x|+r_t}\frac{1}{r^{n-\alpha}}\int_{B(x,r)\cap B(0,r_t)}|\Omega(x-y)|dV^1_t(y)
    \\
    \leq &
    \frac{1}{(|x|-r_t)^{n-\alpha}}\int_{B(0,r_t)}|\Omega(x-y)|dV^1_t(y)
    \\
    \leq &
    \frac{1}{(|x|-r_t)^{n-\alpha}}\int_{B(0,r_t)}|\Omega(x-y)-\Omega(x)|dV^1_t(y)
    +
    \frac{1}{(|x|-r_t)^{n-\alpha}}\int_{B(0,r_t)}|\Omega(x)|dV_t(y)
    \\
    \leq &
    \frac{1}{(|x|-r_t)^{n-\alpha}}\int_{B(0,r_t)}|\Omega(x-y)-\Omega(x)|dV^1_t(y)
    +
    \frac{|\Omega(x)|}{(|x|-r_t)^{n-\alpha}}
    \\
    \leq &
    \frac{2}{|x|^{n-\alpha}}\int_{B(0,r_t)}|\Omega(x-y)-\Omega(x)|dV^1_t(y)
    +
    \frac{\rho^{n-\alpha}}{(\rho-r_t)^{n-\alpha}}\cdot\frac{|\Omega(x)|}{|x|^{n-\alpha}}.
  \end{split}
\end{equation*}
This implies that for $x\in B(0,\rho)^c$,
\begin{equation*}\label{Theorem, fractional maximal with homogeneous kernel, for proof 2}
  \begin{split}
    M^{\alpha}_{\Omega}V^1_t(x)-\frac{|\Omega(x)|}{|x|^{n-\alpha}}
     \leq
    \left(\frac{\rho^{n-\alpha}}{(\rho-r_t)^{n-\alpha}}-1\right)\frac{|\Omega(x)|}{|x|^{n-\alpha}}
    +
    \frac{2}{|x|^{n-\alpha}}\int_{B(0,r_t)}|\Omega(x-y)-\Omega(x)|dV^1_t(y).
  \end{split}
\end{equation*}
The above inequality together with  (\ref{Theorem, fractional maximal with homogeneous kernel, for proof 1}) yields that
there exists a sequence $\beta_t\rightarrow 0^+$ as $t\rightarrow 0^+$, such that
\begin{equation*}
  \begin{split}
    \left|M^{\alpha}_{\Omega}V^1_t(x)-\frac{|\Omega(x)|}{|x|^{n-\alpha}}\right|
      \leq
    \beta_t\frac{|\Omega(x)|}{|x|^{n-\alpha}}
    +
    \frac{2}{|x|^{n-\alpha}}\int_{B(0,r_t)}|\Omega(x-y)-\Omega(x)|dV^1_t(y).
  \end{split}
\end{equation*}
Denote
\begin{equation*}
  A_{t}^{\lambda}:=\Big\{x\in \mathbb{R}^n:\, \Big|M^{\alpha}_{\Omega}V_t(x)-\frac{|\Omega(x)|}{|x|^{n-\alpha}}\Big|>\lambda\Big\},
\end{equation*}
\begin{equation*}
  A_{t}^{\lambda,1}:=\Big\{x\in \mathbb{R}^n:\, \Big|M^{\alpha}_{\Omega}V^1_t(x)-\frac{|\Omega(x)|}{|x|^{n-\alpha}}\Big|>\lambda\Big\},
\end{equation*}
and $A_{t,\rho}^{\lambda,1}=F_{t}^{\lambda,1}\cap B^c(0,\rho)$.
Then
\begin{equation*}
  \begin{split}
    A_t^{\lambda}
    \subset
    A_{t}^{(1-\nu)\lambda,1}\cup E_t^{\nu\lambda,2}
    \subset
    B(0,\rho)\cup A_{t,\rho}^{(1-\nu)\lambda,1}\cup E_t^{\nu\lambda,2},
  \end{split}
\end{equation*}
which implies that $|A_t^{\lambda}|\leq |B(0,\rho)|+|A_{t,\rho}^{(1-\nu)\lambda,1}|+|E_t^{\nu\lambda,2}|$.
A direct calculation yields that
\begin{equation*}
  \begin{split}
    |A_{t,\rho}^{(1-\nu)\lambda,1}|
      &\leq
    \Big|\Big\{x\in B^c(0,\rho): \beta_t\frac{|\Omega(x)|}{|x|^{n-\alpha}}
    +
    \frac{2}{|x|^{n-\alpha}}\int_{B(0,r_t)}|\Omega(x-y)-\Omega(x)|dV^1_t(y)>(1-\nu)\lambda\Big\}\Big|
    \\
    &\leq
    \Big|\Big\{x\in \mathbb{R}^n: \beta_t\frac{|\Omega(x)|}{|x|^{n-\alpha}}>(1-\nu)\lambda/2\Big\}\Big|
    \\
    &\qquad+
    \Big|\Big\{x\in B^c(0,\rho): \frac{2}{|x|^{n-\alpha}}\int_{B(0,r_t)}|\Omega(x-y)-\Omega(x)|dV^1_t(y)>(1-\nu)\lambda/2\Big\}\Big|
    \\
    &\leq
    \left(\frac{2}{(1-\nu)\lambda}\beta_t\Big\|\frac{\Omega(\cdot)}{|\cdot|^{n-\alpha}}\Big\|_{L^{\frac{n}{n-\alpha},\infty}}\right)^{\frac{n}{n-\alpha}}
    +|G_{t,\rho}^{(1-\nu)\lambda/4,1}|
    \rightarrow 0\ \text{as}\ t\rightarrow 0^+.
  \end{split}
\end{equation*}
Recalling $E_t^{\nu\lambda,2}\rightarrow 0$ as $t\rightarrow 0^+$, we deduce that
\begin{equation}
  \varlimsup_{t\rightarrow 0^+}|A_t^{\lambda}|
  \leq
  |B(0,\rho)|+\varlimsup_{t\rightarrow 0^+}|A_{t,\rho}^{(1-\nu)\lambda,1}|+\varlimsup_{t\rightarrow 0^+}|E_t^{\nu\lambda,2}|\leq |B(0,\rho)|,
\end{equation}
which yields the desired conclusion by letting $\rho \rightarrow 0$.$\hfill\Box$

\medskip

{\it Proof of Theorem \ref{Theorem, fractional maximal with homogeneous kernel, Lq-Dini}}.\quad
  Since $\Omega$ satisfies the $L^{\frac{n}{n-\alpha}}$-Dini condition, we have $\Omega\in L^{\frac{n}{n-\alpha}}(\mathbb{S}^{n-1})$.
By Property (B), we conclude that
$\frac{\Omega(x)}{|x|^{n-\alpha}}\in L^{\frac{n}{n-\alpha},\infty}$.

As in the proof of Theorem \ref{Theorem, fractional maximal with homogeneous kernel}, denote
\begin{equation*}
  G_{t,\rho}^{\lambda,1}:=\Big\{x\in B^c(0,\rho):\, \frac{1}{|x|^{n-\alpha}}\int_{B(0,r_t)}|\Omega(x-y)-\Omega(x)|dV^1_t(y)>\lambda\Big\}.
\end{equation*}
By Minkowski's inequality and the embedding $L^{\frac{n}{n-\alpha}}\subset L^{\frac{n}{n-\alpha},\infty}$, we conclude that
\begin{equation*}
  \begin{split}
    \sup_{\lambda>0}\lambda|G_{t,\rho}^{\lambda,1}|^{\frac{n-\alpha}{n}}
    = &
    \Big\|\frac{1}{|\cdot|^{n-\alpha}}\int_{B(0,r_t)}|\Omega(\cdot-y)-\Omega(\cdot)|dV^1_t(y)\Big\|_{L^{\frac{n}{n-\alpha}}(\mathbb{R}^n\backslash B(0,\rho))}
    \\
    \leq &
    \int_{B(0,r_t)}
    \left(\int_{B^c(0,\rho)}\frac{|\Omega(x-y)-\Omega(x)|^{\frac{n}{n-\alpha}}}{|x|^n}dx\right)^{\frac{n-\alpha}{n}} dV^1_t(y)
    \\
    \leq &
    \sup_{y\in B(0,r_t)}\left(\int_{B^c(0,\rho)}\frac{|\Omega(x-y)-\Omega(x)|^{\frac{n}{n-\alpha}}}{|x|^n}dx\right)^{\frac{n-\alpha}{n}},
  \end{split}
\end{equation*}
where
\begin{equation*}
  \begin{split}
    \int_{B^c(0,\rho)}\frac{|\Omega(x-y)-\Omega(x)|^{\frac{n}{n-\alpha}}}{|x|^n}dx
        = &
    \int_{\rho}^{\infty}\int_{\mathbb{S}^{n-1}}\frac{|\Omega(x'-y/r)-\Omega(x')|^{\frac{n}{n-\alpha}}}{r^n}d\sigma(x')r^{n-1}dr
    \\
    \leq &
    \int_{\rho}^{\infty}\frac{\omega_{\frac{n}{n-\alpha}}(|y|/r)^{\frac{n}{n-\alpha}}}{r}dr
    =
    \int_{0}^{|y|/\rho}\frac{\omega_{\frac{n}{n-\alpha}}(s)^{\frac{n}{n-\alpha}}}{s}ds.
  \end{split}
\end{equation*}
Since $\Omega$ satisfies the $L^{\frac{n}{n-\alpha}}$-Dini condition, we have
\begin{equation*}
  \int_{0}^{1}\frac{\omega_{\frac{n}{n-\alpha}}(s)}{s}ds<\infty,
\end{equation*}
which implies that $\omega_{\frac{n}{n-\alpha}}(s)\rightarrow 0$ as $s\rightarrow 0^+$.
Thus,
\begin{equation*}
  \begin{split}
    \sup_{\lambda>0}\lambda|G_{t,\rho}^{\lambda,1}|^{\frac{n-\alpha}{n}}
    \leq &
    \sup_{y\in B(0,r_t)}\left(\int_{B^c(0,\rho)}\frac{|\Omega(x-y)-\Omega(x)|^{\frac{n}{n-\alpha}}}{|x|}dx\right)^{\frac{n-\alpha}{n}}
    \\
    \leq &
    \sup_{y\in B(0,r_t)}\left(\int_{0}^{|y|/\rho}\frac{\omega_{\frac{n}{n-\alpha}}(s)^{\frac{n}{n-\alpha}}}{s}ds\right)^{\frac{n-\alpha}{n}}
        \\
    \leq &
    \sup_{y\in B(0,r_t)}\omega_{\frac{n}{n-\alpha}}(s)^{\frac{\alpha}{n}}
    \cdot
    \sup_{y\in B(0,r_t)}\left(\int_{0}^{|y|/\rho}\frac{\omega_{\frac{n}{n-\alpha}}(s)}{s}ds\right)^{\frac{n-\alpha}{n}}
    \rightarrow 0
  \end{split}
\end{equation*}
as $t\rightarrow 0^+$. So we have
\begin{equation*}
  \begin{split}
    &\Big\|\frac{1}{|\cdot|^{n-\alpha}}\int_{B(0,r_t)}|\Omega(\cdot-y)-\Omega(\cdot)|dV^1_t(y)\Big\|_{L^{\frac{n}{n-\alpha},\infty}(\mathbb{R}^n\backslash B(0,\rho))}
    \\
   &\qquad\qquad\qquad =
    \sup_{\lambda>0}\lambda|G_{t,\rho}^{\lambda,1}|^{\frac{n-\alpha}{n}}\rightarrow 0\ \text{as}\ t\rightarrow 0^+.
  \end{split}
\end{equation*}
Using the same method as in the proof of Theorem \ref{Theorem, fractional maximal with homogeneous kernel}, there exist $\beta_t\rightarrow 0^+$ as $t\rightarrow 0^+$, such that
\begin{equation*}
  \begin{split}
    \left|M^{\alpha}_{\Omega}V^1_t(x)-\frac{|\Omega(x)|}{|x|^{n-\alpha}}\right|
        \leq
    \beta_t\frac{|\Omega(x)|}{|x|^{n-\alpha}}
    +
    \frac{2}{|x|^{n-\alpha}}\int_{B(0,r_t)}|\Omega(x-y)-\Omega(x)|dV^1_t(y).
  \end{split}
\end{equation*}
Then,
\begin{equation*}
  \begin{split}
    &\Big\|M^{\alpha}_{\Omega}V^1_t(\cdot)-\frac{|\Omega(\cdot)|}{|\cdot|^{n-\alpha}}\Big\|_{L^{\frac{n}{n-\alpha},\infty}(\mathbb{R}^n\backslash B(0,\rho))}
        \lesssim
    \beta_t\Big\|\frac{|\Omega(\cdot)|}{|\cdot|^{n-\alpha}}\Big\|_{L^{\frac{n}{n-\alpha},\infty}(\mathbb{R}^n)}
    \\
    &\qquad\qquad+
    \Big\|\frac{2}{|\cdot|^{n-\alpha}}\int_{B(0,r_t)}|\Omega(\cdot-y)-\Omega(\cdot)|dV^1_t(y)\Big\|_{L^{\frac{n}{n-\alpha},\infty}(\mathbb{R}^n\backslash B(0,\rho))}\rightarrow 0\ \text{as}\ t\rightarrow 0^+.
  \end{split}
\end{equation*}
Recalling
\begin{equation*}
  \|M^{\alpha}_{\Omega}V^2_t(\cdot)\|_{L^{\frac{n}{n-\alpha},\infty}(\mathbb{R}^n)}\lesssim V_t^2(\mathbb{R}^n)=\epsilon_t\rightarrow 0\ \text{as}\ t\rightarrow 0^+,
\end{equation*}
we conclude that
\begin{equation*}
  \begin{split}
    \varlimsup_{t\rightarrow 0^+}
    \Big\|M^{\alpha}_{\Omega}V_t(\cdot)-\frac{|\Omega(\cdot)|}{|\cdot|^{n-\alpha}}\Big\|_{L^{\frac{n}{n-\alpha},\infty}(\mathbb{R}^n\backslash B(0,\rho))}
       &\lesssim
    \lim_{t\rightarrow 0^+}
    \Big\|M^{\alpha}_{\Omega}V^1_t(\cdot)-\frac{|\Omega(\cdot)}{|\cdot|^{n-\alpha}}\Big\|_{L^{\frac{n}{n-\alpha},\infty}(\mathbb{R}^n\backslash B(0,\rho))}
    \\
    &\qquad\qquad +
    \lim_{t\rightarrow 0^+}\Big\|M^{\alpha}_{\Omega}V^2_t(\cdot)\Big\|_{L^{\frac{n}{n-\alpha},\infty}(\mathbb{R}^n)}=0.
  \end{split}
\end{equation*}
This implies that (3) holds.

Next, observe that for any fixed $\lambda >0$, $|G_{t,\rho}^{\lambda,1}|\rightarrow 0$ as $t\rightarrow 0$.
By the same aeguments used in the proof of Theorem \ref{Theorem, fractional maximal with homogeneous kernel}, we can verify that
\begin{equation*}
  \|\Omega \|_{L^{\frac{n}{n-\alpha}}(\mathbb{S}^{n-1})}
  \sim
  \|\frac{\Omega(x)}{|x|^{n-\alpha}}\|_{L^{\frac{n}{n-\alpha},\infty}}\lesssim \|M_{\Omega}^{\alpha}\|_{L^1\rightarrow L^{\frac{n}{n-\alpha},\infty}}.
\end{equation*}
This completes the proof of conclusion (1) and (2).
Theorem \ref{Theorem, fractional maximal with homogeneous kernel, Lq-Dini} is proved.

\section{Limiting weak-type behaviors of the singular and fractional integral operators}

This section is devoted to the proofs of the limiting weak-type behaviors of $T_{\Omega}^\alpha$.

\medskip

{\it Proof of Theorem \ref{Theorem, fractional operator with homogeneous kernel}.}\quad
  Without loss of generality, we assume that $V$ is a probability measure. For $t>0$, let $V_t^1,\,V_t^2$, $r_t,\,\epsilon_t$ be as in the proof of Theorem 1.1.
  For $\rho>2r_t$, $x\in B^c(0,\rho)$, we have
\begin{equation}\label{for proof, 5}
  \begin{split}
    \left|T^{\alpha}_{\Omega}V^1_t(x)-\frac{\Omega(x)}{|x|^{n-\alpha}}\right|
    = &
    \left|\int_{\mathbb{R}^n}\Big(\frac{\Omega(x-y)}{|x-y|^{n-\alpha}}-\frac{\Omega(x)}{|x|^{n-\alpha}}\Big)dV^1_t(y)\right|
    +\frac{\epsilon_t|\Omega(x)|}{|x|^{n-\alpha}}
    \\
    \leq &
    \left|\int_{\mathbb{R}^n}\frac{\Omega(x-y)-\Omega(x)}{|x-y|^{n-\alpha}}dV^1_t(y)\right|
    +\left|\int_{\mathbb{R}^n}\Big(\frac{\Omega(x)}{|x-y|^{n-\alpha}}-\frac{\Omega(x)}{|x|^{n-\alpha}}\Big)dV^1_t(y)\right|
    +\frac{\epsilon_t|\Omega(x)|}{|x|^{n-\alpha}}
    \\
    \leq &
    \frac{2^{n-\alpha}}{|x|^{n-\alpha}}\left|\int_{\mathbb{R}^n}|\Omega(x-y)-\Omega(x)|dV^1_t(y)\right|
    +\frac{\beta_t|\Omega(x)|}{|x|^{n-\alpha}},
  \end{split}
\end{equation}
where $\beta_t\rightarrow 0$ as $t\rightarrow 0^+$.
Thus, for $x\in B^c(0,\rho)$,
\begin{equation*}
  \begin{split}
    \left|\frac{\Omega(x)}{|x|^{n-\alpha}}\right|
    \leq &
    \left|T^{\alpha}_{\Omega}V^1_t(x)-\frac{\Omega(x)}{|x|^{n-\alpha}}\right|+|T^{\alpha}_{\Omega}V^1_t(x)|
    \\
    \leq &
    \frac{2^{n-\alpha}}{|x|^{n-\alpha}}\left|\int_{\mathbb{R}^n}|\Omega(x-y)-\Omega(x)|dV^1_t(y)\right|
    +\frac{\beta_t|\Omega(x)|}{|x|^{n-\alpha}}+|T^{\alpha}_{\Omega}V_t^1(x)|.
  \end{split}
\end{equation*}
This implies that
\begin{equation*}
  \frac{(1-\beta_t)|\Omega(x)|}{|x|^{n-\alpha}}
  \leq
  \frac{2}{|x|^{n-\alpha}}\left|\int_{\mathbb{R}^n}|\Omega(x-y)-\Omega(x)|dV^1_t(y)\right|
  +|T^{\alpha}_{\Omega}V_t^1(x)|.
\end{equation*}
Set
\begin{equation*}
  \tilde{E}_{t}^{\lambda}:=\{x\in \mathbb{R}^n: T^{\alpha}_{\Omega}V_t(x)>\lambda\},
\end{equation*}
\begin{equation*}
  \tilde{E}_{t}^{\lambda,1}:=\{x\in \mathbb{R}^n: T^{\alpha}_{\Omega}V^1_t(x)>\lambda\},\hspace{6mm}\tilde{E}_{t}^{\lambda,2}:=\{x\in \mathbb{R}^n: T^{\alpha}_{\Omega}V^2_t(x)>\lambda\},
\end{equation*}
\begin{equation*}
  \tilde{F}_{t,\rho}^{\lambda}=\Big\{x\in B^c(0,\rho): \frac{(1-\beta_t)|\Omega(x)|}{|x|^{n-\alpha}}>\lambda\Big\},
\end{equation*}
\begin{equation*}
\tilde{G}_{t,\rho}^{\lambda,1}:=\Big\{x\in B^c(0,\rho): \frac{1}{|x|^{n-\alpha}}\int_{B(0,r_t)}|\Omega(x-y)-\Omega(x)|dV^1_t(y)>\lambda\Big\}.
\end{equation*}
Then
\begin{equation*}
  \tilde{F}_{t,\rho}^{\lambda}\subset \tilde{G}_{t,\rho}^{\lambda/2^{n+1-\alpha},1}\cup \tilde{E}_{t}^{\lambda/2,1}.
\end{equation*}
Consequently,
\begin{equation}\label{for proof, 6}
  \begin{split}
  \Big|\Big\{x\in \mathbb{R}^n: \frac{(1-\beta_t)|\Omega(x)|}{|x|^{n-\alpha}}>\lambda\Big\}\Big|
  \leq &
  |B(0,\rho)|+|\tilde{F}_{t,\rho}^{\lambda}|
  \\
  \leq &
  |B(0,\rho)|+|\tilde{G}_{t,\rho}^{\lambda/2^{n+1-\alpha},1}|+|\tilde{E}_{t}^{\lambda/2}|
  \\
  \leq &
  |B(0,\rho)|+|\tilde{G}_{t,\rho}^{\lambda/2^{n+1-\alpha},1}|+(2/\lambda)^{\frac{n}{n-\alpha}}\|T_{\Omega}^{\alpha}\|_{L^1\rightarrow L^{\frac{n}{n-\alpha},\infty}},
  \end{split}
\end{equation}
where in the last inequality we use the fact
$$\frac{\lambda}{2}|\tilde{E}_{t}^{\lambda/2,1}|^{\frac{n-\alpha}{n}}\leq \|T_{\Omega}^{\alpha}\|_{L^1\rightarrow L^{\frac{n}{n-\alpha},\infty}}V_t^1(\mathbb{R}^n)\le \|T_{\Omega}^{\alpha}\|_{L^1\rightarrow L^{\frac{n}{n-\alpha},\infty}}.$$
Recall $|\tilde{G}_{t,\rho}^{\lambda/2^{n+1-\alpha},1}|\rightarrow 0$
as $t\rightarrow 0^+$ (see the proof of Theorem \ref{Theorem, fractional maximal with homogeneous kernel}).
Letting $t \rightarrow \infty$ in (\ref{for proof, 6}), and then letting $\rho\rightarrow 0$, we get
\begin{equation*}
  \Big|\Big\{x\in \mathbb{R}^n: \frac{|\Omega(x)|}{|x|^{n-\alpha}}>\lambda\Big\}\Big|
  \leq
  (2/\lambda)^{\frac{n}{n-\alpha}}\|T_{\Omega}^{\alpha}\|_{L^1\rightarrow L^{\frac{n}{n-\alpha},\infty}},
\end{equation*}
which yields the desired conclusion (1). Then, conclusion (2) follows immediately from property (B)
mentioned in Section 3.

Now, we turn to the proof of conclusion (3).
For a fixed $\nu>0$, recalling that $T^{\alpha}_{\Omega}$ is boundedness from $L^1$ to $L^{\frac{n}{n-\alpha},\infty}$, we obtain that
\begin{equation*}
  \nu\lambda|\tilde{E}_{t}^{\nu\lambda,2}|^{\frac{n}{n-\alpha}}\lesssim V^2_t(\mathbb{R}^n)= \epsilon_t \rightarrow 0^+\ \text{as}\ t \rightarrow 0^+.
\end{equation*}

Set
\begin{equation*}
  \tilde{A}_{t}^{\lambda}:=\Big\{x\in \mathbb{R}^n:\, |T^{\alpha}_{\Omega}V_t(x)-\frac{|\Omega(x)|}{|x|^{n-\alpha}}|>\lambda\Big\},
\end{equation*}
\begin{equation*}
  \tilde{A}_{t}^{\lambda,1}:=\Big\{x\in \mathbb{R}^n: |T^{\alpha}_{\Omega}V^1_t(x)-\frac{|\Omega(x)|}{|x|^{n-\alpha}}|>\lambda\Big\},
\end{equation*}
and $\tilde{A}_{t,\rho}^{\lambda,1}:=\tilde{A}_{t}^{\lambda,1}\cap B^c(0,\rho)$.
Observe that
\begin{equation*}
  \begin{split}
    \tilde{A}_t^{\lambda}     \subset
    \tilde{A}_{t}^{(1-\nu)\lambda,1}\cup \tilde{E}_t^{\nu\lambda,2}
    \subset
    B(0,\rho)\cup \tilde{A}_{t,\rho}^{(1-\nu)\lambda,1}\cup \tilde{E}_t^{\nu\lambda,2}.
  \end{split}
\end{equation*}
Recalling $\tilde{G}_{t,\rho}^{\lambda,1}\rightarrow 0$ as $t\rightarrow 0^+$,
and recalling $\frac{\Omega(x)}{|x|^{\frac{n}{n-\alpha}}}\in L^{\frac{n}{n-\alpha},\infty}$ proved in conclusion (1),
we use (\ref{for proof, 5}) to deduce that
\begin{equation*}
  \begin{split}
    |\tilde{A}_{t,\rho}^{(1-\nu)\lambda,1}|
        \leq &
    \Big|\Big\{x\in B^c(0,\rho): \beta_t\frac{|\Omega(x)|}{|x|^{n-\alpha}}
    +
    \frac{2^{n+1-\alpha}}{|x|^{n-\alpha}}\int_{\mathbb{R}^n}|\Omega(x-y)-\Omega(x)|dV^1_t(y)>(1-\nu)\lambda\Big\}\Big|
    \\
    \leq &
    \Big|\Big\{x\in \mathbb{R}^n: \beta_t\frac{|\Omega(x)|}{|x|^{n-\alpha}}>(1-\nu)\lambda/2\Big\}\Big|
    \\
    + &
    \Big|\Big\{x\in B^c(0,\rho): \frac{2^{n+1-\alpha}}{|x|^{n-\alpha}}\int_{\mathbb{R}^n}|\Omega(x-y)-\Omega(x)|dV^1_t(y)>(1-\nu)\lambda/2\Big\}\Big|
    \\
    \leq &
    \left(\frac{2}{(1-\nu)\lambda}\beta_t\Big\|\frac{\Omega(\cdot)}{|\cdot|^{n-\alpha}}\Big\|_{L^{\frac{n}{n-\alpha},\infty}}\right)^{\frac{n}{n-\alpha}}
    +|\tilde{G}_{t,\rho}^{(1-\nu)\lambda/2^{n+1-\alpha},1}|
    \rightarrow 0\ \text{as}\ t\rightarrow 0^+.
  \end{split}
\end{equation*}
Recalling $\tilde{E}_t^{\nu\lambda,2}\rightarrow 0$ as $t\rightarrow 0^+$, we deduce that
\begin{equation*}
  \varlimsup_{t\rightarrow 0^+}|\tilde{A}_t^{\lambda}|
  \leq |B(0,\rho)|+\lim_{t\rightarrow 0^+}|\tilde{A}_{t,\rho}^{(1-\nu)\lambda,1}|+\lim_{t\rightarrow 0^+}|\tilde{E}_t^{\nu\lambda,2}|
  \leq |B(0,\rho)|,
\end{equation*}
which yields the desired conclusion by letting $\rho \rightarrow 0$.$\hfill\Box$

\medskip

{\it Proof of Theorem \ref{Theorem, fractional operator with homogeneous kernel, Lq-Dini}}.\quad Without loss of generality, we assume that $V$ is a probability measure. The conclusions (1) and (2) can be verified by the same method as in proof of Theorem
\ref{Theorem, fractional maximal with homogeneous kernel, Lq-Dini}.

To prove the conclusion (3), we employ the same notations $V_t^1,\,V_t^2$, $r_t$, $\epsilon_t$ for $t>0$, $\rho>0$, $\lambda>0$ as in the proof of Theorem \ref{Theorem, fractional operator with homogeneous kernel}.

Note that in the proof of Theorem \ref{Theorem, fractional operator with homogeneous kernel}, we have verified that
\begin{equation*}
  \begin{split}
    \left|T^{\alpha}_{\Omega}V^1_t(x)-\frac{\Omega(x)}{|x|^{n-\alpha}}\right|
    \leq
    \frac{2^{n-\alpha}}{|x|^{n-\alpha}}\left|\int_{\mathbb{R}^n}|\Omega(x-y)-\Omega(x)|dV^1_t(y)\right|
    +\frac{\beta_t|\Omega(x)|}{|x|^{n-\alpha}},
  \end{split}
\end{equation*}
where $\beta_t\rightarrow 0$ as $t\rightarrow 0^+$, for $x\in B^c(0,\rho)$.

Using the $L^{\frac{n}{n-\alpha}}$-Dini condition as in the proof of Theorem \ref{Theorem, fractional maximal with homogeneous kernel, Lq-Dini},
we obtain
\begin{equation*}
  \begin{split}
    \left\|\frac{1}{|\cdot|^{n-\alpha}}\int_{B(0,r_t)}|\Omega(\cdot-y)-\Omega(\cdot)|dV^1_t(y)\right\|_{L^{\frac{n}{n-\alpha},\infty}(\mathbb{R}^n\backslash B(0,\rho))}
       \rightarrow 0\ \text{as}\ t\rightarrow 0^+.
  \end{split}
\end{equation*}
Thus,
\begin{equation*}
  \begin{split}
    &\left\|T^{\alpha}_{\Omega}V^1_t(\cdot)-\frac{\Omega(\cdot)}{|\cdot|^{n-\alpha}}\right\|_{L^{\frac{n}{n-\alpha},\infty}(\mathbb{R}^n\backslash B(0,\rho))}
        \lesssim
    \beta_t\left\|\frac{|\Omega(\cdot)|}{|\cdot|^{n-\alpha}}\right\|_{L^{\frac{n}{n-\alpha},\infty}(\mathbb{R}^n)}
    \\
    &\qquad\qquad+
   \left\|\frac{2^{n-\alpha}}{|\cdot|^{n-\alpha}}\int_{B(0,r_t)}|\Omega(\cdot-y)-\Omega(\cdot)|dV^1_t(y)\right\|_{L^{\frac{n}{n-\alpha},\infty}(\mathbb{R}^n\backslash B(0,\rho))}\rightarrow 0\ \text{as}\ t\rightarrow 0^+.
  \end{split}
\end{equation*}
Recalling
\begin{equation*}
  \|T^{\alpha}_{\Omega}V^2_t(\cdot)\|_{L^{\frac{n}{n-\alpha},\infty}(\mathbb{R}^n)}\lesssim V_t^2(\mathbb{R}^n)=\epsilon_t\rightarrow 0\ \text{as}\ t\rightarrow 0^+,
\end{equation*}
we conclude that
\begin{equation*}
  \begin{split}
    \varlimsup_{t\rightarrow 0^+}
    \left\|T^{\alpha}_{\Omega}V_t(\cdot)-\frac{\Omega(\cdot)}{|\cdot|^{n-\alpha}}\right\|_{L^{\frac{n}{n-\alpha},\infty}(\mathbb{R}^n\backslash B(0,\rho))}
   &\lesssim
    \lim_{t\rightarrow 0^+}
    \left\|T^{\alpha}_{\Omega}V^1_t(\cdot)-\frac{\Omega(\cdot)}{|\cdot|^{n-\alpha}}\right\|_{L^{\frac{n}{n-\alpha},\infty}(\mathbb{R}^n\backslash B(0,\rho))}
    \\
    &\qquad\qquad+
    \lim_{t\rightarrow 0^+}\left\|T^{\alpha}_{\Omega}V^2_t(\cdot)\right\|_{L^{\frac{n}{n-\alpha},\infty}(\mathbb{R}^n)}=0,
  \end{split}
\end{equation*}
which completes the proof of (3). Theorem \ref{Theorem, fractional operator with homogeneous kernel, Lq-Dini} is proved. $\hfill\Box$

\medskip

{\it Proof of Corollary \ref{corollary, equivalent}.}\quad
  If $\Omega\in L^{\frac{n}{n-\alpha}}(\mathbb{S}^{n-1})$, we deduce
$\frac{\Omega(x)}{|x|^{n-\alpha}}\in L^{\frac{n}{n-\alpha},\infty}$
by Property (B). The conclusions (2) and (3) follow from
the weak Young's inequality (see Lemma \ref{Lemma, weak Young's inequality}) and the fact $M_{\Omega}^{\alpha}\leq T^{\alpha}_{|\Omega|}$.
The rest part of this proof follows directly from Theorems \ref{Theorem, fractional maximal with homogeneous kernel}
and \ref{Theorem, fractional operator with homogeneous kernel}. $\hfill\Box$

\medskip

By the similar method, one can also deal with the maximal truncated singular integral operator defined by
\begin{equation}
  T_{\Omega}^*(f)(x):=\sup_{\epsilon>0}\left|\int_{|x-y|>\epsilon}\frac{\Omega(x-y)}{|x-y|^n}f(y)dy\right|.
\end{equation}
We have the following two theorems.

\begin{theorem}\label{Theorem, maximal singular operator with homogeneous kernel}
Let $\alpha\in [0,n)$, $V$ be a absolutely continuous positive measure.
Suppose that $\Omega$ is a homogeneous function of degree zero and satisfies the $L^1_{\alpha}$-Dini condition.
If the maximal truncated singular integral operator $T_{\Omega}^{\ast}$
is bounded from $L^1$ to $L^{\frac{n}{n-\alpha},\infty}$, then
\begin{enumerate}
\item    $\frac{\Omega(x)}{|x|^{n-\alpha}}\in L^{\frac{n}{n-\alpha},\infty}$, $\Big\|\frac{\Omega(\cdot)}{|\cdot|^{n-\alpha}}\Big\|_{L^{\frac{n}{n-\alpha},\infty}}\lesssim \|T_{\Omega}^{\ast}\|_{L^1\rightarrow L^{\frac{n}{n-\alpha},\infty}}$;
\item    $\Omega\in L^{\frac{n}{n-\alpha}}(\mathbb{S}^{n-1})$, $\|\Omega \|_{L^{\frac{n}{n-\alpha}}(\mathbb{S}^{n-1})}\lesssim \|T_{\Omega}^{\ast}\|_{L^1\rightarrow L^{\frac{n}{n-\alpha},\infty}}$;
\item   $\displaystyle\lim_{t\rightarrow 0^{+}}\Big|\Big\{x\in \mathbb{R}^n: \Big|T^{\ast}_{\Omega}(V_t)(x)-\frac{|\Omega(x)|}{|x|^{\frac{n}{n-\alpha}}}V(\mathbb{R}^n)\Big|>\lambda\Big\}\Big|=0$, $\forall \lambda>0$.
\end{enumerate}
\end{theorem}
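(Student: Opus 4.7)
The plan is to mirror the proof of Theorem \ref{Theorem, fractional operator with homogeneous kernel} almost verbatim, using the decomposition $V_t = V_t^1 + V_t^2$ with $V_t^1$ supported on $B(0,r_t)$ and $V_t^2$ on $B^c(0,r_t)$, where $r_t = \sqrt{t}$. The contribution from $V_t^2$ is handled immediately by the $L^1 \to L^{n/(n-\alpha),\infty}$ boundedness of $T^*_{\Omega}$ together with $V_t^2(\mathbb{R}^n) = \epsilon_t \to 0^+$. The real work is to produce, for $x \in B^c(0,\rho)$ with $\rho > 2r_t$, a pointwise estimate of the form
\begin{equation*}
  \left| T^*_{\Omega} V_t^1(x) - \frac{|\Omega(x)|}{|x|^{n-\alpha}} \right|
  \lesssim \beta_t \frac{|\Omega(x)|}{|x|^{n-\alpha}}
  + \frac{1}{|x|^{n-\alpha}} \int_{B(0,r_t)} |\Omega(x-y) - \Omega(x)|\, dV_t^1(y),
\end{equation*}
with $\beta_t \to 0^+$ as $t \to 0^+$. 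Once this is in hand, the argument of Theorem \ref{Theorem, fractional operator with homogeneous kernel} transfers verbatim: the last term vanishes as $t \to 0^+$ by the $L^1_{\alpha}$-Dini condition (it appears precisely as the quantity $\tilde G_{t,\rho}^{\lambda,1}$ already estimated there), and the three conclusions follow by the same slicing and weak-type calculations.

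The key technical step is analyzing the supremum defining $T^*_{\Omega} V_t^1(x)$. Since $V_t^1$ is supported in $B(0,r_t)$ and $|x| > \rho > 2r_t$, the truncation region $\{|x-y| > \epsilon\}$ intersected with $B(0,r_t)$ is: the whole of $B(0,r_t)$ when $\epsilon \le |x|-r_t$, empty when $\epsilon \ge |x|+r_t$, and a proper subset otherwise. Choosing $\epsilon \le |x|-r_t$ yields the lower bound
\begin{equation*}
  T^*_{\Omega} V_t^1(x) \ge \left| \int \frac{\Omega(x-y)}{|x-y|^{n-\alpha}} dV_t^1(y) \right|
  \ge \frac{(1-\epsilon_t)|\Omega(x)|}{(|x|+r_t)^{n-\alpha}}
  - \frac{2^{n-\alpha}}{|x|^{n-\alpha}} \int |\Omega(x-y)-\Omega(x)|\, dV_t^1(y),
\end{equation*}
by adding and subtracting $\Omega(x)$ inside the integrand. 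For the upper bound, bounding the absolute value of any truncated integral by the non-truncated integral of $|\Omega(x-y)|/|x-y|^{n-\alpha}$ gives
\begin{equation*}
  T^*_{\Omega} V_t^1(x) \le \frac{|\Omega(x)|}{(|x|-r_t)^{n-\alpha}}
  + \frac{2^{n-\alpha}}{|x|^{n-\alpha}} \int |\Omega(x-y)-\Omega(x)|\, dV_t^1(y),
\end{equation*}
where $1/(|x|-r_t)^{n-\alpha} \le 2^{n-\alpha}/|x|^{n-\alpha}$ since $|x| > 2r_t$, and moreover $|x|^{n-\alpha}/(|x|-r_t)^{n-\alpha} \le \rho^{n-\alpha}/(\rho - r_t)^{n-\alpha}$ over $B^c(0,\rho)$. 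Combining the two bounds produces the pointwise estimate above with $\beta_t = \max\bigl(\rho^{n-\alpha}/(\rho-r_t)^{n-\alpha} - 1,\ 1 - (1-\epsilon_t)\rho^{n-\alpha}/(\rho+r_t)^{n-\alpha}\bigr)$.

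From that pointwise estimate, conclusions (1), (2), (3) follow as in the proof of Theorem \ref{Theorem, fractional operator with homogeneous kernel}. Conclusion (1): the lower bound on $T^*_{\Omega}V_t^1$ combined with the hypothesis $\|T^*_{\Omega}V_t\|_{L^{n/(n-\alpha),\infty}} \lesssim \|T^*_{\Omega}\|_{L^1\to L^{n/(n-\alpha),\infty}}$ and passing to the limit $t\to 0^+$ gives $\|\Omega(\cdot)/|\cdot|^{n-\alpha}\|_{L^{n/(n-\alpha),\infty}} \lesssim \|T^*_{\Omega}\|_{L^1\to L^{n/(n-\alpha),\infty}}$. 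Conclusion (2) follows from (1) and Property (B) of Section 3. Conclusion (3) comes from the quasi-triangle decomposition of $\tilde A_t^\lambda$ into $B(0,\rho)$, $\tilde A_{t,\rho}^{(1-\nu)\lambda,1}$, and $\tilde E_t^{\nu\lambda,2}$, each of whose measures vanishes as $t\to 0^+$ followed by $\rho\to 0$.

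The main obstacle, and the only genuine departure from the proof of Theorem \ref{Theorem, fractional operator with homogeneous kernel}, is justifying why the limit is $|\Omega(x)|/|x|^{n-\alpha}\cdot V(\mathbb{R}^n)$ (with absolute value) rather than $\Omega(x)/|x|^{n-\alpha}\cdot V(\mathbb{R}^n)$. This is dictated precisely by the outer absolute value inside the supremum in the definition of $T^*_{\Omega}$: both the trivial truncation $\epsilon = 0$ and the choice $\epsilon \ge |x|+r_t$ are admissible, so the supremum over $\epsilon$ automatically picks out the modulus of the principal value integral. A secondary care point is the range $|x|-r_t < \epsilon < |x|+r_t$, where the truncated integral is over a proper (annular-like) subset of $\mathrm{supp}(V_t^1)$; this is handled uniformly by the crude bound $|A_\epsilon| \le \int |\Omega(x-y)|/|x-y|^{n-\alpha}\, dV_t^1(y)$ used in the upper estimate, which is why no finer analysis of the intermediate regime is needed.
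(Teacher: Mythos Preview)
Your proposal is correct and follows essentially the same strategy as the paper's proof. The paper likewise reduces to the pointwise two-sided estimate
\[
\frac{(1-\epsilon_t)|\Omega(x)|}{|x|^{n}} - \int_{\mathbb{R}^n}\left|\frac{\Omega(x-y)}{|x-y|^{n}}-\frac{\Omega(x)}{|x|^{n}}\right|dV_t^1(y)
\;\le\; T_{\Omega}^*(V_t^1)(x) \;\le\;
\frac{|\Omega(x)|}{|x|^{n}} + \int_{\mathbb{R}^n}\left|\frac{\Omega(x-y)}{|x-y|^{n}}-\frac{\Omega(x)}{|x|^{n}}\right|dV_t^1(y),
\]
and then invokes the argument of Theorem \ref{Theorem, fractional operator with homogeneous kernel} verbatim. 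The only organizational difference is that the paper keeps the full kernel difference $\bigl|\tfrac{\Omega(x-y)}{|x-y|^{n}}-\tfrac{\Omega(x)}{|x|^{n}}\bigr|$ as a single error term, whereas you split off the radial part $|x-y|^{-(n-\alpha)}$ versus $|x|^{-(n-\alpha)}$ immediately and absorb it into the $\beta_t$ factor (as was done in the proof of Theorem \ref{Theorem, fractional maximal with homogeneous kernel}); either packaging leads to the same $\tilde G_{t,\rho}^{\lambda,1}$ estimate. Your explanation for why the limit is $|\Omega(x)|/|x|^{n-\alpha}$ rather than $\Omega(x)/|x|^{n-\alpha}$---namely that the outer absolute value in the supremum defining $T^*_\Omega$ forces this---is exactly the point, and matches the paper's lower-bound computation $\sup_{\epsilon>0}\bigl|\int_{|x-y|>\epsilon}\tfrac{\Omega(x)}{|x|^n}dV_t^1(y)\bigr|=\tfrac{|\Omega(x)|}{|x|^n}V_t^1(\mathbb{R}^n)$.
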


\begin{theorem}\label{Theorem, maximal singular operator with homogeneous kernel, Lq-Dini}
Let $\alpha\in [0,n)$, $V$ be a absolutely continuous positive measure.
Suppose that $\Omega$ is a homogeneous function of degree zero and satisfies the $L^{\frac{n}{n-\alpha}}$-Dini condition.
If the maximal truncated singular integral operator $T_{\Omega}^{\ast}$
is bounded from $L^1$ to $L^{\frac{n}{n-\alpha},\infty}$, we have
\begin{enumerate}
\item    $\frac{\Omega(x)}{|x|^{n-\alpha}}\in L^{\frac{n}{n-\alpha},\infty}$, $\Big\|\frac{\Omega(\cdot)}{|\cdot|^{n-\alpha}}\Big\|_{L^{\frac{n}{n-\alpha},\infty}}\lesssim \|T_{\Omega}^{\ast}\|_{L^1\rightarrow L^{\frac{n}{n-\alpha},\infty}}$;
\item    $\Omega\in L^{\frac{n}{n-\alpha}}(\mathbb{S}^{n-1})$, $\|\Omega \|_{L^{\frac{n}{n-\alpha}}(\mathbb{S}^{n-1})}\lesssim \|T_{\Omega}^{\ast}\|_{L^1\rightarrow L^{\frac{n}{n-\alpha},\infty}}$;
\item    $\displaystyle\lim_{t\rightarrow 0^{+}}\Big\|T^{\ast}_{\Omega}(V_t)-\frac{|\Omega(\cdot)|}{|\cdot|^{n-\alpha}}V(\mathbb{R}^n)\Big\|_{L^{\frac{n}{n-\alpha},\infty}(\mathbb{R}^n\backslash B(0,\rho))}=0$
    for every $\rho>0$.
\end{enumerate}
\end{theorem}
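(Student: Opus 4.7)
The plan is to parallel the proofs of Theorems~\ref{Theorem, fractional maximal with homogeneous kernel, Lq-Dini} and~\ref{Theorem, fractional operator with homogeneous kernel, Lq-Dini} by reducing $T_{\Omega}^{\ast}$ to its non-maximal relatives through a two-sided pointwise sandwich. Decompose $V_t=V_t^1+V_t^2$ with $V_t^1=V_t\chi_{B(0,r_t)}$, $r_t=\sqrt{t}$, as in the earlier proofs. For $x\in B^c(0,\rho)$ with $\rho>2r_t$, the support of $V_t^1$ lies in $B(0,r_t)$ and is separated from $x$, so letting $\epsilon\to 0^+$ in the defining supremum yields
\begin{equation*}
|T_{\Omega}^{\alpha}V_t^1(x)|\leq T_{\Omega}^{\ast}(V_t^1)(x)\leq \int_{\mathbb{R}^n}\frac{|\Omega(x-y)|}{|x-y|^{n-\alpha}}\,dV_t^1(y).
\end{equation*}

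I would then apply the identity~(\ref{for proof, 5}) both to the kernel $\Omega$ and to $|\Omega|$; the latter satisfies the same $L^{n/(n-\alpha)}$-Dini condition because $\bigl||\Omega(x-y)|-|\Omega(x)|\bigr|\leq|\Omega(x-y)-\Omega(x)|$. Combined with the reverse triangle inequality $\bigl||T_{\Omega}^{\alpha}V_t^1(x)|-\frac{|\Omega(x)|}{|x|^{n-\alpha}}\bigr|\leq\bigl|T_{\Omega}^{\alpha}V_t^1(x)-\frac{\Omega(x)}{|x|^{n-\alpha}}\bigr|$ and the trivial observation that if $a\leq b\leq c$ and $|a-d|,|c-d|\leq M$ then $|b-d|\leq M$, the sandwich upgrades to the key pointwise bound
\begin{equation*}
\Big|T_{\Omega}^{\ast}(V_t^1)(x)-\frac{|\Omega(x)|}{|x|^{n-\alpha}}\Big|\leq \frac{2^{n-\alpha}}{|x|^{n-\alpha}}\int_{B(0,r_t)}|\Omega(x-y)-\Omega(x)|\,dV_t^1(y)+\beta_t\frac{|\Omega(x)|}{|x|^{n-\alpha}},
\end{equation*}
with $\beta_t\to 0^+$ as $t\to 0^+$. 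This is precisely the pointwise estimate driving the proofs of Theorems~\ref{Theorem, fractional maximal with homogeneous kernel, Lq-Dini} and~\ref{Theorem, fractional operator with homogeneous kernel, Lq-Dini}.

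Conclusion~(3) then follows by taking $L^{n/(n-\alpha),\infty}(\mathbb{R}^n\backslash B(0,\rho))$ norms in the displayed bound: the Minkowski-plus-Dini computation already carried out in Theorem~\ref{Theorem, fractional maximal with homogeneous kernel, Lq-Dini} forces the first term to vanish, the second is controlled by $\beta_t\|\Omega(\cdot)/|\cdot|^{n-\alpha}\|_{L^{n/(n-\alpha),\infty}}$, and the $V_t^2$ piece is handled by the weak-type hypothesis since $\|T_{\Omega}^{\ast}(V_t^2)\|_{L^{n/(n-\alpha),\infty}}\lesssim V_t^2(\mathbb{R}^n)=\epsilon_t\to 0^+$. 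Conclusions~(1) and~(2) follow by rearranging the pointwise bound and using the subadditivity $T_{\Omega}^{\ast}(V_t^1)(x)\leq T_{\Omega}^{\ast}(V_t)(x)+T_{\Omega}^{\ast}(V_t^2)(x)$ to deduce that, modulo a set whose measure tends to $|B(0,\rho)|$, the level set $\{|\Omega(x)|/|x|^{n-\alpha}>\lambda\}$ is contained in a level set of $T_{\Omega}^{\ast}V_t$; the weak-type hypothesis then yields $\|\Omega(\cdot)/|\cdot|^{n-\alpha}\|_{L^{n/(n-\alpha),\infty}}\lesssim\|T_{\Omega}^{\ast}\|_{L^1\to L^{n/(n-\alpha),\infty}}$, after which Property~(B) of Section~3 converts this into the stated bound on $\|\Omega\|_{L^{n/(n-\alpha)}(\mathbb{S}^{n-1})}$, exactly as in the proof of Theorem~\ref{Theorem, fractional operator with homogeneous kernel}.

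The main obstacle is that $T_{\Omega}^{\ast}$ is a supremum of absolute values of truncated integrals, so there is no direct analogue of the identity~(\ref{for proof, 5}) available; the two-sided sandwich between $|T_{\Omega}^{\alpha}V_t^1(x)|$ and $\int|\Omega(x-y)||x-y|^{\alpha-n}\,dV_t^1(y)$ is the device that transports the pointwise estimates for the kernels $\Omega$ and $|\Omega|$ simultaneously to $T_{\Omega}^{\ast}$. Once this transfer is made, everything else is a routine transcription of the arguments already given, with the $L^{n/(n-\alpha)}$-Dini condition entering only through the Minkowski-type control of the error integral on $B^c(0,\rho)$.
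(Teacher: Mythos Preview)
Your proposal is correct and arrives at the same key pointwise estimate as the paper, but you reach it by a slightly different route. The paper handles the supremum over $\epsilon$ directly: inside each truncated integral it adds and subtracts the constant-in-$y$ kernel $\frac{\Omega(x)}{|x|^{n-\alpha}}$, obtaining
\[
\Bigl|\int_{|x-y|>\epsilon}\frac{\Omega(x-y)}{|x-y|^{n-\alpha}}\,dV_t^1(y)\Bigr|
\ \gtrless\ \Bigl|\int_{|x-y|>\epsilon}\frac{\Omega(x)}{|x|^{n-\alpha}}\,dV_t^1(y)\Bigr|
\ \mp\ \int_{\mathbb{R}^n}\Bigl|\frac{\Omega(x-y)}{|x-y|^{n-\alpha}}-\frac{\Omega(x)}{|x|^{n-\alpha}}\Bigr|\,dV_t^1(y),
\]
and then observes that the supremum of the middle term is exactly $(1-\epsilon_t)\frac{|\Omega(x)|}{|x|^{n-\alpha}}$ because the integrand is constant and $V_t^1$ is positive. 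Your device instead sandwiches $T_{\Omega}^{\ast}(V_t^1)$ between the two non-maximal quantities $|T_{\Omega}^{\alpha}V_t^1|$ and $T_{|\Omega|}^{\alpha}V_t^1$ and imports estimate~(\ref{for proof, 5}) for each. Both arguments produce the same bound on $\bigl|T_{\Omega}^{\ast}(V_t^1)(x)-\frac{|\Omega(x)|}{|x|^{n-\alpha}}\bigr|$ for $x\in B^c(0,\rho)$, after which the remainder of the proof (Minkowski plus the $L^{n/(n-\alpha)}$-Dini estimate, the $V_t^2$ piece via the weak-type hypothesis, and Property~(B)) is identical. The paper's route is marginally more self-contained, while yours has the virtue of reusing~(\ref{for proof, 5}) verbatim rather than re-deriving its analogue; neither has a real advantage.
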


\medskip

{\it Proofs of Theorems \ref{Theorem, maximal singular operator with homogeneous kernel} and \ref{Theorem, maximal singular operator with homogeneous kernel, Lq-Dini}}.\quad These two theorems can be proved by
the similar argument as in the proofs of Theorems \ref{Theorem, fractional operator with homogeneous kernel} and \ref{Theorem, fractional operator with homogeneous kernel, Lq-Dini}. Here, we only give the following key estimates:

  Without loss of generality, we assume that $V$ is a probability measure. For $t>0$, let $V_t^1,\,V_t^2$, $\epsilon_t$ be as in the proof of Theorem \ref{Theorem, fractional operator with homogeneous kernel}.
  For every $\epsilon>0$, $x\in B(0,\rho)$, we have
  \begin{equation*}
    \begin{split}
      \left|\int_{|x-y|>\epsilon}\frac{\Omega(x-y)}{|x-y|^n}dV_t^1(y)\right|
          &\leq
      \left|\int_{|x-y|>\epsilon}\frac{\Omega(x)}{|x|^n}dV_t^1(y)\right|
      +
      \int_{|x-y|>\epsilon}\left|\frac{\Omega(x-y)}{|x-y|^n}-\frac{\Omega(x)}{|x|^n}\right|dV_t^1(y)
      \\
      &\leq
      \frac{|\Omega(x)|}{|x|^n}
      +
      \int_{\mathbb{R}^n}\left|\frac{\Omega(x-y)}{|x-y|^n}-\frac{\Omega(x)}{|x|^n}\right|dV_t^1(y),
    \end{split}
  \end{equation*}
  and then
  \begin{equation*}
    |T_{\Omega}^*(V_t)(x)|
    \leq
    \frac{|\Omega(x)|}{|x|^n}
    +
    \int_{\mathbb{R}^n}\left|\frac{\Omega(x-y)}{|x-y|^n}-\frac{\Omega(x)}{|x|^n}\right|dV_t^1(y).
  \end{equation*}
Also,
  \begin{equation*}
    \begin{split}
      \left|\int_{|x-y|>\epsilon}\frac{\Omega(x-y)}{|x-y|^n}dV_t^1(y)\right|
           \geq &
      \left|\int_{|x-y|>\epsilon}\frac{\Omega(x)}{|x|^n}dV_t^1(y)\right|
      -
      \int_{|x-y|>\epsilon}\left|\frac{\Omega(x-y)}{|x-y|^n}-\frac{\Omega(x)}{|x|^n}\right|dV_t^1(y)
      \\
      \geq &
      \left|\int_{|x-y|>\epsilon}\frac{\Omega(x)}{|x|^n}dV_t^1(y)\right|
      -
      \int_{\mathbb{R}^n}\left|\frac{\Omega(x-y)}{|x-y|^n}-\frac{\Omega(x)}{|x|^n}\right|dV_t^1(y),
    \end{split}
  \end{equation*}
  and then
  \begin{equation*}
    \begin{split}
      |T_{\Omega}^*(V_t)(x)|
      \geq &
      \sup_{\epsilon>0}\left|\int_{|x-y|>\epsilon}\frac{\Omega(x)}{|x|^n}dV_t^1(y)\right|
      -
      \int_{\mathbb{R}^n}\left|\frac{\Omega(x-y)}{|x-y|^n}-\frac{\Omega(x)}{|x|^n}\right|dV_t^1(y)
      \\
      = &
      \frac{|\Omega(x)|}{|x|^n}\int_{\mathbb{R}^n}dV_t^1(y)
      -
      \int_{\mathbb{R}^n}\left|\frac{\Omega(x-y)}{|x-y|^n}-\frac{\Omega(x)}{|x|^n}\right|dV_t^1(y)
      \\
      = &
      \frac{(1-\epsilon_t)|\Omega(x)|}{|x|^n}
      -
      \int_{\mathbb{R}^n}\left|\frac{\Omega(x-y)}{|x-y|^n}-\frac{\Omega(x)}{|x|^n}\right|dV_t^1(y).
    \end{split}
  \end{equation*}
The other details are omitted here.$\hfill\Box$

\section{weak Young's inequality}
As we know, if $\Omega\in L^{\frac{n}{n-\alpha}}(\mathbb{S}^{n-1})$ for $\alpha\in (0,n)$, then the $L^1\rightarrow L^{\frac{n}{n-\alpha}}$ boundedness of $T_{\Omega}^{\alpha}$
can be deduced by the following weak Young' inequality (see \cite[Theorem 1.2.13]{Grafakos_Classical_2008}).
\begin{lemma}(cf. \cite{Grafakos_Classical_2008})\label{Lemma, weak Young's inequality}
  Let $1\leq p<\infty$ and $1<q,r<\infty$ satisfy
  \begin{equation*}
    1+\frac{1}{q}=\frac{1}{p}+\frac{1}{r}.
  \end{equation*}
  Then for $f\in L^p(\mathbb{R}^n)$, $g\in L^{r,\infty}(\mathbb{R}^n)$,
  \begin{equation*}
    \|f\ast g\|_{L^{q,\infty}}\lesssim \|g\|_{L^{r,\infty}}\|f\|_{L^p}.
  \end{equation*}
\end{lemma}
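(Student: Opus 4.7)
The plan is to prove this weak-type Young inequality by real-variable decomposition of $g$, which is the standard Marcinkiewicz-style argument adapted to the bilinear map $(f,g)\mapsto f\ast g$. The sublinear operator $f\mapsto f\ast g$ is not directly amenable to linear interpolation because $g\in L^{r,\infty}$ only, so I would pass through a level-set decomposition of $g$ depending on the threshold $\alpha>0$ at which we measure $f\ast g$.

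Fix $\alpha>0$ and a parameter $M>0$ to be chosen. Split
\begin{equation*}
g = g_M + g^M, \qquad g^M := g\,\chi_{\{|g|>M\}}, \qquad g_M := g\,\chi_{\{|g|\le M\}}.
\end{equation*}
The first step is to control each piece in a classical Lebesgue space via the distribution function $\mu_g(s)\le \|g\|_{L^{r,\infty}}^{r}s^{-r}$ of $g$. Using the layer-cake representation,
\begin{equation*}
\|g_M\|_{L^{p'}}^{p'} = p'\int_0^{M} s^{p'-1}\mu_g(s)\,ds, \qquad \|g^M\|_{L^{1}} = M\mu_g(M) + \int_M^{\infty} \mu_g(s)\,ds.
\end{equation*}
The scaling hypothesis $1+1/q = 1/p+1/r$ with $1<q,r<\infty$ forces $r<p'$ when $p>1$ (and $r=q$ when $p=1$), which is exactly what is needed for the first integral to converge at $s=M$ and the second to converge at $\infty$. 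Evaluating gives bounds of the form $\|g_M\|_{L^{p'}}\lesssim \|g\|_{L^{r,\infty}}^{r/p'}M^{1-r/p'}$ and $\|g^M\|_{L^{1}}\lesssim \|g\|_{L^{r,\infty}}^{r}M^{1-r}$.

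Next I would apply two endpoint Young/Hölder inequalities separately. For the bounded piece, Hölder gives $\|f\ast g_M\|_{L^{\infty}}\le \|f\|_{L^p}\|g_M\|_{L^{p'}}$, and for the large piece, the $L^1\to L^p$ Young inequality gives $\|f\ast g^M\|_{L^p}\le \|f\|_{L^p}\|g^M\|_{L^1}$. Now choose $M=M(\alpha)$ so that $\|f\ast g_M\|_{L^\infty}\le \alpha/2$; explicitly this forces $M^{1-r/p'}\sim \alpha/(\|f\|_{L^p}\|g\|_{L^{r,\infty}}^{r/p'})$, which can be solved for $M$ in closed form. Then
\begin{equation*}
|\{|f\ast g|>\alpha\}| \le |\{|f\ast g^M|>\alpha/2\}| \le \left(\frac{2}{\alpha}\right)^{p}\|f\ast g^M\|_{L^p}^{p},
\end{equation*}
into which I would substitute the bound for $\|g^M\|_{L^1}$ and the chosen value of $M$. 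A book-keeping computation, using the algebraic identity $1-r+(1-r/p')(-1)\cdot p/(\ldots)$ together with $1+1/q=1/p+1/r$, yields $\alpha^{q}|\{|f\ast g|>\alpha\}|\lesssim \|f\|_{L^p}^{q}\|g\|_{L^{r,\infty}}^{q}$, which is the desired weak-type estimate.

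The main obstacle is purely bookkeeping: choosing $M$ optimally and verifying that the exponents combine correctly into the target $L^{q,\infty}$ bound under the scaling relation $1+1/q=1/p+1/r$. One should also treat the limit case $p=1$ separately, where $r=q$ and one piece of the decomposition degenerates (the $L^{p'}=L^\infty$ estimate becomes simply $\|g_M\|_{L^\infty}\le M$), but the same split still works with a slightly simpler computation.
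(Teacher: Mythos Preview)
Your argument is correct and is precisely the standard proof of this weak Young inequality (the one given in Grafakos, Theorem~1.2.13). The paper does not supply its own proof of this lemma---it merely cites Grafakos---so your level-set decomposition of $g$ with the optimized cut-off $M=M(\alpha)$ matches the referenced approach exactly.
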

Suppose $\Omega\in L^{\frac{n}{n-\alpha}}(\mathbb{S}^{n-1})$, then $\frac{\Omega(x)}{|x|^{n-\alpha}}\in L^{\frac{n}{n-\alpha},\infty}(\mathbb{R}^n)$.
Take $g(x)=\frac{\Omega(x)}{|x|^{n-\alpha}}$ in Lemma \ref{Lemma, weak Young's inequality},
then the $L^1\rightarrow L^{\frac{n}{n-\alpha},\infty}$ boundedness of $T_{\Omega}^{\alpha}$ follows.
Inspired by Theorem \ref{Theorem, fractional operator with homogeneous kernel},
a natural question is: if we replace $\frac{\Omega(x)}{|x|^{n-\alpha}}$ by a more general function $g$ and keep the boundedness for
the map $T_g(V)(x):= \int_{\mathbb{R}^n}g(x-y)dV(y)$,
what can we say for $g$, and how about the limiting behavior of $T_g(V_t)$?

The main purpose of this section is to answer the above question. Firstly, we consider the case of that $g$ is a radial function.

\begin{theorem}\label{Theorem, weak Young, radial}
  Let $\alpha\in [0,n)$, $r\in (1,\infty)$, $V$ be a absolutely continuous positive measure.
  Suppose that $g$ is a positive radial function and is decreasing in the radial direction. If $T_g$
is bounded from $L^1$ to $L^{r,\infty}$, then
\begin{enumerate}
\item    $g\in L^{r,\infty}$, $\|g\|_{L^{r,\infty}}\lesssim \|T_g\|_{L^1\rightarrow L^{r,\infty}}$;
\item    $\displaystyle\varliminf_{t\rightarrow 0^{+}}|\{x\in \mathbb{R}^n: |T_gV_t(x)|>\lambda\}|\geq |\{x\in \mathbb{R}^n: |g(x)|V(\mathbb{R}^n)>\lambda\}|$, $\forall \lambda>0$;
\item    $\displaystyle\varlimsup_{t\rightarrow 0^{+}}|\{x\in \mathbb{R}^n: |T_gV_t(x)|>\lambda\}|\leq |\{x\in \mathbb{R}^n: |g(x)|V(\mathbb{R}^n)\geq\lambda\}|$, $\forall \lambda>0$.
\end{enumerate}
\end{theorem}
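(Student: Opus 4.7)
The plan is to follow the decomposition strategy used in the earlier proofs (Theorems \ref{theorem, fractional maximal operator, measure}, \ref{Theorem, fractional maximal with homogeneous kernel}, \ref{Theorem, fractional operator with homogeneous kernel}) of the paper. Without loss of generality assume $V$ is a probability measure, set $r_t=\sqrt{t}$, and decompose $V_t=V_t^1+V_t^2$ with $V_t^1=V_t\chi_{B(0,r_t)}$, so that $V_t^1(\mathbb{R}^n)=1-\epsilon_t$ and $V_t^2(\mathbb{R}^n)=\epsilon_t\to 0^+$. Writing $g(z)=\Phi(|z|)$ with $\Phi$ decreasing, for any $|y|\leq r_t$ we have $|x|-r_t\leq |x-y|\leq |x|+r_t$, and the radial monotonicity of $\Phi$ yields the key two-sided sandwich
\begin{equation*}
(1-\epsilon_t)\Phi(|x|+r_t)\;\leq\;T_gV_t^1(x)\;\leq\;(1-\epsilon_t)\Phi(|x|-r_t),\qquad |x|>r_t,
\end{equation*}
while the lower bound alone remains valid for every $x\in\mathbb{R}^n$. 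All three conclusions follow from this sandwich together with the hypothesis $\|T_g\mu\|_{L^{r,\infty}}\leq \|T_g\|_{L^1\to L^{r,\infty}}\mu(\mathbb{R}^n)$.

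For conclusion (1) I would introduce the generalized radial distribution $d(\mu):=\sup\{s\geq 0:\Phi(s)>\mu\}$, so that $|\{g>\mu\}|$ equals the volume of the ball of radius $d(\mu)$. The lower sandwich gives the inclusion
\begin{equation*}
\{x:|x|>r_t,\ (1-\epsilon_t)\Phi(|x|+r_t)>\lambda\}\;\subset\;\{T_gV_t>\lambda\},
\end{equation*}
and the $L^1\to L^{r,\infty}$ hypothesis bounds the right-hand side by $(\|T_g\|_{L^1\to L^{r,\infty}}/\lambda)^r$. Since $d(\mu)$ is decreasing with $\lim_{\mu\searrow\lambda}d(\mu)=d(\lambda)$, passing to $t\to 0^+$ converts the left-hand measure into $|\{g>\lambda\}|$ (finiteness is forced a posteriori by the right-hand bound applied to finite truncations), and taking the supremum over $\lambda$ delivers $\|g\|_{L^{r,\infty}}\lesssim \|T_g\|_{L^1\to L^{r,\infty}}$.

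For conclusion (2), fix $\mu>\lambda$; for $t$ small enough that $(1-\epsilon_t)\mu>\lambda$, the lower sandwich forces $B(0,d(\mu)-r_t)\subset\{T_gV_t>\lambda\}$, so $\varliminf_{t\to 0^+}|\{T_gV_t>\lambda\}|\geq|\{g>\mu\}|$; letting $\mu\searrow\lambda$ and using $\{g>\mu\}\uparrow\{g>\lambda\}$ gives (2). For (3) I use both halves of the sandwich: for any $\nu\in(0,\lambda)$,
\begin{equation*}
\{T_gV_t>\lambda\}\;\subset\;B(0,r_t)\cup\{|x|>r_t:(1-\epsilon_t)\Phi(|x|-r_t)>\lambda-\nu\}\cup\{T_gV_t^2>\nu\},
\end{equation*}
and the last set has measure $\leq(\|T_g\|\epsilon_t/\nu)^r\to 0$. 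The middle set is contained in $B(0,d((\lambda-\nu)/(1-\epsilon_t))+r_t)$; taking $\varlimsup_{t\to 0^+}$ with $d((\lambda-\nu)/(1-\epsilon_t))\to d(\lambda-\nu)$ and then letting $\nu\searrow 0$ with $\{g>\lambda-\nu\}\downarrow\{g\geq\lambda\}$ delivers (3).

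The main technical obstacle is that $\Phi$ is only required to be decreasing and may carry jump discontinuities, so one cannot pass to a pointwise limit in $\Phi(|x|\pm r_t)$. The argument must therefore be run at the level of super-level sets, via the one-sided monotone limits $\lim_{\mu\searrow\lambda}d(\mu)=d(\lambda)$ and $\lim_{\nu\searrow 0}|\{g>\lambda-\nu\}|=|\{g\geq\lambda\}|$; this is precisely the feature that forces the asymmetry between the strict inequality in (2) and the $\geq$ in (3).
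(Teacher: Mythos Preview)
Your proposal is correct and follows essentially the same route as the paper: the same splitting $V_t=V_t^1+V_t^2$ with $r_t=\sqrt t$, the same two-sided radial sandwich $(1-\epsilon_t)\Phi(|x|+r_t)\le T_gV_t^1(x)\le (1-\epsilon_t)\Phi(|x|-r_t)$, and the same passage to the limit at the level of super-level sets, with (1) read off from (2) and the $L^1\to L^{r,\infty}$ bound. The only cosmetic difference is that the paper replaces your additive shift $|x|\pm r_t$ by the multiplicative rescaling $x(\rho\pm r_t)/\rho$ on $B(0,\rho)^c$ (so that a change of variables with Jacobian $((\rho\pm r_t)/\rho)^n\to 1$ handles the limit) and then sends $\rho\to 0$, whereas you encode the same limit via the right-continuous radial distribution $d(\mu)=\sup\{s:\Phi(s)>\mu\}$; both devices serve exactly the same purpose.
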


\begin{proof}
Without loss of generality, we assume that $V(\mathbb{R}^n)=1$. For $t>0$, let $V_t^1,\,V_t^2$, $r_t$, $\epsilon_t$ be as before. For $\lambda>0$, denote
\begin{equation*}
  K_{t}^{\lambda}:=\{x\in \mathbb{R}^n:\, T_gV_t(x)>\lambda\},
\end{equation*}
and
\begin{equation*}
  K_{t}^{\lambda,1}:=\{x\in \mathbb{R}^n: |T_gV^1_t(x)|>\lambda\},\hspace{6mm}K_{t}^{\lambda,2}:=\{x\in \mathbb{R}^n: |T_gV^2_t(x)|>\lambda\}.
\end{equation*}
For a fixed $\nu>0$, recalling that $T_g$ is boundedness from $L^1$ to $L^{r,\infty}$, we obtain that
\begin{equation*}
  (\nu\lambda)|K_{t}^{\nu\lambda,2}|^{\frac{1}{r}}\lesssim V^2_t(\mathbb{R}^n)= \epsilon_t \rightarrow 0^+\ \text{as}\ t \rightarrow 0^+,
\end{equation*}
which implies $|K_{t}^{\nu\lambda,2}|\rightarrow 0$ as $t\rightarrow 0^+$.
For a fixed $\rho>0$, set
\begin{equation*}
  K_{t,\rho}^{\lambda,1}=K_{t}^{\lambda,1}\cap B^c(0,\rho).
\end{equation*}
Then
\begin{equation*}
  \begin{split}
    K_{t}^{\lambda}
    \supset
    K_{t}^{(1+\nu)\lambda,1}\backslash K_{t}^{\nu\lambda,2}
       \supset
    K_{t,\rho}^{(1+\nu)\lambda,1}\backslash K_{t}^{\nu\lambda,2}.
  \end{split}
\end{equation*}
For $\rho>2r_t$, $x\in B^c(0,\rho)$,
\begin{equation*}
  \begin{split}
    T_gV^1_t(x)
    &=    \int_{B(0,r_t)}g(x-y)dV^1_t(y)
        \geq
    \int_{B(0,r_t)}g(\frac{x(|x|+r_t)}{|x|})dV^1_t(y)
    \\
    &\geq
    \int_{B(0,r_t)}g(\frac{x(\rho+r_t)}{\rho})dV^1_t(y)
    =(1-\epsilon_t)g(\frac{x(\rho+r_t)}{\rho}),
  \end{split}
\end{equation*}
and
\begin{equation*}
  \begin{split}
    T_gV^1_t(x)
        &\leq
    \int_{B(0,r_t)}g(\frac{x(|x|-r_t)}{|x|})dV^1_t(y)
        \leq
    \int_{B(0,r_t)}g(\frac{x(\rho-r_t)}{\rho})dV^1_t(y)
    =(1-\epsilon_t)g(\frac{x(\rho-r_t)}{\rho}).
  \end{split}
\end{equation*}
Thus, for $\rho>2r_t$, $x\in B^c(0,\rho)$,
\begin{equation*}
  \begin{split}
    |T_gV^1_t(x)|
    \leq &
    (1-\epsilon_t)\max\Big\{\Big|g(\frac{x(\rho+r_t)}{\rho})\Big|,\,\Big|g(\frac{x(\rho-r_t)}{\rho})\Big|\Big\}
    \\
    = &
    (1-\epsilon_t)\Big|g(\frac{x(\rho-r_t)}{\rho})\Big|
  \end{split}
\end{equation*}
and
\begin{equation*}
  \begin{split}
    |T_gV^1_t(x)|
    \geq &
    (1-\epsilon_t)\min\Big\{\Big|g(\frac{x(\rho+r_t)}{\rho})\Big|,\,\Big|g(\frac{x(\rho-r_t)}{\rho})\Big|\Big\}
    \\
    = &
    (1-\epsilon_t)\Big|g(\frac{x(\rho+r_t)}{\rho})\Big|.
  \end{split}
\end{equation*}
Denote
\begin{equation}
  L_{t,\rho}^{\lambda}:=\Big\{x\in B^c(0,\rho): (1-\epsilon_t)\Big|g(\frac{x(\rho+r_t)}{\rho})\Big|>\lambda\Big\}.
\end{equation}
Observing $L_{t,\rho}^{\lambda}\subset K_{t,\rho}^{\lambda,1}$ and
recalling
$K_{t}^{\lambda} \supset K_{t,\rho}^{(1+\nu)\lambda,1}\backslash K_{t}^{\nu\lambda,2}$,
we deduce that
\begin{equation*}
  \begin{split}
    |K_{t}^{\lambda}|
    \geq
    |K_{t,\rho}^{(1+\nu)\lambda,1}|-|K_{t}^{\nu\lambda,2}|
    \geq
    |L_{t,\rho}^{(1+\nu)\lambda}|-|K_{t}^{\nu\lambda,2}|.
  \end{split}
\end{equation*}
Recalling $|K_{t}^{\nu\lambda,2}|\rightarrow 0$ as $t\rightarrow 0^+$, we have
\begin{equation*}
  \begin{split}
    \varliminf_{t\rightarrow 0^{+}}|K_{t}^{\lambda}|
    \geq &
    \varliminf_{t\rightarrow 0^{+}}|L_{t,\rho}^{(1+\nu)\lambda}|
     \geq
    \varliminf_{t\rightarrow 0^{+}}\Big|\Big\{x\in \mathbb{R}^n: (1-\epsilon_t)\Big|g(\frac{x(\rho+r_t)}{\rho})\Big|>(1+\nu)\lambda\Big\}\Big|-|B(0,\rho)|
    \\
    = &
    |\{x\in \mathbb{R}^n: |g(x)|>(1+\nu)\lambda\}|-|B(0,\rho)|.
  \end{split}
\end{equation*}
Letting $\nu\rightarrow 0$ and $\rho\rightarrow 0$, we obtain
\begin{equation}\label{Theorem, weak Young, radial, for proof 1}
  \varliminf_{t\rightarrow 0^{+}}|K_{t}^{\lambda}|\geq |\{x\in \mathbb{R}^n: |g(x)|>\lambda\}|.
\end{equation}
Recalling the definition of $K_t^{\lambda}$ and the boundedness of $T_g$, we obtain
\begin{equation*}
  \begin{split}
    \lambda|\{x\in \mathbb{R}^n: |g(x)|>\lambda\}|^{\frac{1}{r}}
    \leq
    \varliminf_{t\rightarrow 0^{+}}\lambda|K_{t}^{\lambda}|^{\frac{1}{r}}
       \leq
    \varliminf_{t\rightarrow 0^{+}}\|T_gV_t\|_{L^{r,\infty}}
     \leq
    \|T_g\|_{L^1\rightarrow L^{r,\infty}}.
  \end{split}
\end{equation*}
By the arbitrary of $\lambda$, we deduce
$$g\in L^{r,\infty},\quad {\rm and}\quad \|g\|_{L^{r,\infty}}\leq \|T_g\|_{L^1\rightarrow L^{r,\infty}},$$
which completes the proof of conclusion (1).

Set
\begin{equation*}
  \widetilde{L}_{t,\rho}^{\lambda}=\Big\{x\in B^c(0,\rho): (1-\epsilon_t)\Big|g(\frac{x(\rho-r_t)}{\rho})\Big|>\lambda\Big\}.
\end{equation*}
Observing $\widetilde{L}_{t,\rho}^{\lambda}\supset K_{t,\rho}^{\lambda,1}$
and $K_{t}^{\lambda} \subset K_{t,\rho}^{(1-\nu)\lambda,1}\cup K_{t}^{\nu\lambda,2}\cup B(0,\rho)$,
we deduce that
\begin{equation*}
  \begin{split}
    |K_{t}^{\lambda}|
    \leq
    |K_{t,\rho}^{(1-\nu)\lambda,1}|+|K_{t}^{\nu\lambda,2}|+|B(0,\rho)|
    \leq
    |\widetilde{L}_{t,\rho}^{(1-\nu)\lambda}|+|K_{t}^{\nu\lambda,2}|+|B(0,\rho)|.
  \end{split}
\end{equation*}
Recalling $|K_{t}^{\nu\lambda,2}|\rightarrow 0$ as $t\rightarrow 0^+$, we have
\begin{equation*}
  \begin{split}
    \varlimsup_{t\rightarrow 0^{+}}|K_{t}^{\lambda}|
    \leq &
    \varlimsup_{t\rightarrow 0^{+}}|\widetilde{L}_{t,\rho}^{(1-\nu)\lambda}|+|B(0,\rho)|
    \\
    \leq &
    \varlimsup_{t\rightarrow 0^{+}}\Big|\Big\{x\in \mathbb{R}^n: (1-\epsilon_t)\Big|g(\frac{x(\rho-r_t)}{\rho})\Big|>(1-\nu)\lambda\Big\}\Big|+|B(0,\rho)|
    \\
    = &
    |\{x\in \mathbb{R}^n: |g(x)|>(1-\nu)\lambda\}|+|B(0,\rho)|.
  \end{split}
\end{equation*}
Letting $\nu\rightarrow 0$ and $\rho\rightarrow 0$, we obtain
\begin{equation*}\label{Theorem, weak Young, radial, for proof 2}
  \varlimsup_{t\rightarrow 0^{+}}|K_{t}^{\lambda}|\leq |\{x\in \mathbb{R}^n: |g(x)|\geq\lambda\}|.
\end{equation*}
This together with (\ref{Theorem, weak Young, radial, for proof 1}) yields the conclusions
(2) and (3). Theorem \ref{Theorem, weak Young, radial} is proved.
\end{proof}

The following result can be deduced by weak Young's inequality (see Lemma \ref{Lemma, weak Young's inequality}) and Theorem \ref{Theorem, weak Young, radial}.

\begin{corollary}\label{Corollary, weak Young, radial}
  Let $r\in (1,\infty)$.
  Suppose that $g$ is a positive radial function which is decreasing in the radial direction.
  Then the following two statements are equivalent.
  \begin{enumerate}
  \item $g\in L^{r,\infty}$.
  \item $T_g$ is bounded from $L^1$ to $L^{r,\infty}$.
\end{enumerate}
\end{corollary}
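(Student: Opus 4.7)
The plan is to derive the equivalence by simply concatenating two ingredients already present in the paper: Lemma \ref{Lemma, weak Young's inequality} handles $(1)\Rightarrow(2)$, and conclusion (1) of Theorem \ref{Theorem, weak Young, radial} handles $(2)\Rightarrow(1)$. The author has already essentially telegraphed this structure in the sentence preceding the corollary, so the task reduces to recording both steps carefully.

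For $(1)\Rightarrow(2)$, I would assume $g\in L^{r,\infty}$ and invoke Lemma \ref{Lemma, weak Young's inequality} with $p=1$ and the weak-type index equal to the given $r$; the scaling relation $1+\frac{1}{q}=\frac{1}{p}+\frac{1}{r}$ then forces $q=r$. The lemma yields $\|f\ast g\|_{L^{r,\infty}}\lesssim \|g\|_{L^{r,\infty}}\|f\|_{L^1}$ for every $f\in L^1(\mathbb{R}^n)$. Since $T_g f(x)=\int_{\mathbb{R}^n} g(x-y)f(y)\,dy=(g\ast f)(x)$, this is exactly the desired $L^1\to L^{r,\infty}$ bound, with operator norm controlled by $\|g\|_{L^{r,\infty}}$.

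For $(2)\Rightarrow(1)$, I would observe that the hypotheses of the present corollary (positive, radial, radially decreasing $g$, and $T_g$ bounded from $L^1$ to $L^{r,\infty}$) are precisely the hypotheses of Theorem \ref{Theorem, weak Young, radial}. Conclusion (1) of that theorem reads verbatim $g\in L^{r,\infty}$ with $\|g\|_{L^{r,\infty}}\lesssim \|T_g\|_{L^1\to L^{r,\infty}}$, which is statement (1) of the corollary.

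There is essentially no obstacle: the deep step has been done in Theorem \ref{Theorem, weak Young, radial}, whose proof extracts weak-type information about the radial decreasing kernel $g$ from weak-type bounds on $T_g$ by comparing $T_g V_t$ with $g$ itself on $B^c(0,\rho)$ and letting $t\to 0^+$. The only conceptual point worth underlining is that the weak Young inequality, which in general is only a sufficient condition for $L^1\to L^{r,\infty}$ boundedness, becomes an exact characterization once one restricts attention to radial, radially decreasing kernels.
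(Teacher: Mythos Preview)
Your proof is correct and follows exactly the route the paper indicates: Lemma \ref{Lemma, weak Young's inequality} with $p=1$ (forcing $q=r$) gives $(1)\Rightarrow(2)$, and conclusion (1) of Theorem \ref{Theorem, weak Young, radial} gives $(2)\Rightarrow(1)$. The paper's own proof is precisely this two-line appeal to those two results.
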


If we drop the assumption of radial and deceasing, add some integrability, the limiting behavior
can be established as follows.

\begin{theorem}\label{Theorem, weak Young, L1}
  Let $r\in (1,\infty)$, $V$ be an absolutely continuous positive measure.
  If $T_g$ is bounded from $L^1$ to $L^{r,\infty}$ for some $g\in L^1(\mathbb{R}^n)$, then
\begin{enumerate}
\item    $g\in L^{r,\infty}$, $\|g\|_{L^{r,\infty}}\lesssim \|T_g\|_{L^1\rightarrow L^{r,\infty}}$;
\item    $\displaystyle\lim_{t\rightarrow 0^{+}}|\{x\in \mathbb{R}^n: |T_g(V_t)(x)-g(x)|>\lambda\}|=0$
    for every $\lambda>0$.
\end{enumerate}
\end{theorem}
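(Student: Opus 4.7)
The plan is to recognize the statement as an approximation-to-the-identity result in disguise. First I would normalize so that $V(\mathbb{R}^n)=1$, which is consistent with the formulation of (2) since that conclusion has no $V(\mathbb{R}^n)$ factor. Since $V$ is absolutely continuous, I can write $dV(y)=v(y)\,dy$ with $v\geq 0$ and $\int v=1$, and a change of variables gives the key identification
\[
T_g(V_t)(x)=\int_{\mathbb{R}^n}g(x-y)\,t^{-n}v(y/t)\,dy=(g\ast v_t)(x),
\]
where $v_t(y)=t^{-n}v(y/t)$ forms a classical $L^1$-approximate identity (indeed $\int v_t=1$ and the mass of $v_t$ concentrates at the origin as $t\to 0^+$).

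For conclusion (2) I would invoke the standard approximation-to-the-identity theorem in $L^1$: for any $g\in L^1(\mathbb{R}^n)$ one has $g\ast v_t\to g$ in $L^1(\mathbb{R}^n)$ as $t\to 0^+$. Since convergence in $L^1$ implies convergence in measure, this is exactly the assertion $|\{|T_g(V_t)-g|>\lambda\}|\to 0$ for every $\lambda>0$.

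For conclusion (1) I would exploit the lower semicontinuity of the weak $L^r$ quasi-norm under convergence in measure. Given $\lambda>0$ and $\epsilon\in(0,1)$, the inclusion
\[
\{|g|>\lambda\}\subset\{|g\ast v_t|>(1-\epsilon)\lambda\}\cup\{|g\ast v_t-g|>\epsilon\lambda\},
\]
together with (2), yields
\[
|\{|g|>\lambda\}|\leq\liminf_{t\to 0^+}|\{|g\ast v_t|>(1-\epsilon)\lambda\}|.
\]
Multiplying by $((1-\epsilon)\lambda)^r$, taking $r$-th roots, invoking the hypothesis $\|T_g(V_t)\|_{L^{r,\infty}}\leq\|T_g\|_{L^1\to L^{r,\infty}}$ (because $\|v_t\|_{L^1}=1$), and finally letting $\epsilon\to 0^+$, I would obtain $\lambda|\{|g|>\lambda\}|^{1/r}\leq\|T_g\|_{L^1\to L^{r,\infty}}$ for every $\lambda>0$. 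Taking the supremum over $\lambda>0$ then gives (1).

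I do not anticipate any serious technical obstacle: the whole theorem collapses once one recognises that $V_t$ corresponds to a bona fide $L^1$-approximate identity, at which point (2) is classical and (1) follows by a one-line semicontinuity argument. The only mildly delicate point is that $g$ carries no regularity beyond $g\in L^1$, so one must be careful to use only the basic $L^1$-convergence form of approximation to the identity, rather than any pointwise or uniform refinement; this is precisely what is available under the hypotheses, so no extra work is needed.
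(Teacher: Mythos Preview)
Your proposal is correct and is genuinely more streamlined than the paper's own argument. The paper does not invoke the classical approximation-to-the-identity theorem directly; instead it works by hand, using the same decomposition $V_t=V_t^1+V_t^2$ (with $V_t^1$ the restriction of $V_t$ to $B(0,\sqrt t)$) that drives all of the other theorems in the paper, then bounds $\big|\int (g(x-y)-g(x))\,dV_t^1(y)\big|$ via Chebyshev and the $L^1$-continuity of translations, and handles $T_gV_t^2$ by the assumed $L^1\to L^{r,\infty}$ bound. The paper also proves (1) first and (2) afterwards, whereas you reverse the order, obtaining (2) immediately from $g\ast v_t\to g$ in $L^1$ and then deducing (1) by the semicontinuity inclusion. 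What your route buys is brevity: once $T_g(V_t)=g\ast v_t$ with $v_t$ a bona fide $L^1$ approximate identity is recognised, the whole splitting apparatus becomes unnecessary for this particular theorem. What the paper's route buys is uniformity: its argument is deliberately parallel to those for $M_\Omega^\alpha$, $T_\Omega^\alpha$, and $T_\Omega^\ast$, where the kernel is not in $L^1$ and no off-the-shelf approximate-identity statement applies; in effect the paper re-derives the relevant piece of approximation to the identity within its general framework.
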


\begin{proof}
Without loss of generality, we assume that $V(\mathbb{R}^n)=1$. For $t>0$, $\lambda>0$, let $V_t^1,\,V_t^2$, $r_t$, $\epsilon_t$, $K_t^\lambda$, $K_t^{\lambda,1}$, $K_t^{\lambda,2}$ be as in the proof of Theorem \ref{Theorem, weak Young, radial}. Then
\begin{equation*}
  \begin{split}
    K_{t}^{\lambda}
    \supset &
    K_{t}^{(1+\nu)\lambda,1}\backslash K_{t}^{\nu\lambda,2}.
  \end{split}
\end{equation*}
For a fixed $\nu>0$, recalling that $T_g$ is boundedness from $L^1$ to $L^{r,\infty}$, we obtain that
\begin{equation*}
  (\nu\lambda)|K_{t}^{\nu\lambda,2}|^{\frac{1}{r}}\lesssim V^2_t(\mathbb{R}^n)= \epsilon_t \rightarrow 0^+\ \text{as}\ t \rightarrow 0^+.
\end{equation*}
For $\rho>2r_t$, $x\in B^c(0,\rho)$,
\begin{equation*}
  \begin{split}
    |T_gV^1_t(x)|
        = &
    \left|\int_{B(0,r_t)}g(x-y)dV^1_t(y)\right|
    \\
    = &
    \left|\int_{B(0,r_t)}g(x-y)-g(x)dV^1_t(y)+(1-\epsilon_t)g(x)\right|
    \\
    \geq &
    (1-\epsilon_t)|g(x)|
    -\left|\int_{B(0,r_t)}g(x-y)-g(x)dV^1_t(y)\right|.
  \end{split}
\end{equation*}
Set
\begin{equation*}
  R_{t}^{\lambda}=\{x\in \mathbb{R}^n:\, (1-\epsilon_t)|g(x)|>\lambda\}
\end{equation*}
and
\begin{equation*}
  S_{t}^{\lambda,1}=\Big\{x\in \mathbb{R}^n:\, \Big|\int_{B(0,r_t)}(g(x-y)-g(x))dV^1_t(y)\Big|>\lambda\Big\}.
\end{equation*}
Now, we estimate $|S_{t}^{\nu\lambda,1}|$ for fixed $\nu>0$.
Using Chebychev's inequality, we conclude that
\begin{equation*}
  \begin{split}
    |S_{t}^{\nu\lambda,1}|
    \leq &
    \frac{1}{\nu\lambda}\int_{\mathbb{R}^n}\Big|\int_{B(0,r_t)}(g(x-y)-g(x))dV^1_t(y)\Big|dx
    \\
    \leq &
    \frac{1}{\nu\lambda}\int_{B(0,r_t)}\int_{\mathbb{R}^n}|g(x-y)-g(x)|dx dV^1_t(y)
    \\
    \leq &
    \frac{1}{\nu\lambda}\sup_{y\in B(0,r_t)}\int_{\mathbb{R}^n}|g(x-y)-g(x)|dx\rightarrow 0\ \text{as}\ t\rightarrow 0^+,
  \end{split}
\end{equation*}
where we use the average continuous of for $g\in L^1$.

Observing $K_{t}^{(1+\nu)\lambda,1}\supset R_{t}^{(1+2\nu)\lambda}\backslash S_{t}^{\nu\lambda,1}$ and recalling
$K_{t}^{\lambda} \supset K_{t}^{(1+\nu)\lambda,1}\backslash K_{t}^{\nu\lambda,2}$, we deduce that
\begin{equation*}
  \begin{split}
    |K_{t}^{\lambda}|
    \geq
    |K_{t}^{(1+\nu)\lambda,1}|-|K_{t}^{\nu\lambda,2}|
    \geq
    |R_{t}^{(1+2\nu)\lambda}|-|S_{t}^{\nu\lambda,1}|-|K_{t}^{\nu\lambda,2}|.
  \end{split}
\end{equation*}
Recalling $|K_{t}^{\nu\lambda,2}|$, $|S_{t}^{\nu\lambda,1}|\rightarrow 0$ as $t\rightarrow 0^+$, we have
\begin{equation*}
  \begin{split}
    \varliminf_{t\rightarrow 0^{+}}|K_{t}^{\lambda}|
    \geq &
    \varliminf_{t\rightarrow 0^{+}}|R_{t}^{(1+2\nu)\lambda}|
        =
    \varliminf_{t\rightarrow 0^{+}}|\{x\in \mathbb{R}^n: (1-\epsilon_t)|g(x)|>(1+2\nu)\lambda)\}|
    \\
    = &
    |\{x\in \mathbb{R}^n: |g(x)|>(1+2\nu)\lambda)\}|.
  \end{split}
\end{equation*}
Letting $\nu\rightarrow 0$, we obtain
\begin{equation}
  \varliminf_{t\rightarrow 0^{+}}|K_{t}^{\lambda}|\geq |\{x\in \mathbb{R}^n: |g(x)|>\lambda\}|.
\end{equation}
Recalling the definition of $K_t^{\lambda}$ and the boundedness of $T_g$, we obtain
\begin{equation*}
  \begin{split}
    \lambda|\{x\in \mathbb{R}^n: |g(x)|>\lambda\}|^{\frac{1}{r}}
    \leq
    \varliminf_{t\rightarrow 0^{+}}\lambda|K_{t}^{\lambda}|^{\frac{1}{r}}
    =
    \varliminf_{t\rightarrow 0^{+}}\|T_gV_t\|_{L^{r,\infty}}
        \leq
    \|T_g\|_{L^1\rightarrow L^{r,\infty}}.
  \end{split}
\end{equation*}
Since $\lambda>0$ is arbitrary, we deduce
$g\in L^{r,\infty}$
and $\|g\|_{L^{r,\infty}}\leq \|T_g\|_{L^1\rightarrow L^{r,\infty}}$, which complets the proof of conclusion (1).

Next, we turn to verify conclusion (2). For $x\in \mathbb{R}^n$, we have
\begin{equation*}
  \begin{split}
    |T_gV^1_t(x)-g(x)|
    = &
    \left|\int_{B(0,r_t)}(g(x-y)-g(x))dV^1_t(y)-\epsilon_tg(x)\right|
    \\
    \leq &
    \epsilon_t|g(x)|
    +\left|\int_{B(0,r_t)}(g(x-y)-g(x))dV^1_t(y)\right|
  \end{split}
\end{equation*}
Denote
\begin{equation*}
  Q_{t}^{\lambda}=\{x\in \mathbb{R}^n: |T_gV_t(x)-g(x)|>\lambda\},
\end{equation*}
\begin{equation*}
  Q_{t}^{\lambda,1}=\{x\in \mathbb{R}^n: |T_gV^1_t(x)-g(x)|>\lambda\}.
\end{equation*}
Then
\begin{equation*}
  \begin{split}
   Q_t^{\lambda}
    \subset
    Q_{t}^{(1-\nu)\lambda,1}\cup K_t^{\nu\lambda,2}.
  \end{split}
\end{equation*}
A direct calculation yields that
\begin{equation*}
  \begin{split}
    |Q_{t}^{(1-\nu)\lambda,1}|
      & \leq
    |\{x\in \mathbb{R}^n: |T_gV^1_t(x)-g(x)|>(1-\nu)\lambda\}|
    \\
    &\leq
    |\{x\in \mathbb{R}^n: \epsilon_t|g(x)|>(1-\nu)\lambda/2\}|
    \\
    &\ \ \ \ +
    \Big|\Big\{x\in \mathbb{R}^n:\, \Big|\int_{B(0,r_t)}g(x-y)-g(x)dV^1_t(y)\Big|>(1-\nu)\lambda/2\Big\}\Big|
    \\
    &\leq
    \left(\frac{2}{(1-\nu)\lambda}\epsilon_t\|g\|_{L^{r,\infty}}\right)^r
    +|S_{t}^{(1-\nu)\lambda/2,1}|
    \rightarrow 0\ \text{as}\ t\rightarrow 0^+.
  \end{split}
\end{equation*}
Recalling $|K_t^{\nu\lambda,2}|\rightarrow 0$ as $t\rightarrow 0^+$, we deduce that
\begin{equation*}
  \varlimsup_{t\rightarrow 0^+}|Q_t^{\lambda}|
  \leq
  \varlimsup_{t\rightarrow 0^+}|Q_{t}^{(1-\nu)\lambda,1}|
  +\varlimsup_{t\rightarrow 0^+}|K_t^{\nu\lambda,2}|=0,
\end{equation*}
which is the desired conclusion.
\end{proof}

\begin{remark}
  Note that Proposition \ref{proposition, comparison for limiting behavior} does not work in the case of Theorem \ref{Theorem, weak Young, L1},
  since the set $\{x\in \mathbb{R}^n: |g(x)|=\lambda\}$ may has positive measure for some $\lambda>0$.
  Thus, we can not deduce type-3 convergence from the conclusion of type-2 convergence obtained in Theorem \ref{Theorem, weak Young, L1}.
\end{remark}

\begin{corollary}\label{Corollary, weak Young, L1}
  Let $r\in (1,\infty)$.
  Suppose $g\in L^1(\mathbb{R}^n)$. Then the following two statements are equivalent.
  \begin{enumerate}
  \item $g\in L^{r,\infty}$.
  \item $T_g$ is bounded from $L^1$ to $L^{r,\infty}$.
\end{enumerate}
\end{corollary}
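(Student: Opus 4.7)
The plan is to establish the two implications separately using results already at our disposal, with no new machinery required.

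For the direction $(1) \Rightarrow (2)$, I would invoke the weak Young's inequality (Lemma \ref{Lemma, weak Young's inequality}) directly. Taking the exponents $p=1$, $r$ in the lemma equal to our $r$, and $q=r$, the scaling relation $1 + 1/q = 1/p + 1/r$ reduces to the tautology $1 + 1/r = 1 + 1/r$. Hence for any $f \in L^1(\mathbb{R}^n)$ with $dV = f\,dx$ (and by approximation, for general finite absolutely continuous measures), we have
\begin{equation*}
\|T_g(V)\|_{L^{r,\infty}} = \|f \ast g\|_{L^{r,\infty}} \lesssim \|g\|_{L^{r,\infty}} \|f\|_{L^1},
\end{equation*}
which is exactly the $L^1 \to L^{r,\infty}$ boundedness of $T_g$.

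For the converse $(2) \Rightarrow (1)$, I would simply apply conclusion (1) of Theorem \ref{Theorem, weak Young, L1}. Since $g \in L^1(\mathbb{R}^n)$ is assumed, the hypothesis of that theorem is met, and its conclusion (1) states precisely that $g \in L^{r,\infty}$ together with the quantitative bound $\|g\|_{L^{r,\infty}} \lesssim \|T_g\|_{L^1 \to L^{r,\infty}}$.

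Combining the two directions yields the equivalence. There is essentially no obstacle here; the hard work was already carried out in Theorem \ref{Theorem, weak Young, L1}, whose proof of part (1) used the concentration argument with $V_t^1, V_t^2$ to reduce $T_g(V_t)$ to something that dominates $(1-\epsilon_t)|g(x)|$ pointwise outside a small exceptional set, and then exploited the $L^1 \to L^{r,\infty}$ bound on $T_g V_t$ to transfer this into the weak-type bound for $g$ itself. The only thing to verify in writing up the corollary is that the application of weak Young's inequality in the first direction covers all absolutely continuous $V$ encountered in the operator $T_g$, but this is immediate by writing $dV = f\,dx$ with $f \in L^1$.
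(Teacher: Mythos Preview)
Your proposal is correct and matches the paper's own proof exactly: the paper simply states that the corollary follows from the weak Young's inequality (Lemma \ref{Lemma, weak Young's inequality}) for $(1)\Rightarrow(2)$ and from Theorem \ref{Theorem, weak Young, L1} for $(2)\Rightarrow(1)$, with details omitted. One minor remark: a finite absolutely continuous measure is already of the form $dV=f\,dx$ with $f\in L^1$ by Radon--Nikodym, so no approximation is needed there.
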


This corollary follows immediately from the weak Young's inequality (see Lemma \ref{Lemma, weak Young's inequality}) and Theorem \ref{Theorem, weak Young, L1}. We omit the details here.

\end{document}